\documentclass[12pt,reqno]{amsart}

\usepackage{amssymb, xcolor}
\usepackage{amscd}
\usepackage{amsfonts}
\usepackage{setspace}
\usepackage{version}
\usepackage{tikz}
\usepackage{caption}
\usepackage{hyperref}
\usepackage{comment}
\usepackage{dsfont}

\usepackage{graphicx}
\usepackage{subcaption}

\newtheorem{theorem}{Theorem}[section]
\newtheorem{proposition}[theorem]{Proposition}
\newtheorem{remark}[theorem]{Remark}
\newtheorem{problem}[theorem]{Problem}
\newtheorem{lemma}[theorem]{Lemma}

\hypersetup{
    colorlinks=true,
    linkcolor=blue,
    filecolor=magenta,      
    urlcolor=blue,
    pdftitle={Overleaf Example},
    pdfpagemode=FullScreen,
    }

\theoremstyle{definition}
\renewcommand{\leq}{\leqslant}
\renewcommand{\geq}{\geqslant}

\numberwithin{equation}{section}
\usepackage{amsmath, amssymb, amsthm, mathtools}
\usepackage[a4paper,margin=1in]{geometry}

\usepackage{color}
\usepackage{amssymb}

\begin{document}

\title{The smoothest average and some extremal problems for polynomials}
\author[J. Gait\'an]{Jos\'e Gait\'an}
\address[JG]{Department of Mathematics, Virginia Polytechnic Institute and State University,  225 Stanger Street, Blacksburg, VA 24061-1026, USA
}
\email{jogaitan@vt.edu}

\author[C. Garz\'on]{Carlos Garz\'on}
\address[CG]{Department of Mathematics, Virginia Polytechnic Institute and State University,  225 Stanger Street, Blacksburg, VA 24061-1026, USA
}
\email{cgarzongu@vt.edu}

\author[J. Madrid]{Jos\'e Madrid}
\address[JM]{Department of Mathematics, Virginia Polytechnic Institute and State University,  225 Stanger Street, Blacksburg, VA 24061-1026, USA
}
\email{josemadrid@vt.edu}

\subjclass[2020]{33E05, 39A12, 42A38, 65D10.}
\keywords{Averaging operator, Chebyshev polynomials, Zolotarev polynomials, Elliptic functions.}

\date{}

\begin{abstract}
    We study the problem of finding the ``smoothest'' local average of a function $f \in \ell^2(\mathbb{Z})$ when we consider its convolution with suitable kernels $u$. The measurement of smoothness is as follows: Given a positive integer $k$, we aim to minimize the constant
    \begin{equation*}
        \sup_{0 \neq f \in \ell^2(\mathbb{Z})} \frac{\|\nabla^{k}(u\ast f)\|_{\ell^2(\mathbb{Z})}}{\|f\|_{\ell^2(\mathbb{Z})}}
    \end{equation*}
    among all symmetric kernels $u : \{-n,\dots,n\} \to \mathbb{R}$ with normalization $\sum_{j=-n}^{n}u(j) = 1$. We are also interested in finding the kernel for which the least constant is attained. For $k=1$ and $k=2$, the sharp constants and optimal kernels were already known \cite{KravitzSteinerberger2021,Richardson2026}. In this paper, we provide alternative proofs for $k\in \{1,2\}$ by using complex analysis tools. Moreover, we establish the case $k=3$, and also the cases $k\in \{4,6\}$ when the kernels are restricted to have non-negative Fourier transform. These are the first results in the literature for $k>2$. Finally, we deduce a general relation between the sharp constants and optimal kernels corresponding to $\nabla^k$ and $\nabla^{2k}$.
\end{abstract}

\maketitle

\section{Introduction}

Given a real-valued function $f$ defined on $\mathbb{R}^d$, a problem arising in some applications is that of finding the best way to average $f$ over a given scale. Depending on the interpretation, this question may have different answers. A common approach is to average $f$ by convolving it with a fixed kernel $u$, and then the problem becomes: What properties should we ask $u$ to have? What is the best choice of $u$ satisfying these properties?

Steinerberger \cite{Steinerberger2021} proposed to require that the
convolution at a certain scale should be as smooth as possible, and he suggested to measure the smoothness of a function by the $L^2$-norm of its derivative. As a result, the aim was to find the non-negative, radial function $u : \mathbb{R}^d \to \mathbb{R}$ with normalized $L^1$-mass and a normalized moment (i.e. $\int_{\mathbb{R}^d} u(x) dx$ and $\int_{\mathbb{R}^d} |x|^{\alpha} u(x) dx$ would be fixed) which minimizes the constant
\begin{equation*}
    C_u = \sup_{0 \neq f \in L^2(\mathbb{R}^d)} \frac{\|\nabla(u\ast f)\|_{L^2(\mathbb{R}^d)}}{\|f\|_{L^2(\mathbb{R}^d)}}.
\end{equation*}
Applying the Fourier transform in $\mathbb{R}^d$ together with Plancherel's theorem, we can see that this problem is equivalent to minimizing $\|\xi \cdot \widehat{u}(\xi)\|_{L^{\infty}(\mathbb{R}^d)}$ for such family of kernels $u$. This question led to new uncertainty principles for the Fourier transform in $\mathbb{R}$.

A discrete analogue to this problem was studied by Kravitz and Steinerberger in \cite{KravitzSteinerberger2021}. In order to restrict the scale of the average in this context, they bounded the support of $u$ instead of fixing its moment. More precisely, they were interested in finding the symmetric function $u : \{-n,\dots,n\} \to \mathbb{R}$ with normalization $\sum_{k=-n}^{n} u(k) = 1$ which minimizes the constant
\begin{equation} \label{Const:Cu}
    C_u = \sup_{0 \neq f \in \ell^2(\mathbb{Z})} \frac{\|\nabla(u\ast f)\|_{\ell^2(\mathbb{Z})}}{\|f\|_{\ell^2(\mathbb{Z})}}, 
\end{equation}
where $\nabla$ denotes the discrete derivative (which is given by $\nabla g(k) = g(k+1) - g(k)$). Similarly to the approach in the continuous setting, an application of the Fourier transform shows that this problem is equivalent to minimizing $\|(e^{i\xi}-1) \widehat{u}(\xi)\|_{L^{\infty}(\mathbb{T})}$ for this collection of kernels. Kravitz and Steinerberger \cite[Theorem 1]{KravitzSteinerberger2021} established that the constant function $u(k) = 1/(2n+1)$ is the only kernel satisfying the desired conditions that minimizes \eqref{Const:Cu}, and in such a case $C_u = 2/(2n+1)$. Their proof consists in reducing the problem to minimizing the $L^{\infty}$-norm of a class of polynomials on $[-1,1]$\footnote{It is also equivalent to minimizing $\|(1-x)^{1/2}p(x)\|_{L^{\infty}([-1,1])}$ over all polynomials $p$ of degree at most $n$ such that $p(1)=1$. It can be shown that this $L^{\infty}$-norm attains its minimum only at the normalized $n$-th Chebyshev polynomial of the fourth kind $\frac{1}{2n+1}W_n(x)$.}, and then adapting the argument that shows that the $n$-th Chebyshev polynomial minimizes $\max_{x \in [-1,1]}|p(x)|$ among all monic polynomials $p$ of degree $n$. 

Our first result in this paper provides a different proof for the sharp inequality $C_u \geq 2/(2n+1)$ when $u$ is assumed to be a ``desirable" kernel. The main difference is that we prove this result by applying tools from complex analysis, avoiding the use of Chebyshev polynomials. More precisely, we use a refined version of Bernstein's inequality for complex polynomials with certain symmetry in their coefficients (see Lemma \ref{LemmaSelfInversePol}).

\begin{proposition}\label{Thm:FirstDerivative}
    Let $u : \{-n,\dots,n\} \to \mathbb{R}$ be a symmetric function with normalization $\sum_{k=-n}^{n} u(k) = 1$. Then we have the inequality
    \begin{equation}\label{Constant:Sharp1NoRestrict}
        \sup_{0\neq f \in \ell^2(\mathbb{Z})} \frac{\|\nabla(u \ast f)\|_{\ell^2(\mathbb{Z})}}{\|f\|_{\ell^2(\mathbb{Z})}} \geq \frac{2}{2n+1}.
    \end{equation}
\end{proposition}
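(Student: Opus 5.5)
Via Plancherel on $\mathbb{T}$, I reduce to a polynomial problem and then apply a Bernstein-type inequality. Convolution with $u$ and $\nabla$ are Fourier multipliers. With $\widehat{u}(\xi)=\sum_{j=-n}^n u(j)e^{ij\xi}$, Plancherel gives
\begin{equation*}
\sup_{0\neq f}\frac{\|\nabla(u\ast f)\|_{\ell^2(\mathbb{Z})}}{\|f\|_{\ell^2(\mathbb{Z})}}=\max_{\xi\in\mathbb{T}}|e^{i\xi}-1|\,|\widehat{u}(\xi)|.
\end{equation*}
Set $P(z)=z^n\sum_{j=-n}^n u(j)z^j$, a polynomial of degree at most $2n$. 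Because $u$ is real and symmetric, $P$ is self-inversive: $z^{2n}\overline{P(1/\bar z)}=P(z)$. The normalization reads $P(1)=1$. Consider the polynomial $Q(z)=(z-1)P(z)$, of degree at most $2n+1$. On $|z|=1$ we have $|Q(z)|=|e^{i\xi}-1||\widehat u(\xi)|$. The goal therefore becomes
\begin{equation*}
M:=\max_{|z|=1}|Q(z)|\geq \frac{2}{2n+1}.
\end{equation*}

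The key observation is that $Q(1)=0$ and $Q'(1)=P(1)=1$. Bernstein's inequality for $Q$, of degree $2n+1$, gives $1=|Q'(1)|\le (2n+1)M$, i.e. $M\ge 1/(2n+1)$. This is off by a factor of $2$, and the self-inversive structure is what recovers that factor. The polynomial $Q$ is anti-self-inversive: $z^{2n+1}\overline{Q(1/\bar z)}=-Q(z)$. For polynomials $R$ of degree $m$ satisfying $|R^*(z)|=|R(z)|$ on the circle, where $R^*(z)=z^m\overline{R(1/\bar z)}$, the refined Bernstein inequality gives $\max_{|z|=1}|R'|\le \frac m2\max_{|z|=1}|R|$. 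I expect this is the content of Lemma \ref{LemmaSelfInversePol}. Applying it with $R=Q$ and $m=2n+1$ yields
\begin{equation*}
1=|Q'(1)|\le \frac{2n+1}{2}M,
\end{equation*}
which is exactly \eqref{Constant:Sharp1NoRestrict}.

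The main obstacle is checking that the hypotheses of Lemma \ref{LemmaSelfInversePol} fit. That lemma is presumably stated for self-inversive polynomials, whereas $Q$ is anti-self-inversive. This can be handled by replacing $Q$ with $iQ$: we have $(iQ)^*=\bar{i}\,Q^*=(-i)(-Q)=iQ$, so $iQ$ is self-inversive. The same algebra shows that $Q(z)-cQ^*(z)$ would also do. The modulus and the derivative modulus at $1$ are unchanged by this replacement.

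If the lemma instead requires exact degree $2n+1$, there are two cases. When $u(n)\neq 0$ there is nothing to check. Otherwise the self-inversive relation still holds with formal degree $2n+1$, and the standard proof of the refined inequality works with the formal degree. That proof goes through Bernstein applied to $Q+\lambda Q^*$, or through $|Q'|+|Q^{*\prime}|\le m\max|Q|$ together with $|Q^{*\prime}(z)|=|mQ(z)-zQ'(z)|$. If needed I would reprove it directly. At $z=1$, using $Q(1)=0$ and $Q^*=-Q$, we get $|Q^{*\prime}(1)|=|Q'(1)|$. Then $2|Q'(1)|\le (2n+1)M$ follows from the inequality $|Q'(z)|+|Q^{*\prime}(z)|\le m\max_{|z|=1}|Q|$, valid for any polynomial of degree $\le m$.
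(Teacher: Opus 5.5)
Your proposal is correct and is essentially the paper's proof: your $Q=(z-1)P$ is exactly the paper's polynomial built from the differences $b_k$, and the paper likewise combines the anti-self-inversive identity $Q^*=-Q$ (with $N=2n+1$), Lemma \ref{LemmaSelfInversePol}, and $Q'(1)=1$, which your factorization makes immediate. One slip to fix: the hypothesis ``$|R^*(z)|=|R(z)|$ on the circle'' holds for every polynomial, so it is vacuous (e.g.\ $R=z^m$ satisfies it, yet $\max|R'|=m>\frac m2\max|R|$); the refined Bernstein inequality must instead assume the identity $R^*\equiv e^{i\gamma}R$, which is what you actually verify for $Q$, and since Lemma \ref{LemmaSelfInversePol} allows any $\gamma$ and degree at most $N$, your $iQ$ and formal-degree workarounds are not needed.
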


Kravitz and Steinerberger \cite[Theorem 2]{KravitzSteinerberger2021} also considered the case when the smoothness of a function is measured with the $\ell^2$-norm of its discrete second derivative, which is given by $\nabla^2 g(k) = \nabla(\nabla g)(k)$. They found the sharp constant for this case when the kernel $u$ is assumed to have non-negative Fourier transform, and their proof is based once again on Chebyshev polynomials. We also present in this paper a different proof of this optimal inequality, which relies on complex analysis methods. Namely, we apply both Fej\'er-Riesz theorem (see Lemma \ref{Lemma:FejerRiesz}) and Erd\"os-Lax theorem (see Lemma \ref{Lemma:ErdosLax}) in our argument.

\begin{proposition}\label{Thm:SecondDerivative}
    Let $u : \{-n,\dots,n\} \to \mathbb{R}$ be a symmetric function with normalization $\sum_{k=-n}^{n} u(k) = 1$ and non-negative Fourier transform. Then we have the inequality
    \begin{equation}\label{Constant:Sharp2Restrict}
        \sup_{0\neq f \in \ell^2(\mathbb{Z})} \frac{\|\nabla^2(u \ast f)\|_{\ell^2(\mathbb{Z})}}{\|f\|_{\ell^2(\mathbb{Z})}} \geq \frac{4}{(n+1)^2}.
    \end{equation}
\end{proposition}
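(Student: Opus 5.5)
By Plancherel, the left-hand side of \eqref{Constant:Sharp2Restrict} equals $\|(e^{i\xi}-1)^2\widehat u(\xi)\|_{L^\infty(\mathbb{T})} = \max_\xi 4\sin^2(\xi/2)\,\widehat u(\xi)$. Here $\widehat u(\xi)=\sum_{k=-n}^n u(k)e^{-ik\xi}$ is a real cosine polynomial of degree $n$ with $\widehat u\ge 0$ and $\widehat u(0)=1$. So it suffices to show that $\max_\xi |1-e^{i\xi}|^2\,\widehat u(\xi)\ge 4/(n+1)^2$.

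First I would apply the Fej\'er--Riesz theorem (Lemma \ref{Lemma:FejerRiesz}) to write $\widehat u(\xi)=|q(e^{i\xi})|^2$ for some polynomial $q(z)=\sum_{j=0}^n c_j z^j$ of degree at most $n$. The normalization then becomes $|q(1)|=1$, and the quantity to bound is $\max_{|z|=1}|(1-z)q(z)|^2$. Set $P(z)=(1-z)q(z)$. This is a polynomial of degree at most $n+1$ with $P(1)=0$ and $|P'(1)|=|q(1)|=1$. The claim therefore reduces to
\[
\max_{|z|=1}|P(z)|\ \ge\ \frac{2}{n+1}\,|P'(1)|,
\]
that is, $|P'(1)|\le \frac{n+1}{2}\|P\|_{L^\infty(\partial\mathbb{D})}$.

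This is exactly the kind of statement the Erd\H{o}s--Lax theorem (Lemma \ref{Lemma:ErdosLax}) provides: if a polynomial of degree $N$ has no zeros in the open unit disk, then $\|P'\|_\infty\le \frac N2\|P\|_\infty$. Here $N=n+1$, which gives the constant $\frac{n+1}{2}$. The main obstacle is the hypothesis of the lemma: $P$ has a zero at $z=1$ on the circle, which is allowed, but its zeros inside the disk are those of $q$. To handle this I would use the freedom in the Fej\'er--Riesz factorization. Since $\widehat u\ge 0$, we may choose $q$ with all zeros in $|z|\ge1$, by reflecting each zero $a$ with $|a|<1$ to $1/\bar a$, which changes $|q|$ on the circle only by a constant factor. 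Once that is done, $P=(1-z)q$ has all its zeros in $|z|\ge 1$ and Erd\H{o}s--Lax applies. If $\deg P<n+1$, we can replace $N$ by the actual degree, which only gives a better constant. Alternatively, one can view $P$ as having zeros at infinity, and the bound still holds.

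Combining the steps gives $1=|P'(1)|\le \frac{n+1}{2}\max|P|$, so $\max_\xi 4\sin^2(\xi/2)\widehat u(\xi)=\max|P|^2\ge 4/(n+1)^2$. The remaining check is that the normalization $\widehat u(0)=\sum u(k)=1$ transfers correctly to $|q(1)|=1$ after the zero reflection. This holds because reflection preserves $|q|$ on the circle once the constant factor is chosen to fix the normalization.
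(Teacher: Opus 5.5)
Your proposal is correct and follows the paper's proof essentially step for step: Plancherel, the Fej\'er--Riesz factorization $\widehat u=|q|^2$ with $q$ zero-free in $\mathbb{D}$, then $P=(1-z)q$ with $|P'(1)|=|q(1)|=1$, and Erd\H{o}s--Lax with degree $n+1$. Your zero-reflection step is already contained in Lemma \ref{Lemma:FejerRiesz} as stated. Your remark that $\deg P<n+1$ only improves the constant handles a point the paper leaves implicit.
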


Richardson \cite[Theorem 1]{Richardson2026} established the corresponding optimal inequality when the kernel $u$ is no longer assumed to have non-negative Fourier transform:
\begin{equation}\label{Sharp2DerivNoRest}
    \sup_{0\neq f \in \ell^2(\mathbb{Z})} \frac{\|\nabla^2(u \ast f)\|_{\ell^2(\mathbb{Z})}}{\|f\|_{\ell^2(\mathbb{Z})}} \geq \frac{4}{n+1} \tan\left( \frac{\pi}{4(n+1)} \right).
\end{equation}
Observe that the constant on the right-hand side of \eqref{Sharp2DerivNoRest} behaves as $\pi/(n+1)^2$ when $n \to +\infty$. This shows the improvement from $4$ to $\pi$ in the sharp constant when removing the non-negative Fourier transform restriction on $u$.

Considering the results described above, it is natural to wonder what happens if we measure the smoothness of the convolution with other differential operators. In particular, what happens if we consider the discrete derivative of order $\ell>2$, defined recursively by $\nabla^{\ell} = \nabla(\nabla^{\ell-1})$? What are the corresponding sharp inequality and optimal kernel in this case? Recall that equality in \eqref{Constant:Sharp1NoRestrict} is achieved by the normalized indicator function $u = \frac{1}{2n+1}\mathds{1}_{[-n,n]}$. Additionally, we know that equality in \eqref{Constant:Sharp2Restrict} is attained only for the triangular kernel $u(k) = (n + 1 - |k|)/(n + 1)^2$ (see \cite[Theorem 2]{KravitzSteinerberger2021}), which can be interpreted as the convolution of the normalized characteristic function $\frac{1}{n+1}\mathds{1}_{[-n/2,n/2]}$ with itself. Therefore, it is natural to guess that, under certain restrictions on the kernel $u$, the optimal constant for $\nabla^{\ell}$ is attained by convolving a normalized indicator function with itself $\ell-1$ times. In our next result, we present a family of kernels for which this happens.

\begin{proposition}\label{Thm:NonSharpEvenDerivatives}
    Let $\ell$ be a positive even integer, let $n$ be a positive multiple of $\ell/2$, and let $u : \{-n,\dots,n\} \to \mathbb{R}$ be a symmetric function with normalization $\sum_{k=-n}^{n} u(k) = 1$. Assume that there exists a polynomial $Q(z) = \sum_{k=0}^{m} b_k z^k$ with no zeros in $\mathbb{D}$ such that $\widehat{u}(\xi) = |Q(e^{i\xi})|^{\ell}$ for all $\xi \in \mathbb{T}$. Then we have the inequality
    \begin{equation*}
        \sup_{0\neq f \in \ell^2(\mathbb{Z})} \frac{\|\nabla^{\ell}(u \ast f)\|_{\ell^2(\mathbb{Z})}}{\|f\|_{\ell^2(\mathbb{Z})}} \geq \left( \frac{2}{1 + 2n/\ell} \right)^{\ell}.
    \end{equation*}
\end{proposition}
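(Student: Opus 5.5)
The plan is to reduce the problem to a sup-norm estimate for a single polynomial on the unit circle, $P(z) = (z-1)Q(z)$, and to bound that via the Erdős–Lax theorem (Lemma \ref{Lemma:ErdosLax}) evaluated at $z = 1$.

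First, as in the introduction, Plancherel's theorem turns the quantity on the left into $\|(e^{i\xi}-1)^{\ell}\widehat{u}(\xi)\|_{L^\infty(\mathbb{T})}$. By hypothesis this equals
\begin{equation*}
\sup_{\xi\in\mathbb{T}} \bigl(|e^{i\xi}-1|\,|Q(e^{i\xi})|\bigr)^{\ell}.
\end{equation*}
So it suffices to prove $\|(z-1)Q(z)\|_{L^\infty(\partial\mathbb{D})} \geq 2/(1+N)$, where $N = 2n/\ell$ is a positive integer because $n$ is a multiple of $\ell/2$.

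Next I collect the constraints on $Q$. Since $|Q(e^{i\xi})|^2 = \sum_{|j|\le m} c_j e^{ij\xi}$ with $c_{\pm m} = b_0\overline{b_m}$ or its conjugate, and this is nonzero when $b_0b_m \ne 0$, the function $|Q|^{\ell} = (|Q|^2)^{\ell/2}$ is a trigonometric polynomial of exact degree $m\ell/2$. The factor $b_0$ is nonzero because $Q$ has no zeros in $\mathbb{D}$, and we may assume $b_m\neq 0$. The support condition $\operatorname{supp} u\subseteq\{-n,\dots,n\}$ therefore forces $m\ell/2 \le n$, that is, $\deg Q = m \le N$. The normalization gives $\widehat{u}(0) = \sum_k u(k) = 1$, hence $|Q(1)| = 1$.

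Now set $P(z) = (z-1)Q(z)$. It has degree $m+1$, and since $Q$ has no zeros in $\mathbb{D}$ and $1\notin\mathbb{D}$, $P$ has no zeros in the open disc. The Erdős–Lax theorem then gives
\begin{equation*}
\|P'\|_{L^\infty(\partial\mathbb{D})} \le \frac{m+1}{2}\,\|P\|_{L^\infty(\partial\mathbb{D})}.
\end{equation*}
Evaluating the derivative at $z=1$ gives $P'(1) = Q(1)$, so $1 = |Q(1)| \le \|P'\|_\infty$. Combining,
\begin{equation*}
\|P\|_\infty \geq \frac{2}{m+1} \geq \frac{2}{N+1} = \frac{2}{1+2n/\ell}.
\end{equation*}
Raising this to the power $\ell$ gives the claimed inequality.

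The step needing the most care is the degree bookkeeping: checking that the top coefficient of $(|Q|^2)^{\ell/2}$ does not vanish, so that the support of $u$ genuinely bounds $\deg Q \le 2n/\ell$. If $b_m$ could be zero one simply lowers $m$, so this is harmless. One should also make sure Lemma \ref{Lemma:ErdosLax} is stated for polynomials with no zeros in the open disc, allowing zeros on the boundary (here $P(1)=0$); that is the standard form of the result.
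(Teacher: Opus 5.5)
Your proof is correct and follows the route the paper intends: the paper omits the details and says the argument mirrors Proposition \ref{Thm:SecondDerivative}, namely Plancherel reduces the quantity to $\bigl(\max_{|z|=1}|(z-1)Q(z)|\bigr)^{\ell}$, and the Erd\"os--Lax theorem is applied to $P(z)=(z-1)Q(z)$ using $|P'(1)|=|Q(1)|=1$. Your degree bookkeeping is the one step the paper leaves implicit: the nonvanishing top coefficient $(b_m\overline{b_0})^{\ell/2}$ forces $m\ell/2\le n$, which is exactly what replaces the degree bound that Fej\'er--Riesz supplies directly when $\ell=2$.
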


The proof of this proposition essentially follows the same ideas as the proof of Proposition \ref{Thm:SecondDerivative}. Hence, we omit the details to avoid repetition. Observe that if $\ell=2$, an application of Fej\'er-Riesz theorem shows that the assumption of Proposition \ref{Thm:NonSharpEvenDerivatives} on the kernel $u$ is equivalent to requiring that $u$ has non-negative Fourier transform, which corresponds to the hypothesis in Proposition \ref{Thm:SecondDerivative}. Heuristically, the reason for this relation is that all roots of $\widehat{u}$ in $\mathbb{T}$ have even multiplicity when $\widehat{u} \geq 0$, and this allows us to write $\widehat{u} = |Q|^2$ for some suitable polynomial $Q$ with no zeros in $\mathbb{D}$. However, if we wanted a representation $\widehat{u} = |Q|^{4}$, all the roots of $\widehat{u}$ in $\mathbb{T}$ would necessarily have multiplicity divisible by $4$, which would need more assumptions on $u$. This suggests that the sharp constant in the case of four or six derivatives with $\widehat{u} \geq 0$ should be less than that arising from the convolution of four or six indicator functions, as we are about to see.

The first main result of this paper provides the sharp inequality and optimal kernel when the smoothness of an average is measured via the $\ell^2$-norm of its fourth discrete derivative and the kernels are restricted to have non-negative Fourier transform. It is worth pointing out that this is the first result of this type for differential operators with order greater than two.

\begin{theorem} \label{Thm:FourthDerivativeRest}
    Let $u : \{-n,\dots,n\} \to \mathbb{R}$ be a symmetric function with normalization $\sum_{k=-n}^{n} u(k) = 1$ and non-negative Fourier transform. Then we have the inequality
    \begin{equation} \label{Sharp4Deriv}
        \sup_{0\neq f \in \ell^2(\mathbb{Z})} \frac{\|\nabla^{4}(u \ast f)\|_{\ell^2(\mathbb{Z})}}{\|f\|_{\ell^2(\mathbb{Z})}} \geq \frac{2^6}{(n+2)^2} \tan^2 \left( \frac{\pi}{2n+4} \right).
    \end{equation}
    Moreover, the equality is attained if and only if
    \begin{equation}\label{OptimalKernel4Deriv}
        u(m) = u(-m) = \frac{1}{\pi} \int_{-1}^{1}S_{n}(x) T_m(x) \frac{dx}{\sqrt{1-x^2}}
    \end{equation}
    for every $m \in \{0,\dots,n\}$, where $S_n$ is defined as in \eqref{PolynomS_n} and $T_m$ denotes the Chebyshev polynomial of degree $m$.
\end{theorem}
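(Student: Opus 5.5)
The plan is to reduce \eqref{Sharp4Deriv} to an extremal problem for polynomials on the unit circle that has the same structure as the unrestricted second-derivative problem. I then hope to import the sharp bound \eqref{Sharp2DerivNoRest} of Richardson, with $n$ essentially replaced by $n/2$. A numerical check supports this. Writing Richardson's constant for degree parameter $m$ as $\frac{4}{m+1}\tan\frac{\pi}{4(m+1)}$ and putting $m+1=(n+2)/2$, its square is exactly $\frac{2^6}{(n+2)^2}\tan^2\frac{\pi}{2n+4}$.

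\emph{Step 1 (Fourier side).} By Plancherel, the left-hand side of \eqref{Sharp4Deriv} equals
\[
\max_{\xi\in\mathbb{T}}|e^{i\xi}-1|^4\,\widehat u(\xi).
\]
Since $\widehat u\ge 0$ is a real trigonometric polynomial of degree $n$, the Fej\'er--Riesz theorem (Lemma \ref{Lemma:FejerRiesz}) gives $\widehat u(\xi)=|h(e^{i\xi})|^2$ for some polynomial $h$ of degree at most $n$. We may take $h$ with real coefficients, and after multiplying by $\pm1$ we may assume $h(1)=1$, using $\widehat u(0)=1$. Put $P(z)=(z-1)^2h(z)$, a polynomial of degree at most $n+2$. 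The quantity to bound is then $\|P\|_{L^\infty(\partial\mathbb{D})}^2$, and $P$ satisfies $P(1)=P'(1)=0$ and $P''(1)=2$. Conversely, every such $P$ arises from an admissible $u$. So it suffices to show that the minimum of $\|P\|_\infty$ over this affine class equals $\frac{8}{n+2}\tan\frac{\pi}{2n+4}$, and to identify the minimizers.

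\emph{Step 2 (symmetrization).} Set $N=n+2$ and $P^*(z)=z^N\overline{P(1/\bar z)}$. For real coefficients, $P^*$ has the same sup norm on the circle. A direct computation shows that $P^*$ satisfies the same vanishing conditions at $1$, with $(P^*)''(1)=P''(1)-2(N-1)P'(1)=2$. Hence $\frac12(P+P^*)$ is admissible, is self-reciprocal, and has no larger norm. For self-reciprocal $P$, the function $g(\xi)=e^{-iN\xi/2}P(e^{i\xi})$ is real. It is a cosine polynomial in the frequencies $\frac N2,\frac N2-1,\dots$, has a double zero at $\xi=0$, and its second derivative there is fixed by the normalization.

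Substituting $\xi=2\theta$ turns $g$ into an even cosine polynomial of degree $N$ with a double zero at $\theta=0$ and a fixed second derivative. For this I invoke the trigonometric/Zolotarev-type extremal result underlying \eqref{Sharp2DerivNoRest}, applied at the appropriate parameter. When $n$ is even this is Richardson's theorem with $n$ replaced by $n/2$. When $n$ is odd, the half-integer frequencies require either redoing Richardson's argument or an interlacing/alternation argument directly on $\cos(\cdot)$-polynomials in $\theta$.

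\emph{Step 3 (main obstacle and equality).} The hardest step is the extremal bound in Step 2 for the odd-frequency case. There I would construct the extremal $g$ explicitly as $S_n$ expressed via $x=\cos\xi$; presumably $S_n$ is a Chebyshev composition of the form $T_{N}$ evaluated at an affinely rescaled argument involving $\sec\frac{\pi}{2N}$. I would then prove optimality by the standard equioscillation count: if some admissible $g$ had a smaller norm, the difference $g_{\rm ext}-g$ would have more zeros than its degree allows, counting the double zero at $0$. For the equality statement, strict convexity fails in general, so I would argue through this equioscillation instead. Any minimizer $P$ must have symmetrization equal to the unique extremal, and equioscillation at $N$ points forces $P=P^*$. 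Therefore $\widehat u=|h|^2=S_n(\cos\xi)$, and inverting the cosine expansion gives \eqref{OptimalKernel4Deriv}.
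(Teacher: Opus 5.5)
Your Steps 1--2 are correct, and they give a genuinely different route from the paper's.

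\textbf{How your route differs from the paper's.} The paper never factors $\widehat u$. It writes $\widehat u(\xi)=p_u(\cos\xi)$ with $p_u\ge 0$ on $[-1,1]$, and solves the one-sided problem for $(1-x)^2p(x)$ in Proposition \ref{Prop:SquarePolIneq}. That alternation argument uses $p\ge 0$ at the zeros of $\alpha(1+T_{n+2}(L(x)))$ and the upper bound at its maxima, and it works uniformly in $n$. Your approach instead uses a real Fej\'er--Riesz factor and the symmetrization $P\mapsto\frac12(P+P^*)$; your formula for $(P^*)''(1)$ is right, and the symmetrization plays the role of Lemma \ref{Lemma:ComplexKernels}. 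This turns the problem into a two-sided one of half the degree. For even $n$ you are running Theorem \ref{Thm:RelationK-2K} backwards: you deduce the $\nabla^4$ bound from \eqref{Sharp2DerivNoRest} with $m=n/2$, whereas the paper obtains \eqref{Sharp2DerivNoRest} as a corollary (Remark \ref{Rmk:Richardson}).

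\textbf{The gap.} As written, the proof is incomplete in two places.
\begin{enumerate}
\item For odd $n$ the extremal inequality is not proved. You defer to ``redoing Richardson's argument'' and only guess the extremal. Note also that for odd $N$ only $g^2$, not $g$, is a function of $x=\cos\xi$, so ``$g$ expressed as $S_n$ via $x$'' cannot be literally right.
\item For every $n$, the equality clause concerns the specific $S_n$ of \eqref{PolynomS_n}, and you never connect it to your extremal. Even for even $n$, Richardson's uniqueness only hands you his kernel; you still must check that $g_{\mathrm{ext}}^2=4(1-x)^2S_n(x)$.
\end{enumerate}

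\textbf{How to close it.} The missing argument is short. Put $N=n+2$ and $y=\cos(\xi/2)\in[0,1]$.
\begin{enumerate}
\item A self-reciprocal admissible $P$ gives $g(\xi)=Q(y)$, where $Q$ is real of degree at most $N$ with the parity of $N$. It satisfies $Q(1)=0$ and $Q'(1)=8$ (from $g''(0)=-2$), and $\|P\|_\infty=\max_{[0,1]}|Q|$.
\item Take $Q_{\mathrm{ext}}(y)=\frac8N\tan\frac{\pi}{2N}\,T_N\bigl(\cos\frac{\pi}{2N}\,y\bigr)$. It satisfies both constraints and takes the values $(-1)^j\frac8N\tan\frac{\pi}{2N}$ at $y_j=\cos(j\pi/N)/\cos(\pi/2N)$ for $1\le j\le\lfloor N/2\rfloor$.
\item Suppose $Q$ is admissible with no larger norm. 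Write $Q_{\mathrm{ext}}-Q=E(y^2)$ or $yE(y^2)$, with $\deg E\le\lfloor N/2\rfloor$. Interlacing gives $\lfloor N/2\rfloor-1$ zeros of $E$ in $[0,1)$, counted double at interior touching points. There is also a double zero at $y^2=1$, so $E\equiv0$. This handles both parities at once and needs no appeal to Richardson.
\item For the identification with $S_n$: $L(x)=\frac{1+\cos(\pi/N)}{2}(x+1)-1=T_2\bigl(\cos\frac{\pi}{2N}\,y\bigr)$ and $1+T_{2N}=2T_N^2$. With $\alpha=\frac{8}{N^2}\tan^2\frac{\pi}{2N}$ this gives $Q_{\mathrm{ext}}^2=4\alpha\bigl(1+T_N(L(x))\bigr)=4(1-x)^2S_n(x)$, which is exactly what \eqref{OptimalKernel4Deriv} needs.
\end{enumerate}

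\textbf{A correction in the step forcing $P=P^*$.} The count is $N-1$, not $N$: $g_{\mathrm{ext}}$ has $N-1$ extremal points on the circle, and $\operatorname{Im}\bigl(e^{-iN\xi/2}P(e^{i\xi})\bigr)$ must vanish at each. Together with the triple zero of $P-P^*$ at $z=1$, this gives $N+2>N$ zeros, so $P=P^*$.
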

\begin{remark}
    The constant on the right-hand side of \eqref{Sharp4Deriv} behaves asymptotically as 
    \begin{equation*}
        \frac{2^4 \pi^2}{(n+2)^4}
    \end{equation*}
    when $n \to +\infty$. Since $\pi^2 < 2^4$, this result clearly improves that of Proposition \ref{Thm:NonSharpEvenDerivatives} for $\ell=4$.
\end{remark}
\begin{figure}[ht]
    \centering
    \begin{subfigure}{0.48\textwidth}
        \includegraphics[width=\textwidth]{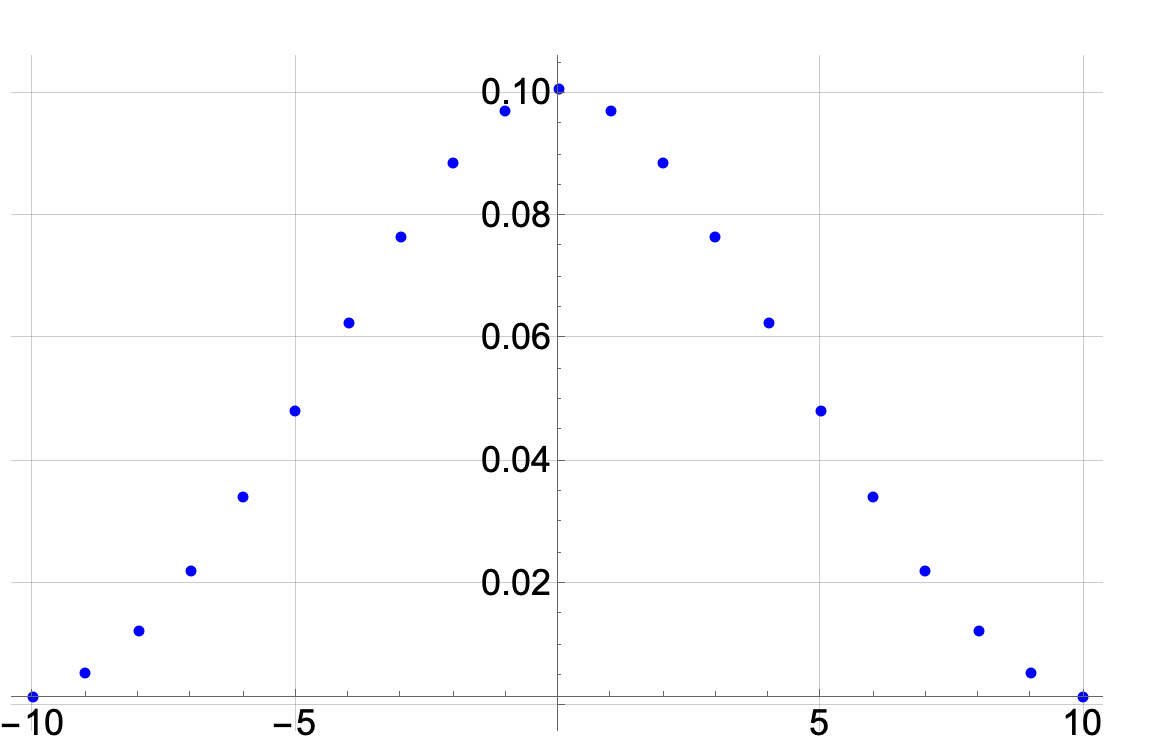}
        \caption{$u_{10}$}
        \label{fig:subfig3}
    \end{subfigure}
    \hspace{0.2cm}
    \begin{subfigure}{0.48\textwidth}
    \includegraphics[width=\textwidth]{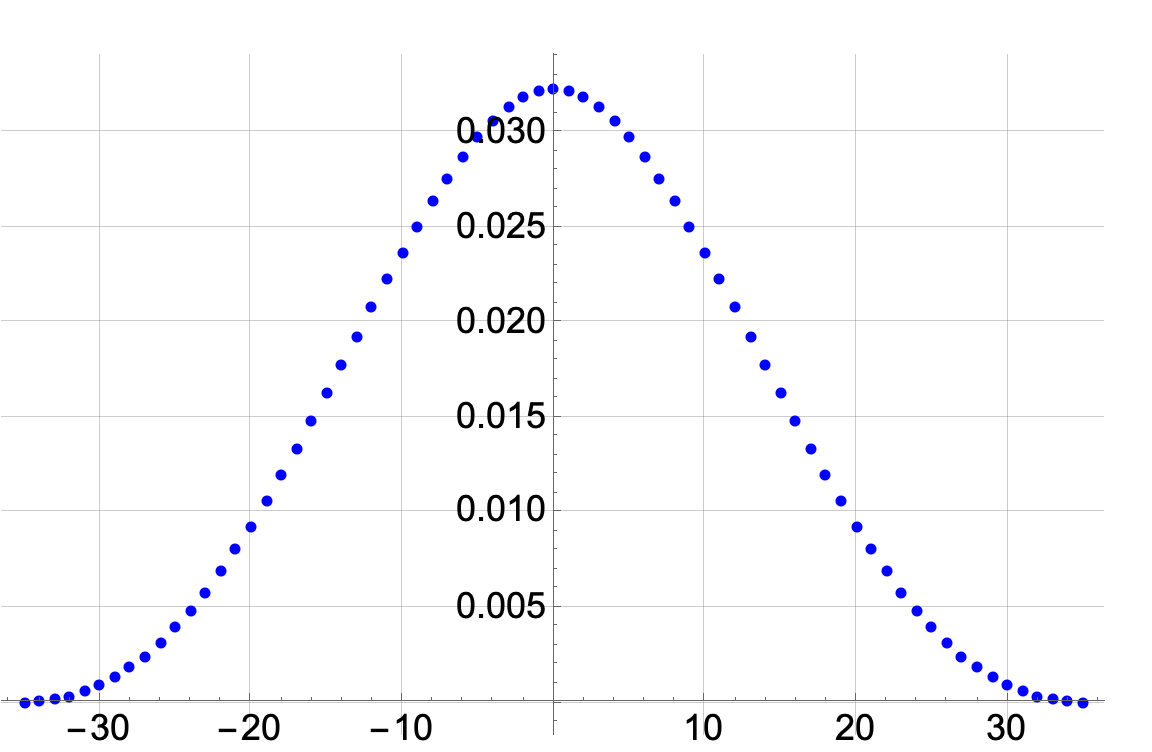}
        \caption{$u_{35}$}
        \label{fig:subfig4}
    \end{subfigure}
    \caption{The optimal kernels $u_{10}$ and $u_{35}$ from Theorem \ref{Thm:FourthDerivativeRest}.}
    \label{fig:two-subfigures1}
\end{figure}

In order to prove Theorem \ref{Thm:FourthDerivativeRest}, we apply the Fourier transform to reduce the problem to minimizing the norm $\|q\|_{L^{\infty}([-1,1])}$ over all polynomials $q$ of degree at most $n$ which have a double root at $x=1$, satisfy $q''(1) = 2$, and do not take negative values on $[-1,1]$. The extremal polynomial of this problem is obtained by manipulating properly the $n$-th Chebyshev polynomial. This result is stated precisely below.

\begin{proposition}\label{Prop:SquarePolIneq}
    Let $p$ be a polynomial of degree at most $n-2$ that is non-negative on $[-1,1]$ and satisfies $p(1)=1$. Then we have the inequality
    \begin{equation} \label{PolynomIneq}
        \max_{x \in [-1,1]}|(1-x)^2 p (x)| \geq \frac{16}{n^2} \tan^2 \left( \frac{\pi}{2n} \right).
    \end{equation}
    Moreover, the equality is attained if and only if
    \begin{equation} \label{PolynomS_n}
        p(x) = S_{n-2}(x) := \frac{8}{n^2} \tan^2 \left( \frac{\pi}{2n} \right) \frac{1}{(1-x)^2} \left( 1 + T_n\left( \frac{1+\cos(\pi/n)}{2}(x+1)-1 \right) \right).
    \end{equation}
\end{proposition}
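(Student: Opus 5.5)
We reduce to a Chebyshev-type extremal problem for $q(x)=(1-x)^2p(x)$, which has degree at most $n$, is non-negative on $[-1,1]$, has a double zero at $x=1$, and satisfies $q''(1)=2p(1)=2$. Set $M=\max_{[-1,1]}q$ and $a=\cos(\pi/n)$, and let $L(x)=\frac{1+a}{2}(x+1)-1$ be the affine map sending $[-1,1]$ onto $[-1,a]$ with $L(1)=a$. The candidate extremal is $Q(x)=c\,(1+T_n(L(x)))$. Since $T_n(a)=\cos\pi=-1$ and $T_n'(a)=0$ (because $\sin(n\arccos a)=0$), $Q$ has a double zero at $1$. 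Moreover $0\le Q\le 2c$ on $[-1,1]$, because $L([-1,1])=[-1,a]\subset[-1,1]$. We compute $Q''(1)=c\,T_n''(a)\bigl(\frac{1+a}{2}\bigr)^2$. Using the Chebyshev ODE $(1-x^2)T_n''=xT_n'-n^2T_n$ at $x=a$ gives $T_n''(a)=n^2/(1-a^2)$. Hence
\[
Q''(1)=\frac{c\,n^2}{1-a^2}\cdot\frac{(1+a)^2}{4}=\frac{c\,n^2}{4}\cdot\frac{1+a}{1-a}=\frac{c\,n^2}{4}\cot^2\!\Bigl(\frac{\pi}{2n}\Bigr).
\]
Normalizing $Q''(1)=2$ gives $c=\frac{8}{n^2}\tan^2(\pi/2n)$ and $\max Q=2c=\frac{16}{n^2}\tan^2(\pi/2n)$. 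This matches \eqref{PolynomIneq} and $S_{n-2}$. So it remains to prove that every admissible $q$ has $M\ge 2c$, with equality only for $Q$.

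For the lower bound I would use an equioscillation/alternation argument. Suppose $q$ is admissible with $M<2c$, and consider $r=Q-\frac{2c}{M}\cdot\frac{M}{2c}\,q$. More cleanly, rescale by setting $\tilde q=\frac{2c}{M}q$ when $M\le 2c$. Then $\tilde q''(1)=\frac{4c}{M}\ge 2$ and $0\le\tilde q\le 2c$. Set $r=Q-\tilde q$, a polynomial of degree $\le n$ with $r(1)=r'(1)=0$ and $r''(1)=2-\frac{4c}{M}\le0$, strictly negative if $M<2c$. The polynomial $1+T_n(L(x))$ touches $0$ and $2$ at the images of the extremal points $\cos(j\pi/n)$, $j=1,\dots,n$. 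These give points $1>x_1>x_2>\dots>x_n=-1$ in $[-1,1)$, where $x_1=L^{-1}(a)=1$. The remaining points $x_2,\dots,x_n$ alternate between $Q=2c$ (where $r\ge0$) and $Q=0$ (where $r\le0$, with a double zero of $Q$ at interior touching points). Counting zeros of $r$ with multiplicity, I expect to get at least $n-2$ sign changes or touchings among $x_2,\dots,x_n$ plus the double zero at $1$. The sign of $r''(1)$ then forces one extra zero near $1$, because $r$ must go from negative near $1$ to $\ge0$ at $x_2$ (where $Q=2c$). This yields $n+1$ zeros, so $r\equiv0$, which contradicts $r''(1)<0$. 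The same count with $M=2c$ gives $r''(1)=0$, and then the zero count forces $r\equiv0$, which gives uniqueness.

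The main obstacle is the careful multiplicity bookkeeping. At interior points where $Q=0$, we have $Q$ with a double zero while $\tilde q\ge0$. Hence $r\le0$ there, and if $r=0$ then $r$ has a double zero, because $\tilde q\ge0$ also has a zero there. So each such point contributes either a genuine sign alternation or a double zero, and the standard weak-alternation counting (``zeros counted appropriately'') must be done rigorously. I would handle this by the usual perturbation lemma: a polynomial weakly alternating at $m+1$ points, in the sense $(-1)^j r(y_j)\ge0$, has at least $m$ zeros in the hull counted with multiplicity. I would then add the double zero at $1$ and the extra zero forced by $r''(1)<0$ between $1$ and $x_2$, verifying that the total reaches $n+1>\deg r$.
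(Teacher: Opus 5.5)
Your proposal is correct and is essentially the paper's argument. You use the same extremal $c\,(1+T_n(L(x)))$, the same normalization via $T_n''(\cos(\pi/n))$, and the same zero count on the difference with the extremal; the paper writes that difference as $(1-x)^2p-q=(1-x)^3r$ with $\deg r\le n-3$ and finds $n-2$ zeros of $r$ in $[-1,1)$. Your rescaling is not needed: if $M\le 2c$, then $r=Q-q$ already has a triple zero at $1$, which gives the bound and uniqueness at once. If you keep the case $M<2c$, apply your weak-alternation lemma to the $n$ nodes $1-\delta,x_2,\dots,x_n$, so that the zero forced near $1$ is not double-counted with a zero at $x_2$.
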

The idea behind the construction of $S_{n-2}$ is as follows: We translate and compose $T_n$ with a linear function in such a way that
\begin{enumerate}
\item The transformed polynomial $\bar{T_n}(x):= 1 + T_n\left( \frac{1+\cos(\pi/n)}{2}(x+1)-1 \right) $ takes the two extremal values 0 and $2$ a total of $n$ times on the interval $[-1,1]$ at the points $x_k=\frac{1-\cos(\pi/n)}{1+\cos(\pi/n)}+\frac{2\cos(k\pi/n)}{1+\cos(\pi/n)}$ for all $k \in \{1, \dots, n\}$ (notice that $x_1=1$ and $x_n=-1$). But most importantly, it preserves the equioscillation property as much as possible in $[-1,1]$ with the imposed behavior at $x_1=1$. See Figure \ref{fig:1}.
\item  The polynomial $\bar{T_n}$ has a double root at $x_1=1$, which implies that the function $S_{n-2}$ is indeed a polynomial of degree $n-2$.
\item  The polynomial $S_{n-2}(x)$ is non-negative on $[-1,1]$ and satisfies the normalization $S_{n-2}(1)=1$, because $\frac{\bar{T_n}''(1)}{2!}=\frac{n^2}{8} \cot^2\left( \frac{\pi}{2n} \right)$.
\end{enumerate}
\begin{figure}[h]
    \centering
\includegraphics[scale=0.5]{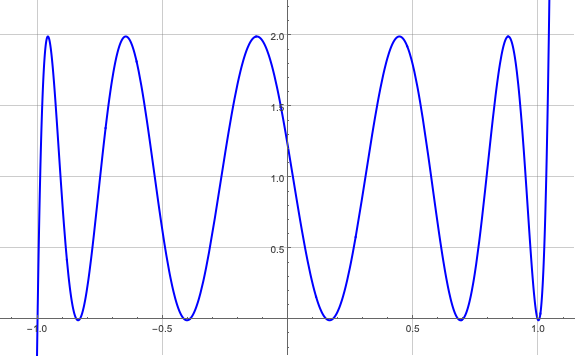}    
    \caption{Transformed Chebyshev polynomial $\bar{T}_{11}(x)$.}
    \label{fig:1}
\end{figure}

The second main result of this paper provides the sharp inequality and optimal kernel when the smoothness of an average is measured via the $\ell^2$-norm of its sixth discrete derivative and the kernels are restricted to have non-negative Fourier transform. It is worth mentioning that elliptic functions will be used for constructing the family of polynomials that will solve the problem with six discrete derivatives. Thus, some of these functions will show up in this optimal constant. 

For each $1\leq j\leq 4$, let $\Theta_j(u) := \Theta_j(u,k)$ be the $j$-th \textit{Jacobi Theta function} with \textit{Jacobi elliptic modulus} $k$, $0<k^2<1$. Let $Z(u):=Z(u,k)=\Theta_4'(u) / \Theta_4(u)$ be the  \textit{Jacobi Zeta function}. Let $\operatorname{cn}(u):=\operatorname{cn}(u,k)$, $\operatorname{sn}(u):=\operatorname{sn}(u,k)$, and $\operatorname{dn}(u):=\operatorname{dn}(u,k)$ be the \textit{Jacobi elliptic cosine, sine, and delta functions}. Also, let $K(k)$ and $E(k)$ be the \textit{complete elliptic integrals of the first and second kinds}, respectively. For references, see for example \cite[{\href{https://dlmf.nist.gov/20.2}{T. F.}},{\href{https://dlmf.nist.gov/22}{Jacobi E. F.}, \href{https://dlmf.nist.gov/19.2\#ii}{E. I.}}]{NIST:DLMF}. Our second main theorem is stated below.

\begin{theorem}\label{Thm:SixthDerivativeRest}
    Let $u : \{-n,\dots,n\} \to \mathbb{R}$ be a symmetric function with normalization $\sum_{k=-n}^{n} u(k) = 1$ and non-negative Fourier transform. Then we have the inequality
    \begin{equation} \label{Sharp6Deriv}
        \sup_{0\neq f \in \ell^2(\mathbb{Z})} \frac{\|\nabla^{6}(u \ast f)\|_{\ell^2(\mathbb{Z})}}{\|f\|_{\ell^2(\mathbb{Z})}} \geq \dfrac{2^4\cdot 3^2}{(n+3)^2}\left(\frac{1-\operatorname{cn}(2a_{n+3})}{\operatorname{dn}(2a_{n+3})}\right)^2,
    \end{equation}
    where $a_n = K(k_n)/n$ and $k_n$ is the solution to the equation
\begin{equation}\label{c=1}
    \operatorname{cn}\left(\frac{2K(k_n)}{n}\right)+2\operatorname{sn}\left(\frac{2K(k_n)}{n}\right)Z\left(\frac{K(k_n)}{n}\right)=1.
\end{equation}
Moreover, the equality is attained if and only if
    \begin{equation*}
        u(m) = u(-m) = \frac{1}{\pi} \int_{-1}^{1} \bar{Z2}_{n+3}(x,k_{n+3},1) T_{m}(x) \frac{dx}{\sqrt{1-x^2}}
    \end{equation*}
    for every $m \in \{0,\dots,n\}$, where $T_m$ denotes the Chebyshev polynomial of degree $m$ and $\bar{Z2}_{n}(x,k_n,1)$ is defined as
    \begin{equation}\label{poly6deriv}
        \bar{Z2}_n(x,k_n,1) := \dfrac{9}{n^2}\left(\frac{1-\operatorname{cn}(2a_n)}{\operatorname{dn}(2a_n)}\right)^2\dfrac{1}{(1-x)^3} \ (1-Z2_{n}(x,k_n,1)).
    \end{equation}
    Here, $Z2_n$ stands for the Zolotarev polynomial of the first kind and second type of degree $n$ (as defined in \cite{Levedev1994}).
\end{theorem}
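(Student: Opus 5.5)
The plan is to follow the reduction used for Theorem \ref{Thm:FourthDerivativeRest}, replacing the Chebyshev extremal by a Zolotarev one.

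\textbf{Step 1: reduction to a polynomial problem.} By Plancherel, the supremum equals $\|(e^{i\xi}-1)^6\widehat u(\xi)\|_{L^\infty(\mathbb{T})} = \|(2-2\cos\xi)^3\widehat u(\xi)\|_{L^\infty}$. Since $u$ is symmetric, $\widehat u(\xi)=q(\cos\xi)$ for a polynomial $q$ of degree at most $n$, with $q(1)=1$ and $q\ge 0$ on $[-1,1]$. With $x=\cos\xi$, the quantity becomes $8\max_{[-1,1]}(1-x)^3 q(x)$.

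Set $P(x)=(1-x)^3q(x)$. Then $P$ is a polynomial of degree at most $N:=n+3$, and it satisfies the following:
\begin{itemize}
\item $P(1)=P'(1)=P''(1)=0$;
\item $P'''(1)=-6$;
\item $P\ge 0$ on $[-1,1]$.
\end{itemize}
The theorem therefore reduces to an analogue of Proposition \ref{Prop:SquarePolIneq}: one must minimize $\max_{[-1,1]}P$ under these constraints. The claimed extremal is $P^*=c_N(1-Z2_N)$, where the Zolotarev polynomial $Z2_N$ is normalized so that $Z2_N$ takes values in $[-1,1]$ on $[-1,1]$ and $Z2_N$ has a triple contact with $1$ at $x=1$. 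The constant $c_N$ is chosen so that $(P^*)'''(1)=-6$. One then checks, using Lebedev's representation of $Z2_N$ via $\operatorname{sn},\operatorname{cn},\operatorname{dn}$, the following facts:
\begin{itemize}
\item Condition \eqref{c=1} is exactly the requirement that the contact at $x=1$ is of order three, i.e.\ $Z2_N'(1)=Z2_N''(1)=0$.
\item $Z2_N'''(1)$ evaluates to a multiple of $N^2\bigl(\operatorname{dn}(2a_N)/(1-\operatorname{cn}(2a_N))\bigr)^2$, so that $\max P^* = 2c_N$ gives the constant in \eqref{Sharp6Deriv} after the factor $8$.
\end{itemize}

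\textbf{Step 2: equioscillation and lower bound.} I would establish that $1-Z2_N$ attains the values $0$ and $2$ alternately at $N-2$ points $1=y_1>y_2>\dots>y_{N-2}=-1$, with $y_1$ a triple zero (counted as the three conditions). I would take this count from Lebedev's description of $Z2_N$ as having $N-2$ alternation points plus the prescribed behavior at the endpoint.

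Suppose some admissible $P$ has $\max P<\max P^*=:M$. Consider $D=P^*-P$, which has degree at most $N$. The argument proceeds as follows:
\begin{itemize}
\item At the points where $P^*=M$, we get $D>0$.
\item At the interior points where $P^*=0$, we have $P\ge0$, so $D\le 0$.
\item $D$ vanishes to order three at $1$, because $D(1)=D'(1)=D''(1)=D'''(1)=0$; in fact this gives four conditions.
\end{itemize}
Counting sign changes between consecutive alternation points, and handling zeros of $D$ at the points where $P^*=0$ with multiplicity two (as tangency, since both $P$ and $P^*$ are $\ge 0$ there, so $D$ has a double zero or a sign change), yields at least $N+1$ zeros. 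This forces $D\equiv0$, a contradiction, and gives the inequality.

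For uniqueness, I would run the same count with $\max P=M$ and obtain $D\equiv0$, hence $q=\bar{Z2}_N$. The formula for $u(m)$ then follows by expanding $q$ in Chebyshev polynomials via their orthogonality.

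\textbf{Main obstacle.} The hardest part is the zero counting in Step 2, done carefully with the weak inequalities, together with verifying that $1-Z2_N$ really has enough alternation points. The latter means checking that, under \eqref{c=1}, the Zolotarev polynomial has the right number of extrema at level $-1$ and $+1$ inside $[-1,1]$, so that the total count reaches $N+1$. I would first try to mirror exactly the count in the proof of Proposition \ref{Prop:SquarePolIneq}, where the double zero at $1$ plus $n-1$ alternations works. The concern is that for the Zolotarev case one alternation fewer is available, and it is compensated by the extra vanishing of $D'''(1)$. Checking that this balance closes is the crux.
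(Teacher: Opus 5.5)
Your reduction and overall strategy are the paper's. Plancherel with $x=\cos\xi$ turns the problem into minimizing $8\max_{[-1,1]}(1-x)^3q$. The extremal is $c_N\bigl(1-Z2_N(\cdot,k_N,1)\bigr)$ with $N=n+3$ and $c_N=1/z_3$, where $z_3=Z2_N'''(1)/6=\frac{N^2}{9}\bigl(\frac{\operatorname{dn}(2a_N)}{1-\operatorname{cn}(2a_N)}\bigr)^2$. The bound and uniqueness then come from the zero count of Proposition \ref{Prop:SquarePolIneq}. The checks you attribute to the elliptic-function formulas are done in the paper through Lebedev's identities $Z2_N^2-G\,V2_{N-2}^2=1$ and $Z2_N'=N(x-c)V2_{N-2}$ with $c=1$. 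These give $Z2_N'(1)=Z2_N''(1)=0$ and the value of $z_3$ directly.

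\textbf{The gap.} The step you leave open is exactly where your argument, as written, fails.
\begin{enumerate}
\item Since $D=P^*-P=(1-x)^4r$ with $\deg r\le N-4$, you need $N-3$ zeros of $r$ in $[-1,1)$. That requires $N-2$ alternation points of $P^*$ in $[-1,1)$.
\item Your list $1=y_1>y_2>\dots>y_{N-2}=-1$ contains only $N-3$ points of $[-1,1)$. It therefore gives $N-4$ zeros of $r$, i.e.\ only $N$ zeros of $D$ with multiplicity, which does not force $D\equiv 0$.
\item The interval $(y_2,1)$ does not rescue the count. The sign of $D$ near $1$ is the sign of $D^{(4)}(1)$, which is uncontrolled.
\end{enumerate}

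\textbf{The missing ingredient} is the correct alternation count: $Z2_N(\cdot,k_N,1)$ attains $\pm1$ alternately at $N-1$ points of $[-1,1]$, both endpoints included. The paper derives this from $Z2_N(x,k_N,1)=\cos\bigl(NI(x,1)\bigr)$, where
$NI(x,1)=N\int_x^1\frac{(1-t)\,dt}{\sqrt{(1-t^2)((t-\gamma)^2+\varepsilon^2)}}$.
\begin{enumerate}
\item This function is strictly decreasing on $[-1,1]$, since $c=1$ makes the integrand positive.
\item It equals $0$ at $x=1$ and $(N-2\mu)\pi=(N-2)\pi$ at $x=-1$, because $\mu=1$.
\item Hence it passes through $0,\pi,\dots,(N-2)\pi$ exactly once each, giving the $N-1$ alternation points.
\end{enumerate}
With these $N-2$ points in $[-1,1)$ the count is $4+(N-3)=N+1$, so $r\equiv 0$. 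Your closing heuristic (one alternation fewer than in the Chebyshev case, compensated by $D'''(1)=0$) is the right one. But your explicit list is two points short of the Chebyshev count $N$, not one.

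\textbf{A smaller point on uniqueness.} When $\max P=M$, you also need the double-zero argument at interior points where $P=P^*=M$. There $P'=0$ because $P\le M$, exactly as in the paper's proof.
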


\begin{figure}[h]
    \centering
    \begin{subfigure}[b]{0.48\textwidth}
        \includegraphics[width=\textwidth]{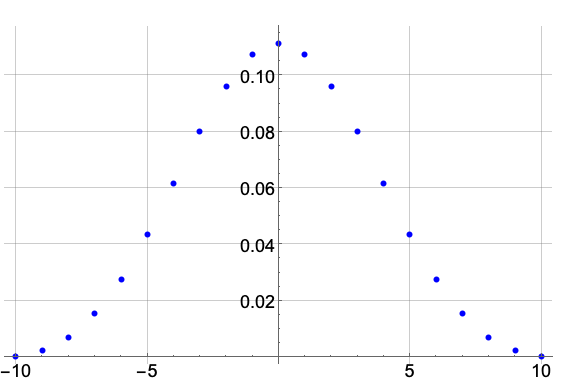}
        \caption{$u_{10}$}
        \label{fig:subfig1}
    \end{subfigure}
    \hspace{0.2cm}
    \begin{subfigure}[b]{0.48\textwidth}
    \includegraphics[width=\textwidth]{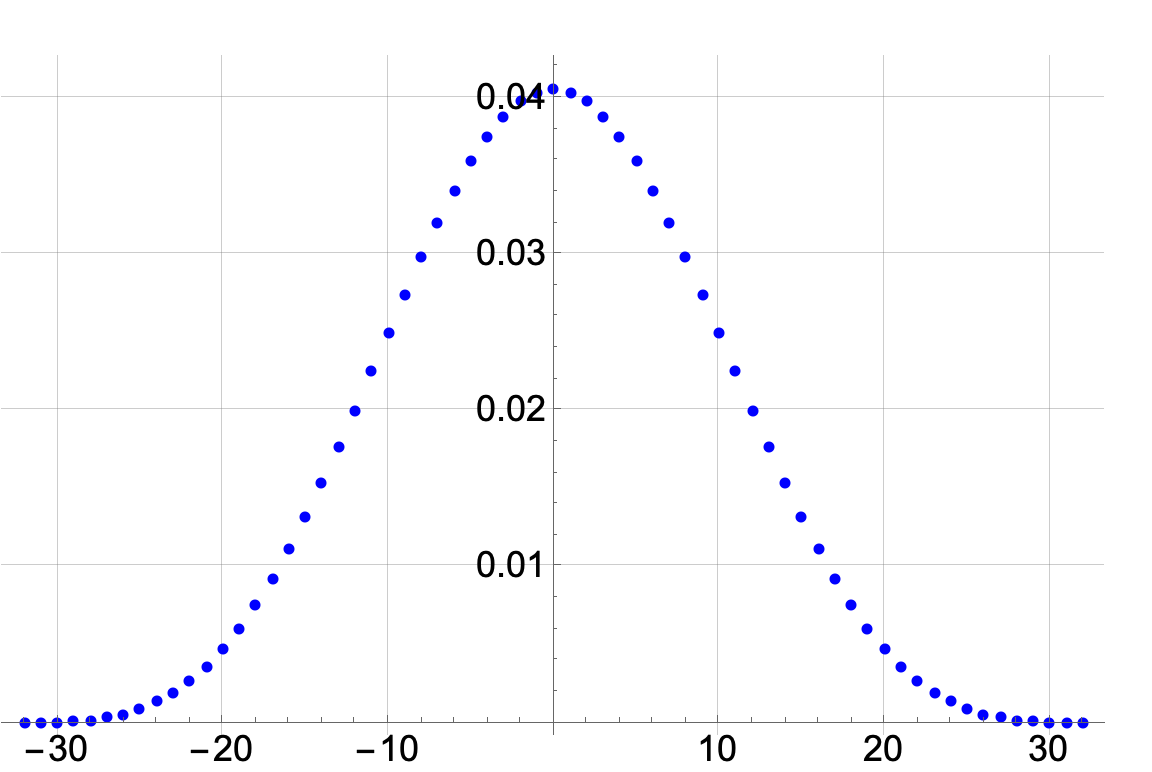}
        \caption{$u_{35}$}
        \label{fig:subfig2}
    \end{subfigure}
    
    \caption{The optimal kernels $u_{10}$ and $u_{35}$ from Theorem \ref{Thm:SixthDerivativeRest}.}
    \label{fig:two-subfigures}
\end{figure}

The following example (see Figure \ref{ExampleWaterLvl}) appears in \cite[Figure 2]{Richardson2026} as an application to smooth the data measuring the water level of Lake Chelan over the period of time from July 16, 2011 to July 30, 2011. We repeat this experiment here, but now using the kernel $u_{35}$ from Theorem \ref{Thm:SixthDerivativeRest}.

\begin{figure}[h]
    \centering
    \includegraphics[width=1\linewidth]{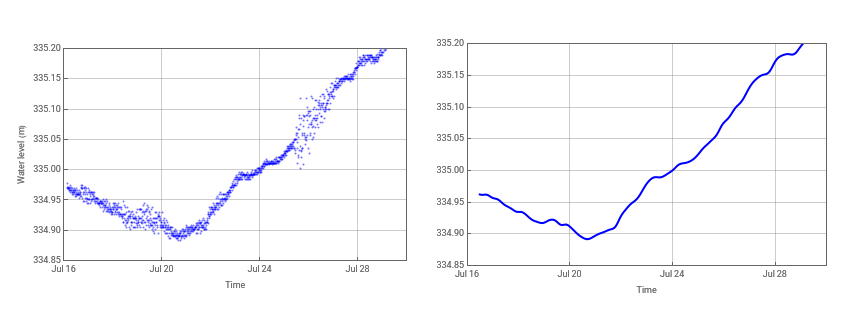}
    \caption{Original data $f$ and the smoothed data $u_{35}\ast f$. }
    \label{ExampleWaterLvl}
\end{figure}

\begin{remark}\label{Remark asymptotycs}
    The constant on the right-hand side of \eqref{Sharp6Deriv} behaves asymptotically as 
    \begin{equation*}
        \frac{2^6\cdot 3^2}{(n+3)^6}K(k^*)^4
    \end{equation*}
    when $n \to +\infty$. Here, $k^*$ is the solution of $K(k^*)=2E(k^*)$. We have obtained numerically that $k^*\approx 0.9089$ and $K(k^*)\approx2.3210$.  Table \ref{tab:comparison-40-50} shows that the asymptotic behavior is already noticeable in the range $75\leq n\leq 80$.
\begin{table}[h]
\centering
\begin{tabular}{|c| c| c|}
\hline
$n$ &
$\left(\frac{1-\operatorname{cn}(2a_n,k_n)}{\operatorname{dn}(2a_n,k_n)}\right)^2$  &
$4\left(\frac{K(0.9089)}{n}\right)^4$ \\ [10pt]
\hline
$75$  & $3.67363\times 10^{-6}$& $3.66873\times 10^{-6}$ \\
$76$  & $3.48395\times 10^{-6}$ & $3.47941\times 10^{-6}$ \\
$77$  & $3.30636\times 10^{-6}$ & $3.30216\times 10^{-6}$ \\
$78$  & $3.13994\times 10^{-6}$ & $3.13604\times 10^{-6}$ \\
$79$  & $2.98386\times 10^{-6}$ & $2.98025\times 10^{-6}$ \\
$80$  & $2.83736\times 10^{-6}$& $2.83400\times 10^{-6}$ \\
\hline
\end{tabular}
\caption{Asymptotics of the optimal constant from Theorem \ref{Thm:SixthDerivativeRest}.}
\label{tab:comparison-40-50}
\end{table}
\end{remark}
As it was done before, the proof of Theorem \ref{Thm:SixthDerivativeRest} may be reduced to minimizing the norm $\|q\|_{L^{\infty}([-1,1])}$ over all polynomials $q$ of degree at most $n$ which have a triple root at $x=1$, satisfy $q'''(1) = -6$, and do not take negative values on $[-1,1]$. However, this problem differs significantly from Proposition \ref{Prop:SquarePolIneq} (and also from \cite[Theorems 4 and 5]{KravitzSteinerberger2021} and \cite[Theorem 3]{Richardson2026}). In this situation, we need to consider the \textit{Zolotarev polynomials}, a broader class of polynomials containing those of Chebyshev. These polynomials have been successfully used to solve some extremal problems, see \cite{ErdosSzego1942,Peherstorfer1991,Levedev1994,Peherstorfer2009,BojanovNaidenov2010} and references therein. In our case, we find that the optimal polynomial is a transformation of the $n$-th degree \textit{Zolotarev polynomial of the first kind and second type}, as defined by Lebedev in \cite{Levedev1994}.

\begin{proposition}\label{(1-x)^3 theorem}
    Let $p$ be a polynomial of degree at most $n-3$ that is non-negative on $[-1,1]$ and satisfies $p(1)=1$. Then we have the inequality
    \begin{equation} \label{PolynomIneq6}
        \max_{x \in [-1,1]}|(1-x)^3 p(x)| \geq \dfrac{2\cdot3^2}{n^2}\left(\frac{1-\operatorname{cn}(2a_n)}{\operatorname{dn}(2a_n)}\right)^2,
    \end{equation}
where $a_n$ and $k_n$ are defined as in the statement of Theorem \ref{Thm:SixthDerivativeRest}. Moreover, the equality is attained if and only if $p(x) = \bar{Z2}_n(x,k_n,1)$, as defined in \eqref{poly6deriv}.
\end{proposition}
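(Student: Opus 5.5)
Writing $q(x) = (1-x)^3 p(x)$, the problem becomes: minimize $\|q\|_{L^\infty([-1,1])}$ over polynomials $q$ of degree at most $n$ that are non-negative on $[-1,1]$, have a triple zero at $x=1$, and satisfy $q'''(1) = -6$. Since $q\ge 0$ on $[-1,1]$ and $M:=\|q\|_\infty$, the function $1 - 2q/M$ takes values in $[-1,1]$ on $[-1,1]$. It equals $1$ at $x=1$ with vanishing first and second derivatives there, and its third derivative at $1$ is $12/M$. So the statement is equivalent to a Markov-type extremal problem: among polynomials $P$ of degree $\le n$ with $\|P\|_{[-1,1]}\le 1$, $P(1)=1$ and $P'(1)=P''(1)=0$, maximize $P'''(1)$. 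The claim is that the maximum is attained only by $Z2_n(\cdot,k_n,1)$, so that $M_{\min} = 12/Z2_n'''(1)$. The constant in \eqref{PolynomIneq6} should then follow from Lebedev's explicit parametrization of $Z2_n$.

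\textbf{Step 1: the extremal polynomial.} I will recall from \cite{Levedev1994} the representation of $Z2_n(x,k,\tau)$ through Jacobi elliptic functions: $x$ is parametrized by $u$ via a rational expression in $\operatorname{sn},\operatorname{cn},\operatorname{dn}$, and $Z2_n$ is expressed through $\Theta$-quotients raised to the power $n$. The equation \eqref{c=1} is exactly the condition that $x=1$ sits at the endpoint of the equioscillation interval with $Z2_n(1)=1$ and $Z2_n'(1)=0$; I will check this using $Z=\Theta_4'/\Theta_4$. I will then verify three facts:
\begin{itemize}
\item[(i)] $1-Z2_n$ has a triple zero at $1$, which makes $\bar{Z2}_n$ a polynomial of degree $n-3$;
\item[(ii)] $|Z2_n|\le 1$ on $[-1,1]$, with alternation points $1=y_0>y_1>\dots>y_{n-2}=-1$ at which $Z2_n(y_j)=(-1)^j$, giving $n-1$ alternation points in all;
\item[(iii)] $Z2_n'''(1)$ equals $\frac{2n^2}{3}\bigl(\operatorname{dn}(2a_n)/(1-\operatorname{cn}(2a_n))\bigr)^2$.
\end{itemize}
This third-derivative value is what the normalization in \eqref{poly6deriv} requires, and I expect it to follow from a Taylor expansion of the elliptic parametrization near the endpoint. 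As a consistency check I will confirm the limit $n\to\infty$ against Remark \ref{Remark asymptotycs}.

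\textbf{Step 2: lower bound and uniqueness, by a counting argument.} Suppose $P$ is admissible with $P'''(1) \ge Z2_n'''(1)$ and $P\ne Z2_n$. Set $D = Z2_n - P$, a nonzero polynomial of degree $\le n$. At interior alternation points we have $(-1)^j D(y_j)\ge 0$, and $D$ also satisfies $D(1)=D'(1)=D''(1)=0$ and $D'''(1)\le 0$. Counting sign changes of $D$ between $y_1,\dots,y_{n-2}$ gives at least $n-3$ zeros in $[-1,y_1]$, counted with multiplicity in the usual way. Touching points count twice: if $D(y_j)=0$ at an interior $y_j$, then $D'(y_j)=0$ as well, because both $P$ and $Z2_n$ attain an extremum there. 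The triple zero at $1$ contributes $3$ more. The sign condition on $D'''(1)$, compared with the sign of $D(y_1)$, should force one more zero in $(y_1,1)$. Concretely, near $1$ we have $D(x)\approx D'''(1)(x-1)^3/6\ge 0$ for $x<1$, while $D(y_1)\le 0$ because $Z2_n(y_1)=-1$. This gives at least $n+1$ zeros in total, so $D\equiv 0$. The equality case $D'''(1)=0$ must be handled by the standard perturbation or multiplicity refinement.

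\textbf{Main obstacle.} I expect the hardest part to be Step 1(ii) together with (iii): proving that Lebedev's $Z2_n$ with modulus fixed by \eqref{c=1} really has exactly the right alternation count on all of $[-1,1]$, with $x=-1$ being an alternation point, and computing the third derivative in closed form. For this I would map $u\in[0,K]$ (or the appropriate contour) onto $[-1,1]$ monotonically and read off the extrema from the phase $n$ times the Abel-type argument. A secondary delicate point is the zero counting at $x=-1$, where the endpoint alternation gives only a one-sided condition.
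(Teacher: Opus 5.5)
Your proposal is correct and is essentially the paper's argument: the paper uses the same ingredients (the triple zero of $1-Z2_n$ at $1$, the $n-1$ alternation points, and the value of $Z2_n'''(1)$, which it obtains from $Z2_n^2-GV2_{n-2}^2=1$ and $Z2_n'=n(x-1)V2_{n-2}$ rather than from the elliptic parametrization) and then runs the same zero count, except that it keeps the normalization $p(1)=1$, so $(1-x)^3p-(1-x)^3\bar{Z2}_n=(1-x)^4r$ with $\deg r\le n-4$, and $r$ is forced to have $n-3$ zeros. In your normalization the equality case $D'''(1)=0$ needs no perturbation, since $D$ then vanishes to order four at $1$, and together with the $n-3$ zeros on $[-1,y_1]$ this already gives $n+1$ zeros.
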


Observe that Theorem \ref{Thm:SixthDerivativeRest} follows from Proposition \ref{(1-x)^3 theorem} exactly as Theorem \ref{Thm:FourthDerivativeRest} follows from Proposition \ref{Prop:SquarePolIneq} (see Section \ref{SectionFourDerivatives}). Hence, the proof of Theorem \ref{Thm:SixthDerivativeRest} will be omitted to avoid repetition, and we shall only focus on proving Proposition \ref{(1-x)^3 theorem}.

Now that we understand better the type of extremal polynomial that optimizes the inequality \eqref{PolynomIneq}, we see that Zolotarev polynomials of the first kind and second type with suitable choices of parameters are the right choice as optimizers of \eqref{PolynomIneq6}, see Figure \ref{fig:2}.
\begin{enumerate}
\item The transformed Zolotarev polynomial $1 - Z2_{n}(x,k_n,1)$ takes the two extremal values 0 and $2$ a total of $n-1$ times. Again, it preserves the equioscillation property as much as possible in the interval $[-1,1]$ for the imposed behavior at $x=1$.
\item  The polynomial $1 - Z2_{n}(x,k_n,1)$ has a triple root at $x=1$, which implies that the function $\bar{Z2}_n(x,k_n,1)$ is a polynomial of degree $n-3$.
\item  The polynomial $\bar{Z2}_n(x,k_n,1)$ is non-negative on $[-1,1]$ and satisfies the normalization $\bar{Z2}_n(1,k_n,1)=1$, because $\frac{Z2_n'''(1,k_n,1)}{3!}= \frac{n^2}{9}\left(\frac{\operatorname{dn}(2a_n)}{1-\operatorname{cn}(2a_n)}\right)^2$.
\end{enumerate}
\begin{figure}[h]
    \centering
\includegraphics[scale=0.5]{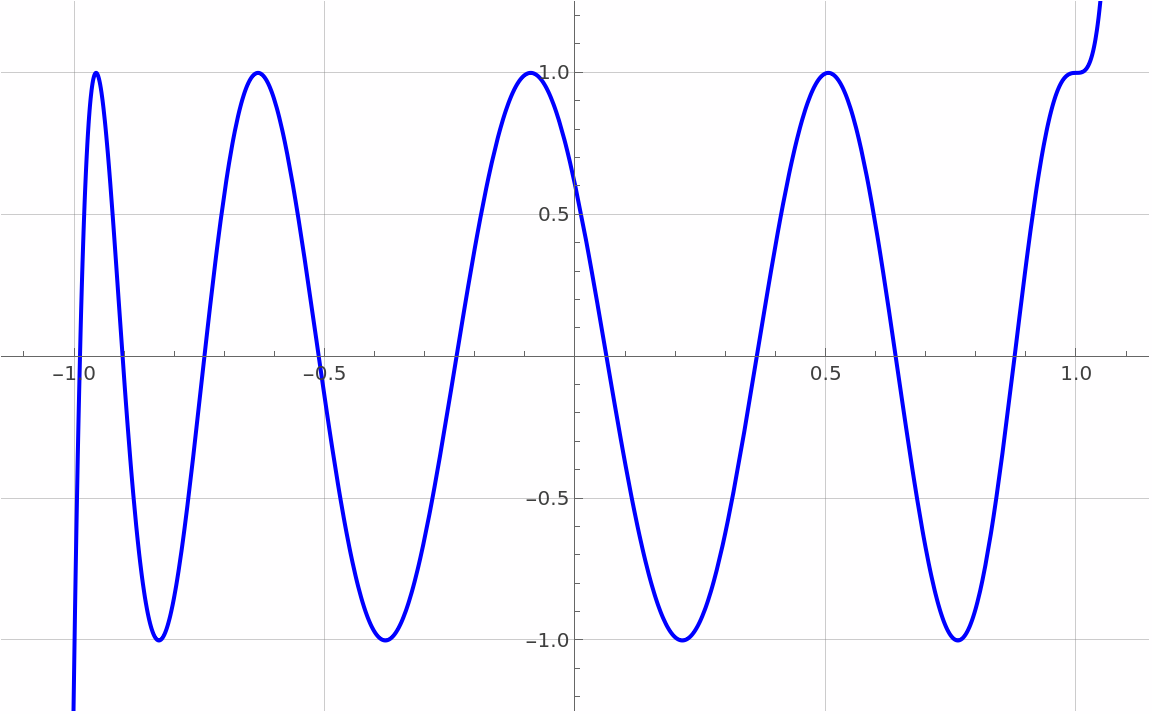}
    \caption{Zolotarev polynomial $Z2_{11}(x,k_{10},1)$, with parameter choices $k_{11}\approx 0.911718$ as in (\ref{c=1}) and $\mu=1$ that will be discussed later.}
    \label{fig:2}
\end{figure}

On the other hand, Kravitz and Steinerberger \cite{KravitzSteinerberger2021} showed that the sharp inequality for one derivative is closely related to the optimal inequality for two derivatives restricted to kernels with non-negative Fourier transform. Their argument suggests that this relation should hold for general $k$ and $2k$ derivatives. By means of Fej\'er-Riesz theorem, we can deduce the exact relation between the corresponding optimal constants.

\begin{theorem}\label{Thm:RelationK-2K}
    Let $\mathcal{F}_{n,1}$ be the family of real-valued symmetric and normalized kernels supported in $\{-n,\dots,n\}$, and let $\mathcal{F}_{n,2}$ be the family of kernels in $\mathcal{F}_{n,1}$ with non-negative Fourier transform. For each $j \in \{1,2\}$, let
    \begin{equation}\label{Cons:Ckn1}
        C_{k,n,j} = \inf_{u \in \mathcal{F}_{n,j}} \sup_{0 \neq f \in \ell^2(\mathbb{Z})} \frac{\|\nabla^k (u\ast f)\|_{\ell^2(\mathbb{Z})}}{\|f\|_{\ell^2(\mathbb{Z})}}.
    \end{equation}
    Then
    \begin{equation*}
        C_{k,n,1}^2=C_{2k,2n,2}.
    \end{equation*}
    Moreover, there exists a unique $u \in \mathcal{F}_{n,1}$ such that
    \begin{equation*}
        C_{k,n,1} = \sup_{0 \neq f \in \ell^2(\mathbb{Z})} \frac{\|\nabla^{k} (u\ast f)\|_{\ell^2(\mathbb{Z})}}{\|f\|_{\ell^2(\mathbb{Z})}},
    \end{equation*}
    and $u \ast u$ is the only kernel in $\mathcal{F}_{2n,2}$ satisfying
    \begin{equation*}
        C_{2k,2n,2} = \sup_{0 \neq f \in \ell^2(\mathbb{Z})} \frac{\|\nabla^{2k} ((u\ast u) \ast f)\|_{\ell^2(\mathbb{Z})}}{\|f\|_{\ell^2(\mathbb{Z})}}.
    \end{equation*}
\end{theorem}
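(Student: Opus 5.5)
We pass to Fourier multipliers on the circle and apply the Fej\'er--Riesz theorem (Lemma \ref{Lemma:FejerRiesz}). By Plancherel, for a kernel $u$ supported in $\{-n,\dots,n\}$ the operator norm in \eqref{Cons:Ckn1} equals $\|(e^{i\xi}-1)^k\widehat u(\xi)\|_{L^\infty(\mathbb{T})}=\|\,|2\sin(\xi/2)|^k\,\widehat u(\xi)\|_{L^\infty(\mathbb T)}$. Here $\widehat u(\xi)=\sum_{|j|\le n}u(j)e^{-ij\xi}$ is a real even trigonometric polynomial of degree at most $n$ with $\widehat u(0)=1$. Thus
\[
C_{k,n,1}=\inf\Bigl\{\|\,|2\sin(\xi/2)|^k P\|_{\infty}:\ P \text{ real, even, } \deg P\le n,\ P(0)=1\Bigr\},
\]
and $C_{2k,2n,2}$ is the same infimum with $k$ replaced by $2k$, $n$ by $2n$, and the additional constraint $P\ge 0$.

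\textbf{Inequality $C_{2k,2n,2}\le C_{k,n,1}^2$.} Given $u\in\mathcal F_{n,1}$, the kernel $v=u\ast u$ is symmetric, supported in $\{-2n,\dots,2n\}$, and satisfies $\sum v=(\sum u)^2=1$ and $\widehat v=\widehat u^{\,2}\ge 0$. Hence $v\in\mathcal F_{2n,2}$, and
\[
\|\,|2\sin(\xi/2)|^{2k}\widehat v\|_\infty=\|\,|2\sin(\xi/2)|^{k}\widehat u\|_\infty^2 .
\]
Taking the infimum over $u$ gives the inequality.

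\textbf{Inequality $C_{2k,2n,2}\ge C_{k,n,1}^2$.} Given $v\in\mathcal F_{2n,2}$, the polynomial $\widehat v$ is nonnegative, of degree at most $2n$, and even. By Fej\'er--Riesz we may write $\widehat v(\xi)=|Q(e^{i\xi})|^2$ with $Q$ an algebraic polynomial of degree at most $2n$. The main obstacle is that we need a \emph{real even} trigonometric polynomial of degree $\le n$, not a polynomial of degree $2n$. The way around it is to take a square root of $\widehat v$ in a different sense.

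Substitute $x=\cos\xi$. Then $\widehat v(\xi)=R(x)$ with $R$ an algebraic polynomial of degree at most $n$ (the degree halves because evenness makes $\widehat v$ a polynomial in $\cos\xi$, and $\cos(j\xi)=T_j(x)$), $R\ge 0$ on $[-1,1]$, and $R(1)=1$. The objective becomes
\[
\|\,(2-2x)^{k}R(x)\|_{L^\infty[-1,1]} .
\]
By the classical Luk\'acs/Fej\'er--Riesz representation of polynomials nonnegative on an interval,
\[
R(x)=A(x)^2+(1-x^2)B(x)^2 .
\]
This alone is not a perfect square, so instead we return to the half-angle variable. Put $\widehat v(\xi)=|Q(e^{i\xi})|^2$ and use $\eta=\xi/2$, so that $\widehat v$ becomes a nonnegative trigonometric polynomial in $\eta$ of degree at most $4n$, even, and $\pi$-periodic. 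Applying Fej\'er--Riesz to it yields $|H(e^{i\eta})|^2$, and $\pi$-periodicity lets us arrange $H(e^{i\eta})=e^{-in\xi}h(e^{i\xi})$ up to the symmetrization.

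I expect the cleanest route to be the following. Choose $\widehat u$ to be the even part, i.e. $\widehat u(\xi)=\mathrm{Re}\bigl(e^{in\xi}\,\overline{\text{normalized }Q}\bigr)$ built from a Fej\'er--Riesz factor $Q$ of degree at most $n$. Then $|\widehat u|\le\sqrt{\widehat v}$ pointwise, and the normalization $\widehat u(0)=1$ is obtained by choosing the phase of $Q$ so that $Q(1)=1$. With this, $\widehat u$ is real, even, of degree at most $n$, and
\[
|2\sin(\xi/2)|^{k}|\widehat u|\le\bigl(|2\sin(\xi/2)|^{2k}\widehat v\bigr)^{1/2}.
\]
This gives $C_{k,n,1}\le C_{2k,2n,2}^{1/2}$. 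Verifying that such a real even $\widehat u$ with $|\widehat u|\le\sqrt{\widehat v}$ exists is the delicate step. I would do it by writing $\widehat v=|Q(e^{i\xi})|^2$ with $\deg Q\le 2n$, noting that $e^{-in\xi}Q(e^{i\xi})$ can be made to have real part an even polynomial of degree $n$, and using $|\mathrm{Re}\,w|\le|w|$.

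\textbf{Existence and uniqueness.} Existence of a minimizer for $C_{k,n,1}$ follows by compactness of the finite-dimensional constraint set after bounding coefficients. For uniqueness, suppose $P_1\ne P_2$ are both optimal. Since the constraint is affine and the objective is convex, $(P_1+P_2)/2$ is also optimal. One then runs an equioscillation (Chebyshev alternation) argument for the weight $|2\sin(\xi/2)|^k$ on the Haar space of even trigonometric polynomials with a fixed value at $0$, which forces $P_1=P_2$.

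For the extremal kernels: equality $C_{2k,2n,2}=C_{k,n,1}^2$ together with the chain of inequalities above forces any optimal $v$ to satisfy $|\widehat u|=\sqrt{\widehat v}$ everywhere, for a $\widehat u$ that is itself optimal. By uniqueness this $\widehat u$ is the minimizer $u$, hence $\widehat v=\widehat u^{\,2}$ and $v=u\ast u$.
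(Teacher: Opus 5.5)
\textbf{What matches the paper, and what differs.} Your proof of $C_{2k,2n,2}\le C_{k,n,1}^2$ is the paper's argument. Your final route to the reverse inequality is correct and genuinely different. The paper passes to the complex kernel built from the centered Fej\'er--Riesz factor and invokes Lemma \ref{Lemma:ComplexKernels}, which itself rests on Richardson's result that non-symmetric real kernels do no better. You instead dominate pointwise, $|\mathrm{Re}\,W|\le |W|=\sqrt{\widehat v}$ with $W(\xi)=e^{-in\xi}Q(e^{i\xi})$, which produces a symmetric real competitor directly. This is more elementary.

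You must, however, supply the step you called delicate. Since $\widehat v$ is even, $\overline{Q(\bar z)}$ is again a factor of $\widehat v$ with no zeros in $\mathbb{D}$. By uniqueness of such a factor up to a unimodular constant, it equals $e^{i\theta}Q(z)$. Evaluating at $z=1$ with $Q(1)=1$ gives $e^{i\theta}=1$, so the coefficients $b_m$ of $Q$ are real. Hence $\mathrm{Re}\,W(\xi)=\sum_m b_m\cos((m-n)\xi)$ is the transform of the symmetric kernel $u(j)=(b_{n+j}+b_{n-j})/2$, which satisfies $\widehat u(0)=1$. (Averaging $\mathrm{Re}\,W(\xi)$ and $\mathrm{Re}\,W(-\xi)$ also works.) Drop the abandoned detours. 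The one through $x=\cos\xi$ is wrong anyway: $R$ has degree up to $2n$, not $n$.

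\textbf{The gap: your last paragraph.} The chain
\[
C_{k,n,1}\le\bigl\|\,|e^{i\xi}-1|^k\widehat u\,\bigr\|_\infty\le\bigl\|\,|e^{i\xi}-1|^{2k}\widehat v\,\bigr\|_\infty^{1/2}=C_{k,n,1}
\]
only yields equality of sup norms. That shows $\widehat u$ is optimal, hence equal to the unique minimizer $\widehat{u^*}$. It says nothing about $\sqrt{\widehat v}-|\widehat u|$ away from the points where $|e^{i\xi}-1|^k|\widehat u|$ attains its maximum. So neither ``$|\widehat u|=\sqrt{\widehat v}$ everywhere'' nor $\widehat v=\widehat u^{\,2}$ follows.

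The paper closes this with a separate equioscillation argument. The function $((1-x)^{k/2}p_{u^*})^2$ takes the value $0$ and its maximum at $2n+1$ interlaced points of $[-1,1)$. The sign and double-root count from Proposition \ref{Prop:SquarePolIneq}, using $p_v\ge 0$, then forces $p_v=p_{u^*}^2$.

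In your setup there is a quicker repair. With $Q$ real, $\mathrm{Im}\,W(\xi)=\sum_m b_m\sin((m-n)\xi)=\sin\xi\,U(\cos\xi)$ with $\deg U\le n-1$, and $\widehat v=\widehat{u^*}^{\,2}+(\mathrm{Im}\,W)^2$. Let $x_j=\cos\xi_j\in[-1,1)$ be the alternation points of $(1-x)^{k/2}p_{u^*}$ from Lemma \ref{lemma:WeightedCheb}. At each of them, $|e^{i\xi_j}-1|^{2k}\widehat v(\xi_j)\le C_{k,n,1}^2=|e^{i\xi_j}-1|^{2k}\widehat{u^*}(\xi_j)^2$, which forces $\mathrm{Im}\,W(\xi_j)=0$. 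At least $n$ of these points lie in $(-1,1)$, so $U$ has $n$ zeros, hence $U\equiv 0$ and $\widehat v=\widehat{u^*}^{\,2}$.

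Either way, you need the alternation property of the minimizer, not just its uniqueness. Your uniqueness sketch for $C_{k,n,1}$ needs it too, and it must be applied on $[-1,1)$ rather than on the closed interval. The weighted approximating space has a forced zero at $x=1$, so it is not Haar there; this is exactly what Lemma \ref{lemma:WeightedCheb} provides.
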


\begin{remark}\label{Rmk:Richardson}
    Notice that the optimal inequality \eqref{Sharp2DerivNoRest} is recovered by combining Theorems \ref{Thm:FourthDerivativeRest} and \ref{Thm:RelationK-2K}. Additionally, we can verify explicitly that the polynomial $S_{2n-2}$ given by (\ref{PolynomS_n}) is the square of the optimal polynomial from \cite[Theorem 3]{Richardson2026}. This can be deduced from the property of the Chebyshev polynomials $T_{2n}(x)=2T_{n}^2(x)-1$ and the identity $\tan(\frac{x}{2})=\frac{\sin(x)}{1+\cos(x)}$.
\end{remark}

As an immediate consequence of Theorems \ref{Thm:SixthDerivativeRest} and \ref{Thm:RelationK-2K}, we deduce the sharp inequality for the case of three derivatives and kernels without any restriction on their Fourier transforms.

\begin{theorem}\label{Thm:ThirdDerivativeNoRest}
    Let $u : \{-n,\dots,n\} \to \mathbb{R}$ be a symmetric function with normalization $\sum_{k=-n}^{n} u(k) = 1$. Then we have the inequality
    \begin{equation} 
        \sup_{0\neq f \in \ell^2(\mathbb{Z})} \frac{\|\nabla^{3}(u \ast f)\|_{\ell^2(\mathbb{Z})}}{\|f\|_{\ell^2(\mathbb{Z})}} \geq \dfrac{2^2\cdot3}{2n+3}\left(\frac{1-\operatorname{cn}(2a_{2n+3})}{\operatorname{dn}(2a_{2n+3})}\right),
    \end{equation}
    where $a_n$ and $k_n$ are defined as in the statement of Theorem \ref{Thm:SixthDerivativeRest}. 
    Moreover, the equality is attained if and only if
    \begin{equation}\label{Kernel:SixthDerivativeRest}
        u(m) = u(-m) = \frac{1}{\pi} \int_{-1}^{1} \sqrt{\bar{Z2}_{2n+3}(x,k_{n+3},1)} T_{m}(x) \frac{dx}{\sqrt{1-x^2}}
    \end{equation}
    for every $m \in \{0,\dots,n\}$, where $T_m$ denotes the Chebyshev polynomial of degree $m$ and $\bar{Z2}_{n}(x,k_n,1)$ is defined as in (\ref{poly6deriv}).
\end{theorem}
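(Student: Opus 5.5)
The plan is to obtain Theorem \ref{Thm:ThirdDerivativeNoRest} as a corollary of Theorem \ref{Thm:RelationK-2K} with $k=3$, combined with Theorem \ref{Thm:SixthDerivativeRest} applied with $n$ replaced by $2n$. Theorem \ref{Thm:RelationK-2K} gives $C_{3,n,1}^2 = C_{6,2n,2}$. Theorem \ref{Thm:SixthDerivativeRest} at support size $2n$ gives
\begin{equation*}
C_{6,2n,2} = \frac{2^4\cdot 3^2}{(2n+3)^2}\left(\frac{1-\operatorname{cn}(2a_{2n+3})}{\operatorname{dn}(2a_{2n+3})}\right)^2 .
\end{equation*}
Here we use that equality there is attained, so the infimum defining $C_{6,2n,2}$ equals the bound. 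Taking square roots gives $C_{3,n,1} = \frac{2^2\cdot 3}{2n+3}\cdot\frac{1-\operatorname{cn}(2a_{2n+3})}{\operatorname{dn}(2a_{2n+3})}$. This requires checking that $1-\operatorname{cn}\geq 0$ and $\operatorname{dn}>0$, which holds for real arguments with $0<k^2<1$.

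For the extremal kernel, Theorem \ref{Thm:RelationK-2K} gives a unique optimal $u\in\mathcal{F}_{n,1}$, and $u\ast u$ is the unique optimizer in $\mathcal{F}_{2n,2}$. By Theorem \ref{Thm:SixthDerivativeRest}, $u\ast u$ has cosine coefficients given by $\bar{Z2}_{2n+3}(\cdot,k_{2n+3},1)$. In the variable $x=\cos\xi$, this means $\widehat{u}(\xi)^2 = \bar{Z2}_{2n+3}(\cos\xi,k_{2n+3},1)$.

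It remains to identify the sign: we must show $\widehat u = +\sqrt{\bar{Z2}_{2n+3}}$ with a polynomial square root. First, $\bar{Z2}_{2n+3}$ is a nonnegative polynomial of degree $2n$. Since it equals $\widehat u(\arccos x)^2$, and $\widehat u(\arccos x)$ is a polynomial $P$ of degree at most $n$ in $x$, we have $\bar{Z2}_{2n+3}=P^2$. Thus $\sqrt{\bar{Z2}_{2n+3}}$ means $\pm P$. The normalization $\widehat u(0)=\sum u(m)=1$ gives $P(1)=1$, which matches $\bar{Z2}_{2n+3}(1)=1$ and fixes the sign. Since $P$ cannot change sign unless it passes through zero, the square root is the one positive at $x=1$.

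The coefficients then follow from the Chebyshev orthogonality formula
\begin{equation*}
u(m)=\frac{1}{\pi}\int_{-1}^1 P(x)T_m(x)\frac{dx}{\sqrt{1-x^2}},
\end{equation*}
which includes the appropriate factor conventions for $m=0$ versus $m\geq1$, mirroring those in Theorem \ref{Thm:FourthDerivativeRest}. The subscripts in \eqref{Kernel:SixthDerivativeRest} should read $k_{2n+3}$, and I would note this typo.

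The main obstacle is justifying that $\bar{Z2}_{2n+3}$ is a perfect square of a real polynomial, with a well-defined choice of branch of the square root. I would not verify this directly from the Zolotarev structure. Instead, I would deduce it abstractly from the uniqueness statement in Theorem \ref{Thm:RelationK-2K}: the optimizer in $\mathcal{F}_{2n,2}$ is of the form $u\ast u$, so its transform is automatically a square.

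Uniqueness of the equality case in Theorem \ref{Thm:ThirdDerivativeNoRest} follows the same way. Any optimal $v$ satisfies that $v\ast v$ is optimal in $\mathcal{F}_{2n,2}$, so $\widehat v^2=\widehat u^2$. Combined with the normalization at $\xi=0$ and the uniqueness clause of Theorem \ref{Thm:RelationK-2K}, this gives $v=u$.
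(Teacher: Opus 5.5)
Your proposal is correct and follows the paper's route exactly: the paper obtains Theorem \ref{Thm:ThirdDerivativeNoRest} as an immediate consequence of Theorem \ref{Thm:RelationK-2K} with $k=3$ together with Theorem \ref{Thm:SixthDerivativeRest} at support $2n$, and your correction of the subscript in the kernel formula to $k_{2n+3}$ is right. The one step to tidy is the sign argument. $P=p_u$ actually changes sign $n$ times in $(-1,1)$, by the equioscillation in Lemma \ref{lemma:WeightedCheb}, so $\sqrt{\bar{Z2}_{2n+3}}$ must mean the polynomial square root singled out by $P(1)=1$, not the pointwise nonnegative root; this choice is well defined because $P^2=Q^2$ in $\mathbb{R}[x]$ forces $P=\pm Q$.
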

\begin{remark}
    Notice that the function $\sqrt{\bar{Z2}_{2n+3}(x,k_{n+3},1)}$ appearing in the optimal kernel (\ref{Kernel:SixthDerivativeRest}) is still a polynomial because $\bar{Z2}_{2n+3}(x,k_{n+3},1)$ is a perfect square, since it is a polynomial of degree $2n$ with $n$ real zeros of multiplicity $2$.
\end{remark}

Taking into account the results obtained in this paper, Table \ref{Table1} summarizes the known optimal constants for measuring smoothness via the $\ell^2$-norm of $\nabla^k$, considering kernels with a non-negative Fourier transform and without any restrictions. For any $k$ not included in Table \ref{Table1}, the problem of determining the optimal inequality remains open.

\begin{table}[h]
\begin{center}
    \begin{tabular}{|c|c|c|}
    \hline
         Differential operator & Non-negative Fourier transform & No restriction\\
         \hline
         $\nabla$ & Open & \cite[Theorem 1]{KravitzSteinerberger2021} \& Proposition \ref{Thm:FirstDerivative} \\
         \hline
         $\nabla^2$ & \cite[Theorem 2]{KravitzSteinerberger2021} \& Proposition \ref{Thm:SecondDerivative} & \cite[Theorem 1]{Richardson2026} \& Remark \ref{Rmk:Richardson}\\
         \hline
         $\nabla^3$ & Open & Theorem \ref{Thm:ThirdDerivativeNoRest} \\
         \hline
         $\nabla^4$ & Theorem \ref{Thm:FourthDerivativeRest} & Open \\
         \hline
         $\nabla^6$ & Theorem \ref{Thm:SixthDerivativeRest} & Open\\
         \hline
    \end{tabular}
\end{center}
\caption{Optimal constants for derivatives of various orders.}
 \label{Table1}
\end{table}
The method developed in Section \ref{SectionSixDerivatives} allows us to find the optimal polynomial that minimizes $\|(1-x)^3p(x)\|_{L^{\infty}([-1,1])}$, which leads to the sharp constant in the case of $\nabla^{6}$. By implementing the same ideas for higher-order even derivatives, one encounters the difficulty that the analogous integral to \eqref{Elliptic integral expanded} has a weight function of the form $1/\sqrt{-G(x)}$ with $G(x)$ being a polynomial of order $N> 4$ (differing from the cases \eqref{weight function}). This problem goes beyond the Chebyshev and Zolotarev families of polynomials since the analytic solutions of the corresponding integral are expressed in terms of hyperelliptic functions.

\subsection{Organization of the paper} In Section \ref{Section1} we prove Propositions \ref{Thm:FirstDerivative} and \ref{Thm:SecondDerivative} by using tools from complex analysis. In Section \ref{SectionFourDerivatives} we show Proposition \ref{Prop:SquarePolIneq} via an equioscillation argument for polynomials, and we reduce Theorem \ref{Thm:FourthDerivativeRest} to Proposition \ref{Prop:SquarePolIneq} by means of Plancherel's theorem. In Section \ref{SectionSixDerivatives} we exploit the theory of Zolotarev polynomials to present a proof of Proposition \ref{(1-x)^3 theorem}, which immediately implies Theorem \ref{Thm:SixthDerivativeRest}. Finally, in Section \ref{SectionK-2K}, we combine Fej\'er-Riesz theorem and the results from \cite[Section 2]{NovelloSchiefermayrZinchenko2021} to deduce Theorem \ref{Thm:RelationK-2K}.

\section{Cases of one and two derivatives revisited} \label{Section1}

By applying Fourier analysis, we shall see that the proofs of \eqref{Constant:Sharp1NoRestrict} and \eqref{Constant:Sharp2Restrict} are closely related to comparing the maximum modulus of both a polynomial and its derivative in the complex unit circle. A well-known inequality due to Bernstein \cite{Bernstein1912} asserts that if $P$ is a polynomial of degree $n$, then $\max_{|z|=1}|P'(z)| \leq n \max_{|z|=1}|P(z)|$. Moreover, this inequality is optimal, and equality is attained if and only if $P(z) = \alpha z^n$ for some constant $\alpha$. In our argument, the constant $n$ in Bernstein's inequality is not good enough to deduce \eqref{Constant:Sharp1NoRestrict}. However, the symmetry of the kernels in Proposition \ref{Thm:FirstDerivative} allows us to use a refined version of Bernstein's inequality. This refined inequality is stated in the next lemma, and its proof may be found in \cite[Theorem 14.3.1]{RahmanSchmeisserBook}.

\begin{lemma} \label{LemmaSelfInversePol}
    Let $f$ be a polynomial of degree at most $N$ such that $z^N \overline{f(1/\overline{z})} \equiv e^{i\gamma} f(z)$ for some $\gamma \in \mathbb{R}$. Then,
    \begin{equation*}
        \|f'\|_{\infty} := \max_{|z|=1}|f'(z)| = \frac{N}{2} \|f\|_{\infty}.
    \end{equation*}
\end{lemma}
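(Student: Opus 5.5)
The plan is to reduce the statement to a fact about a \emph{real} trigonometric polynomial on the unit circle, where both directions of the equality become transparent. Write $z=e^{i\theta}$ and define
\begin{equation*}
    g(\theta) := e^{-i(N\theta+\gamma)/2} f(e^{i\theta}).
\end{equation*}
The first step is to check that $g$ is real-valued. On $|z|=1$ we have $1/\overline{z}=z$, so the hypothesis gives $e^{iN\theta}\overline{f(e^{i\theta})} = e^{i\gamma} f(e^{i\theta})$. Multiplying by $e^{-i(N\theta+\gamma)/2}$ shows that $\overline{g(\theta)} = g(\theta)$.

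Moreover, if $f(z)=\sum_{j=0}^N c_j z^j$, then $g(\theta)=e^{-i\gamma/2}\sum_j c_j e^{i(j-N/2)\theta}$. Hence $g$ is a real trigonometric polynomial of (possibly half-integer) degree $N/2$, and $|g|=|f|$ on the circle, so $\|g\|_\infty=\|f\|_\infty$.

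The second step is to express $f'$ through $g$. Differentiating $f(e^{i\theta}) = e^{i(N\theta+\gamma)/2}g(\theta)$ in $\theta$ gives
\begin{equation*}
    i e^{i\theta} f'(e^{i\theta}) = e^{i(N\theta+\gamma)/2}\Bigl(\tfrac{iN}{2} g(\theta) + g'(\theta)\Bigr).
\end{equation*}
Since $g$ and $g'$ are real, this yields
\begin{equation*}
    |f'(e^{i\theta})|^2 = \tfrac{N^2}{4} g(\theta)^2 + g'(\theta)^2 .
\end{equation*}

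The lower bound $\|f'\|_\infty \geq \tfrac N2\|f\|_\infty$ follows at once. At a point $\theta_0$ where $|g|$ attains its maximum, we have $g'(\theta_0)=0$, so $|f'(e^{i\theta_0})| = \tfrac N2 \|f\|_\infty$.

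For the upper bound I will use the Bernstein--Szeg\H{o} inequality for real trigonometric polynomials: if $h$ is real of degree $m$, then $h'(\theta)^2 + m^2 h(\theta)^2 \leq m^2\|h\|_\infty^2$. To avoid the half-integer degree, set $h(\varphi) := g(2\varphi)$. This is a real trigonometric polynomial of integer degree $N$, since $e^{i(j-N/2)2\varphi}=e^{i(2j-N)\varphi}$. Because $h'(\varphi)=2g'(2\varphi)$ and $\|h\|_\infty=\|g\|_\infty$, the inequality gives $4g'^2 + N^2 g^2 \leq N^2\|g\|_\infty^2$. Substituting into the formula for $|f'|^2$ gives $|f'|^2 \leq \tfrac{N^2}{4}\|f\|_\infty^2$ on the whole circle.

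The only potential obstacle is invoking Bernstein--Szeg\H{o} correctly. If I do not want to cite it, I would prove it directly by the classical comparison argument. Suppose $h'(\varphi_0)^2+N^2h(\varphi_0)^2 > N^2\|h\|^2$ at some point. Choose $\alpha$ so that $\|h\|\cos(N\varphi+\alpha)$ matches $h$ and has the same sign of derivative at $\varphi_0$. Then counting zeros of $\lambda\|h\|\cos(N\cdot+\alpha) - h$ for $\lambda$ slightly greater than $1$ produces more than $2N$ zeros of a nonzero trigonometric polynomial of degree $N$, which is a contradiction. Everything else is routine computation.
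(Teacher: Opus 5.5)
Your argument is correct after one small repair. Since the paper gives no proof of its own (it only cites Theorem 14.3.1 of Rahman--Schmeisser), it is a genuinely self-contained alternative. The repair is a sign. With $g(\theta)=e^{-i(N\theta+\gamma)/2}f(e^{i\theta})$, the hypothesis $e^{iN\theta}\overline{f(e^{i\theta})}=e^{i\gamma}f(e^{i\theta})$ gives $\overline{g}=e^{2i\gamma}g$, so $g$ is real only when $\gamma\in\pi\mathbb{Z}$. This does cover the paper's application, where $\gamma=\pi$, but not the lemma in general. Multiplying the hypothesis by $e^{-i(N\theta+\gamma)/2}$, exactly as you propose, actually shows that $e^{-i(N\theta-\gamma)/2}f(e^{i\theta})$ is real. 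Take this as $g$. Then the identity $|f'(e^{i\theta})|^2=\tfrac{N^2}{4}g(\theta)^2+g'(\theta)^2$, the lower bound at a maximum of $|g|$, and the Bernstein--Szeg\H{o} step for $h(\varphi)=g(2\varphi)$ all go through unchanged.

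A standard complex-analytic proof of the lemma avoids trigonometric polynomials altogether. With $f^*(z)=z^N\overline{f(1/\overline{z})}$, one checks that $|f^{*\prime}(z)|=|Nf(z)-zf'(z)|$ on $|z|=1$, and self-inversiveness gives $|f^{*\prime}|=|f'|$. The lower bound then follows from $N|f(z)|\leq|zf'(z)|+|Nf(z)-zf'(z)|=2|f'(z)|$. The upper bound follows from the inequality $|f'(z)|+|f^{*\prime}(z)|\leq N\|f\|_\infty$ on $|z|=1$, which holds for every polynomial of degree at most $N$.

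Your computation is the real-variable form of this argument. With $\psi=(N\theta-\gamma)/2$ and $z=e^{i\theta}$, one has $zf'(z)=e^{i\psi}(\tfrac N2 g-ig')$ and $Nf(z)-zf'(z)=e^{i\psi}(\tfrac N2 g+ig')$. This is precisely the identity $|zf'|=|Nf-zf'|$, and Bernstein--Szeg\H{o} takes the place of the general complex inequality.

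Your route is more elementary and makes the equality case transparent: it occurs wherever $|g|$ is maximal, since $g'=0$ there. The complex route is more general, because $|f'|+|f^{*\prime}|\leq N\|f\|_\infty$ requires no symmetry. Combined with $|f'|\leq|f^{*\prime}|$ on the circle for polynomials without zeros in $\mathbb{D}$, it also gives the Erd\H{o}s--Lax theorem (Lemma~\ref{Lemma:ErdosLax}) used later in this section.
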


\begin{proof}[Proof of Proposition \ref{Thm:FirstDerivative}]
    Let $u : \{-n,\dots,n\} \to \mathbb{R}$ be a symmetric function with normalization $\sum_{k=-n}^{n}u(k) = 1$. To begin with, we shall prove that
    \begin{equation}\label{Sup1Deriv}
        \sup_{0\neq f \in \ell^2(\mathbb{Z})} \frac{\|\nabla(u \ast f)\|_{\ell^2(\mathbb{Z})}}{\|f\|_{\ell^2(\mathbb{Z})}} = \|\widehat{\nabla u}\|_{L^{\infty}(\mathbb{T})}.
    \end{equation}
    Applying Fourier transform, Plancherel's theorem, and H\"older's inequality, we get
    \begin{equation*}
        \begin{aligned}
            \|\nabla(u\ast f)\|_{\ell^2(\mathbb{Z})} &= \frac{1}{\sqrt{2\pi}}\|((\nabla u) \ast f)^{\wedge}\|_{L^2(\mathbb{T})}\\ &\leq \|\widehat{\nabla u}\|_{L^{\infty}(\mathbb{T})}\left( \frac{1}{\sqrt{2\pi}} \|\widehat{f}\|_{L^2(\mathbb{T})} \right)\\ &= \|\widehat{\nabla u}\|_{L^{\infty}(\mathbb{T})} \|f\|_{\ell^2(\mathbb{Z})}
        \end{aligned}
    \end{equation*}
    for all $f \in \ell^2(\mathbb{Z})$. As a result,
    \begin{equation*}
        \sup_{0\neq f \in \ell^2(\mathbb{Z})} \frac{\|\nabla(u \ast f)\|_{\ell^2(\mathbb{Z})}}{\|f\|_{\ell^2(\mathbb{Z})}} \leq \|\widehat{\nabla u}\|_{L^{\infty}(\mathbb{T})}.
    \end{equation*}
    Now, due to the continuity of $\widehat{\nabla u}$, we can choose $\xi_0 \in \mathbb{T}$ such that $|\widehat{\nabla u}(\xi_0)| = \|\widehat{\nabla u}\|_{L^{\infty}(\mathbb{T})}$. For each $m\geq 3$, let us consider the function $\varphi_m = (2\pi)^{1/2} |B_{1/m}(\xi_0)|^{-1/2} \mathds{1}_{B_{1/m}(\xi_0)}$, where $B_{1/m}(\xi_0)$ denotes the open ball in $\mathbb{T}$ centered at $\xi_0$ with radius $1/m$ and $\mathds{1}_{B_{1/m}(\xi_0)}$ stands for its indicator function. Notice that $\varphi_m \in L^2(\mathbb{T})$ for all $m\geq 3$. Since the Fourier transform is an isomorphism from $\ell^2(\mathbb{Z})$ to $L^2(\mathbb{T})$, for each $m\geq 3$ there exists $f_m \in \ell^2(\mathbb{Z})$ such that $\widehat{f_m} = \varphi_m$. By Plancherel's theorem, we get that $\|f_m\|_{\ell^2(\mathbb{Z})} = (2\pi)^{-1/2}\|\varphi_m\|_{L^2(\mathbb{T})} = 1$ for all $m\geq 3$. Since $|\widehat{\nabla u}|$ is uniformly continuous, we have that
    \begin{equation*}
        \begin{aligned}
            \|\nabla(u\ast f_m)\|_{\ell^2(\mathbb{Z})}^2 &= \frac{1}{2\pi} \int_{\mathbb{T}}|\widehat{\nabla u}(\xi)|^2 |\varphi_m(\xi)|^2 \, d\xi \\ &= \frac{1}{|B_{1/m}(\xi_0)|}\int_{B_{1/m}(\xi_0)}|\widehat{\nabla u}(\xi)|^2\, d\xi \xrightarrow[]{m \to +\infty}|\widehat{\nabla u}(\xi_0)|^2.
        \end{aligned}
    \end{equation*}
    Therefore,
    \begin{equation*}
        \|\widehat{\nabla u}\|_{L^{\infty}(\mathbb{T})} = |\widehat{\nabla u}(\xi_0)| \leq \sup_{0\neq f \in \ell^2(\mathbb{Z})} \frac{\|\nabla(u \ast f)\|_{\ell^2(\mathbb{Z})}}{\|f\|_{\ell^2(\mathbb{Z})}}.
    \end{equation*}
    This establishes \eqref{Sup1Deriv}.
    
    Next, let us consider $b_0 = u(n)$ and $b_k = u(n-k) - u(n-k+1)$ for each $k \in \{1,\dots,n\}$. Additionally, let us consider the polynomial
    \begin{equation*}
        P(z) = b_0 z^{2n+1} + b_1 z^{2n} + \dots + b_n z^{n+1} - b_n z^n - \dots - b_1 z - b_0.
    \end{equation*}
    Since
    \begin{equation*}
        \widehat{\nabla u}(\xi) = (e^{i\xi} - 1) \sum_{k=-n}^n u(k) e^{ik\xi} = e^{-in\xi}P(e^{i\xi}),
    \end{equation*}
    it follows that
    \begin{equation*}
        \sup_{0\neq f \in \ell^2(\mathbb{Z})} \frac{\|\nabla(u \ast f)\|_{\ell^2(\mathbb{Z})}}{\|f\|_{\ell^2(\mathbb{Z})}} = \|\widehat{\nabla u}\|_{L^{\infty}(\mathbb{T})} = \max_{|z|=1} |P(z)|.
    \end{equation*}
    Therefore, it suffices to show that
    \begin{equation} \label{SuffCond}
        \max_{|z|=1} |P(z)| \geq \frac{2}{2n+1}.
    \end{equation}
    Since the coefficients of $P$ are real numbers and have certain symmetry, we get that $P(z) = - z^{2n+1} \overline{P(1/\overline{z})}$. Thus, Lemma \ref{LemmaSelfInversePol} implies
    \begin{equation*}
        \max_{|z|=1} |P(z)| = \frac{2}{2n+1} \max_{|z|=1} |P'(z)|.
    \end{equation*}
    Combining the normalization and symmetry of $u$ with the definition of the $b_k$'s, we have
    \begin{equation*}
        P'(1) = (2n+1) b_0 + (2n-1)b_1 + \dots + 3b_{n-1} + b_n = 1.
    \end{equation*}
    Notice that \eqref{SuffCond} follows from the last two equations, which finishes the proof.
\end{proof}

Notice that we cannot argue exactly as before to deduce \eqref{Constant:Sharp2Restrict} since the corresponding $P'$ in such a case is not a symmetric polynomial, and thus it is not possible to apply Lemma \ref{LemmaSelfInversePol} twice. In addition, we cannot extract a suitable value of $P''$ from the normalization assumption on $u$ to carry out an analogous idea. In this case, we need to represent $\widehat{u}$ in a more convenient way by using the non-negativity hypothesis. The Fej\'er-Riesz theorem \cite[Section 1.12]{GrenanderSzego1958} is the right tool for this task. Moreover, we need to use another refined version of Bernstein's inequality for polynomials with no zeros in the open unit disk $\mathbb{D}$. This inequality will be provided by the Erd\"os-Lax theorem \cite{Lax1944}. We state these two preliminary results below and give a proof of Proposition \ref{Thm:SecondDerivative}.

\begin{lemma}[Fej\'er-Riesz]\label{Lemma:FejerRiesz}
Let $\mathbb{D} = \{z \in \mathbb{C} : |z|<1\}$. A Laurent polynomial $q(z)=\sum_{k=-m}^{m} q_k z^k$ which has complex
coefficients and satisfies $q(\zeta)\ge 0$ for all $\zeta \in \partial \mathbb{D}$ can be written
\begin{equation*}
q(\zeta)=|p(\zeta)|^2, \qquad \zeta\in \partial \mathbb{D},
\end{equation*}
for some polynomial $p(z)=p_0+p_1 z+\cdots + p_m z^m$, and $p(z)$ can be
chosen to have no zeros in $\mathbb{D}$.
\end{lemma}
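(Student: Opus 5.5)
The statement is the classical Fej\'er--Riesz factorization, and I would prove it by factoring the polynomial $z^m q(z)$ and splitting its zeros between the inside and outside of the unit circle.

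\textbf{Setting up.} If $q\equiv 0$, take $p\equiv 0$. Otherwise, since $q$ is real on $\partial\mathbb{D}$, comparing Fourier coefficients of $q(e^{i\theta})$ and $\overline{q(e^{i\theta})}$ gives $q_{-k}=\overline{q_k}$ for all $k$. So if $q_{\pm m}=0$ we may lower $m$; this only produces a $p$ of smaller degree, which is still allowed. Assume then $q_m\neq 0$ and set $R(z)=z^m q(z)$, a polynomial of degree exactly $2m$ with $R(0)=q_{-m}\neq 0$. The coefficient symmetry translates into the self-inversive identity
\[
z^{2m}\overline{R(1/\overline z)}=R(z).
\]
Consequently the zero set of $R$, with multiplicities, is invariant under the reflection $a\mapsto 1/\overline a$. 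In particular the zeros off the circle come in pairs $a$ and $1/\overline a$ with $|a|>1$.

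\textbf{Zeros on the circle.} Let $\zeta_0=e^{i\theta_0}$ be a zero of $R$ on the circle. Then $\theta\mapsto q(e^{i\theta})$ is a real-analytic, non-negative function vanishing at $\theta_0$. Its order of vanishing equals the multiplicity of $\zeta_0$ as a zero of $R$, because $e^{i\theta}-\zeta_0$ vanishes to first order in $\theta$. A non-negative real-analytic function must vanish to even order, since otherwise it would change sign at $\theta_0$. Hence every unimodular zero has even multiplicity $2r$, and we assign $r$ copies of it to each ``half''.

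\textbf{Building $p$.} Collect the zeros of $R$ as $a_1,\dots,a_m$ together with $1/\overline{a_1},\dots,1/\overline{a_m}$, where each $|a_j|\ge 1$. Here the $a_j$ are the outer members of the off-circle pairs plus half of the circle zeros. Then
\[
R(z)=c\prod_{j=1}^m (z-a_j)\bigl(z-1/\overline{a_j}\bigr).
\]
For $|\zeta|=1$ we have $|\zeta-1/\overline a|=|\zeta-a|/|a|$, since $|\overline a\zeta-1|=|\overline{\zeta}\,(\overline a\zeta -1)|=|\overline a-\overline\zeta|=|a-\zeta|$. Therefore
\[
q(\zeta)=\zeta^{-m}R(\zeta)=\lambda(\zeta)\prod_j |\zeta-a_j|^2,\qquad |\lambda(\zeta)|=|c|\prod_j|a_j|^{-1}.
\]
Moreover $\lambda(\zeta)=q(\zeta)/\prod_j|\zeta-a_j|^2$ is real and $\ge 0$ wherever the denominator is nonzero. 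Having constant modulus, it therefore equals the positive constant $C=|c|\prod_j|a_j|^{-1}$ on a dense set, and hence everywhere by continuity.

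Setting $p(z)=\sqrt{C}\prod_{j=1}^m(z-a_j)$ gives $q(\zeta)=|p(\zeta)|^2$ on $\partial\mathbb{D}$. This $p$ has degree $m$ and all its zeros satisfy $|a_j|\ge1$, so $p$ has no zeros in $\mathbb{D}$.

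The only delicate points are the even-multiplicity claim for unimodular zeros and pinning down that the leftover factor $\lambda$ is a positive constant. I expect the first to be the main step; it is handled by the sign-change argument above.
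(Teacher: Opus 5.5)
Your proof is correct: the self-inversive symmetry of $z^m q(z)$, the even multiplicity of unimodular zeros via the sign-change argument, and keeping the zeros with $|a_j|\ge 1$ together make up the classical Fej\'er--Riesz argument. The paper gives no proof of its own and simply cites Grenander--Szeg\"o (Section 1.12), where essentially this same zero-splitting proof appears; note only that the trivial case $q\equiv 0$ is tacitly excluded, since then no $p$ can be zero-free in $\mathbb{D}$.
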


\begin{lemma}[Erd\"os-Lax]\label{Lemma:ErdosLax}
    Let $P(z)$ be a polynomial of degree $n \geq 1$ with no zeros in the open unit disk $\mathbb{D}$. Then, 
\begin{equation*}
    \max_{|z|=1} |P'(z)| \leq \frac{n}{2} \max_{|z|=1} |P(z)|.
\end{equation*}
\end{lemma}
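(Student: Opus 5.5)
The approach is the classical one via the reciprocal polynomial. Set $M=\max_{|z|=1}|P(z)|$ and $Q(z):=z^n\overline{P(1/\overline{z})}$. On $|z|=1$ we have $|Q(z)|=|P(z)|$, and differentiating gives
\[
|Q'(z)|=|nP(z)-zP'(z)| \qquad (|z|=1).
\]
The proof then combines two inequalities on the unit circle:
\[
\text{(A)}\quad |P'(z)|\le |Q'(z)|, \qquad\qquad \text{(B)}\quad |P'(z)|+|Q'(z)|\le nM .
\]
Together they give $2|P'(z)|\le nM$, which is the claim.

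\textbf{Step (A).} Write $P(z)=c\prod_{j}(z-z_j)$ with $|z_j|\ge1$. At a point $|z|=1$ with $P(z)\neq0$,
\[
\operatorname{Re}\frac{zP'(z)}{P(z)}=\sum_j \operatorname{Re}\frac{z}{z-z_j}\le \frac{\deg P}{2}\le \frac n2 .
\]
This uses the elementary fact that $\operatorname{Re}\frac{z}{z-w}\le\frac12$ whenever $|z|=1\le|w|$, which is equivalent to $|w|^2\ge1$. Expanding the square,
\[
|nP-zP'|^2-|zP'|^2=n|P|^2\Bigl(n-2\operatorname{Re}\tfrac{zP'}{P}\Bigr)\ge0,
\]
so $|P'(z)|\le|Q'(z)|$. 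At zeros of $P$ on the circle the inequality follows by continuity. The inequality $\operatorname{Re}(zP'/P)\le n/2$ only needs $\deg P\le n$, so (A) remains valid for polynomials of degree at most $n$ as long as the reciprocal is formed with exponent $n$. I will reuse this in Step (B).

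\textbf{Step (B).} This is the step I expect to be the main obstacle, since it needs a Rouché-type trick. Fix $\lambda$ with $|\lambda|>1$. By the maximum principle $|Q|\le M$ on the closed disk, so $G:=Q-\overline{\lambda}M$ has no zeros in $\overline{\mathbb D}$. The reciprocal of $G$ with exponent $n$ is $G^*(z)=z^n\overline{G(1/\overline z)}=P(z)-\lambda Mz^n$. Applying (A) to $G$ (degree at most $n$, no zeros in $\mathbb D$) gives, on $|z|=1$,
\[
|Q'(z)|\le |P'(z)-\lambda nMz^{n-1}| .
\]
Now fix $z$ on the circle and choose $\arg\lambda$ so that $\lambda z^{n-1}$ has the same argument as $P'(z)$. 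By Bernstein's inequality $|P'(z)|\le nM<|\lambda|nM$, hence the right-hand side equals $|\lambda|nM-|P'(z)|$. Letting $|\lambda|\to1^+$ yields (B).

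Combining (A) and (B) gives $2|P'(z)|\le |P'(z)|+|Q'(z)|\le nM$ for every $|z|=1$, which proves the lemma.
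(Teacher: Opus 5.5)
Your proof is correct. The paper does not prove this lemma: it states it as a known result and cites Lax's 1944 paper, so there is no in-paper argument to compare with. What you give is a complete, self-contained proof along the classical reciprocal-polynomial lines. It uses nothing beyond the maximum principle and Bernstein's inequality, which the paper already quotes at the start of Section~\ref{Section1}.

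The details check out:
\begin{itemize}
\item the identity $|Q'(z)|=|nP(z)-zP'(z)|$ on $|z|=1$;
\item the bound $\operatorname{Re}\frac{z}{z-w}\le\frac12$ for $|z|=1\le|w|$;
\item the fact that (A) still holds for degree at most $n$ when the reciprocal is taken with exponent $n$;
\item the computation $G^*(z)=P(z)-\lambda Mz^n$;
\item the pointwise choice of $\arg\lambda$, which is legitimate because the inequality from (A) holds for every $|\lambda|>1$ and every $z$ on the circle.
\end{itemize}

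Compared with a bare citation, your route also shows where the hypothesis is actually used. The zero hypothesis enters only through (A). Step (B) holds for every polynomial of degree at most $n$, because the zero-freeness of $G=Q-\overline{\lambda}M$ comes from the maximum principle alone.

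Consequently, let $f$ be self-inversive as in Lemma~\ref{LemmaSelfInversePol}, with reciprocal $f^*(z)=z^N\overline{f(1/\overline{z})}=e^{i\gamma}f(z)$. Then $|(f^*)'|=|f'|$ on the circle, and (B) immediately gives $\max_{|z|=1}|f'(z)|\le\frac N2\max_{|z|=1}|f(z)|$. This is exactly the direction of that lemma used in the proof of Proposition~\ref{Thm:FirstDerivative}. So one argument covers both refined Bernstein inequalities of Section~\ref{Section1}, at the cost of about a page that the authors saved by citing.
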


\begin{proof}[Proof of Proposition \ref{Thm:SecondDerivative}]
    Arguing as in the proof of \eqref{Sup1Deriv}, we have
    \begin{equation}\label{supLap}
        \sup_{0\neq f \in \ell^2(\mathbb{Z})} \frac{\|\nabla^2(u \ast f)\|_{\ell^2(\mathbb{Z})}}{\|f\|_{\ell^2(\mathbb{Z})}} = \|\widehat{\nabla^2 u}\|_{L^{\infty}(\mathbb{T})} = \max_{\xi \in \mathbb{T}}|(e^{i\xi}-1)^2 \widehat{u}(\xi)|.
    \end{equation}
    Since $\widehat{u}\geq 0$, an application of Fej\'er-Riesz theorem yields the existence of a polynomial
    \begin{equation*}
        Q(z) = \sum_{k=0}^n b_k z^k
    \end{equation*}
    with no zeros in $\mathbb{D}$ such that
    \begin{equation} \label{FourierDecom}
        \widehat{u}(\xi) = |Q(e^{i\xi})|^2
    \end{equation}
    for all $\xi \in \mathbb{T}$. Let us consider the polynomial $P(z) = (z-1)Q(z)$. From \eqref{supLap} and \eqref{FourierDecom}, it follows that 
    \begin{equation} \label{maxP^2}
        \sup_{0\neq f \in \ell^2(\mathbb{Z})} \frac{\|\nabla^2(u \ast f)\|_{\ell^2(\mathbb{Z})}}{\|f\|_{\ell^2(\mathbb{Z})}} = \left( \max_{|z|=1} |(z-1)Q(z)|\right)^2. 
    \end{equation}
    Notice that the normalization condition on $u$ implies that $\widehat{u}(0)=1$, and thus $|Q(1)|=1$. By applying the product rule for derivatives, we deduce that $|P'(1)|=1$. Since $P$ has no zeros in $\mathbb{D}$, Erd\"os-Lax theorem implies that
    \begin{equation*}
        1 \leq \max_{|z|=1}|P'(z)| \leq \frac{n+1}{2} \max_{|z|=1}|P(z)|.
    \end{equation*}
    Combining this inequality with \eqref{maxP^2}, we conclude
    \begin{equation*}
        \sup_{0\neq f \in \ell^2(\mathbb{Z})} \frac{\|\nabla^2(u \ast f)\|_{\ell^2(\mathbb{Z})}}{\|f\|_{\ell^2(\mathbb{Z})}} \geq \frac{4}{(n+1)^2}.
    \end{equation*}
\end{proof}

\section{Case of four derivatives with non-negative Fourier transform restriction} \label{SectionFourDerivatives}

We start this section by establishing Proposition \ref{Prop:SquarePolIneq} via an equioscillation argument, similar to that used to deduce the extremal properties of Chebyshev polynomials. Then we proceed with the proof of Theorem \ref{Thm:FourthDerivativeRest}, which can be seen as an immediate consequence of Proposition \ref{Prop:SquarePolIneq} after applying the Fourier transform.

\begin{proof}[Proof of Proposition \ref{Prop:SquarePolIneq}]
    To begin with, we shall show that $S_{n-2}$ satisfies the hypotheses of the proposition. Let us consider
    \begin{equation*}
        q(x) = (1-x)^2 S_{n-2}(x) = \frac{8}{n^2} \tan^2 \left( \frac{\pi}{2n} \right) \left( 1 + T_n\left( \frac{1+\cos(\pi/n)}{2}(x+1)-1 \right) \right).
    \end{equation*}
    Letting 
    \begin{equation*}
        \alpha =  \frac{8}{n^2} \tan^2 \left( \frac{\pi}{2n} \right) \quad \text{and} \quad L(x) = \frac{1+\cos(\pi/n)}{2}(x+1)-1,
    \end{equation*}
    we have $q(x) = \alpha (1 + T_n(L(x)))$. This implies that $q$ is a non-negative polynomial of degree $n$. Since $T_n(\cos(\pi/n))=-1$ and $T_n'(\cos(\pi/n))=0$, it follows that $q(1)=q'(1)=0$. This means that $(1-x)^2$ is a factor of $q$, and thus $S_{n-2}$ is a non-negative polynomial on $[-1,1]$ of degree $n-2$. Moreover, 
    \begin{equation*}
        S_{n-2}(1) = \frac{1}{2}q''(1) = \frac{\alpha}{2} (L'(1))^2 T_n''(\cos(\pi/n)) = \frac{\alpha}{2} \left( \frac{1+\cos(\pi/n)}{2} \frac{n}{\sin(\pi/n)} \right)^2 = 1.
    \end{equation*}
    Notice that equality in \eqref{PolynomIneq} is attained for $S_{n-2}$, because
    \begin{equation*}
        \max_{x \in [-1,1]}|(1-x)^2 S_{n-2}(x)| = \max_{x \in [-1,1]} |q(x)| = 2 \alpha = \frac{16}{n^2} \tan^2 \left( \frac{\pi}{2n} \right).
    \end{equation*}
    Next, let $p$ be a polynomial of degree at most $n-2$ that is non-negative on $[-1,1]$ such that $p(1)=1$ and
    \begin{equation} \label{HypP}
        \max_{x \in [-1,1]}|(1-x)^2 p (x)| \leq \frac{16}{n^2} \tan^2 \left( \frac{\pi}{2n} \right).
    \end{equation}
    We shall prove that $p = S_{n-2}$. Since $p(1) = S_{n-2}(1) = 1$, there exists a polynomial $r$ of degree at most $n-3$ such that 
    \begin{equation*}
        p(x) - S_{n-2}(x) = (1-x)r(x).
    \end{equation*}
    Hence,
    \begin{equation}\label{SignR}
        (1-x)^2p(x) - q(x) = (1-x)^3 r(x).
    \end{equation}
    If $y \in [-1,1)$ is such that $q(y) = 0$, then the non-negativity of $p$ on $[-1,1]$ and \eqref{SignR} imply that $r(y) \geq 0$. If $z \in [-1,1)$ is such that $q(z)=2\alpha$, then \eqref{HypP} and \eqref{SignR} yield that $r(z) \leq 0$. Observe that there are $n-1$ interlaced points in the interval $[-1,1)$ satisfying exactly one of the above two conditions, say $-1 = x_1 < x_2 < \dots < x_{n-1} < 1$. By the intermediate value theorem, there exist $y_1, \dots, y_{n-2} \in [-1,1)$ such that $y_i \in [x_i,x_{i+1}]$ and $r(y_i) = 0$ for each $i \in \{1,\dots,n-2\}$. If $y_{i} = x_{i+1} = y_{i+1} $ for some $i \in \{1,\dots,n-2\}$, the fact that $0 \leq (1-x)^2 p(x) \leq 2 \alpha$ in $(-1,1)$ implies that $y_i$ is a critical point of $(1-x)^2p(x)$. Taking derivative in \eqref{SignR}, we deduce that $y_i$ is a critical point of $(1-x)^3r(x)$. Since $r(y_i) = 0$ and $y_{i} = x_{i+1} \neq 1$, it follows that $r'(y_i) = 0$. This means that $y_i$ is a double root of $r$ whenever $y_i = x_{i+1} = y_{i+1}$. As a result, $r$ has $n-2$ roots in $[-1,1)$ counting multiplicities. Hence, $r \equiv 0$. This finishes the proof.
\end{proof}

\begin{proof}[Proof of Theorem \ref{Thm:FourthDerivativeRest}]
    Let $u : \{-n,\dots,n \} \to \mathbb{R}$ be a symmetric kernel with non-negative Fourier transform such that $\sum_{k=-n}^{n}u(k) = 1$. Because of the symmetry of $u$, we get
    \begin{equation*}
        \widehat{u}(\xi) = \sum_{k=-n}^{n} u(k) e^{-ik\xi} = u(0) + 2 \sum_{k=1}^{n}u(k) \cos(k\xi) = u(0) + \sum_{k=1}^{n} 2u(k) T_k(\cos(\xi)),
    \end{equation*}
    where $T_k$ denotes the Chebyshev polynomial of degree $k$. By applying Fourier transform and Plancherel's theorem as in \eqref{Sup1Deriv} together with the change of variable $x = \cos(\xi)$, we deduce
    \begin{equation} \label{SupFourDerivatives}
        \sup_{0\neq f \in \ell^2(\mathbb{Z})} \frac{\|\nabla^4(u\ast f)\|_{\ell^2(\mathbb{Z})}}{\|f\|_{\ell^2(\mathbb{Z})}} = \| (e^{i\xi}-1)^4 \widehat{u}(\xi)\|_{L^{\infty}(\mathbb{T})} = 4 \|(1-x)^2 p_u(x)\|_{L^{\infty}([-1,1])},
    \end{equation}
    where 
    \begin{equation}\label{PolV}
        p_u(x) := u(0) + \sum_{k=1}^{n} 2u(k) T_k(x).
    \end{equation}
    Since $p_u$ is a polynomial of degree at most $n$ that is non-negative on $[-1,1]$, Proposition \ref{Prop:SquarePolIneq} yields 
    \begin{equation}\label{Max(1-x)^2p}
        \|(1-x)^2 p_u(x)\|_{L^{\infty}([-1,1])} \geq \frac{16}{(n+2)^2} \tan^2\left( \frac{\pi}{2n + 4} \right).
    \end{equation}
    Combining \eqref{SupFourDerivatives} and \eqref{Max(1-x)^2p}, we obtain
    \begin{equation}\label{Sharp4DerivNew}
        \sup_{0\neq f \in \ell^2(\mathbb{Z})} \frac{\|\nabla^4(u\ast f)\|_{\ell^2(\mathbb{Z})}}{\|f\|_{\ell^2(\mathbb{Z})}} \geq \frac{64}{(n+2)^2} \tan^2\left( \frac{\pi}{2n + 4} \right).
    \end{equation}
    We shall prove that equality in \eqref{Sharp4DerivNew} is only attained for the kernel defined in \eqref{OptimalKernel4Deriv}.
    
    Let $S_n$ be the extremal polynomial of Proposition \ref{Prop:SquarePolIneq}, which was defined in \eqref{PolynomS_n}. Since the set of Chebyshev polynomials $\{T_0=1,T_1,\dots,T_n\}$ is a basis of the vector space of real-valued polynomials of degree at most $n$, there exist unique constants $\alpha_0,\dots,\alpha_n \in \mathbb{R}$ such that
    \begin{equation} \label{RepresentationSn}
        S_n(x) = \alpha_0 T_0(x) + \sum_{k=1}^n 2\alpha_k T_k(x).
    \end{equation}
    Since $S_n(1)=1$ and $T_k(1)=1$ for every $k \in \{0,\dots,n\}$, letting $x=1$ in the above equation we get 
    \begin{equation}\label{NormalizedV}
        \alpha_0 +  \sum_{k=1}^n 2\alpha_k = 1.
    \end{equation}
    In addition, we know that Chebyshev polynomials satisfy the orthogonality property
    \begin{equation*}
        \int_{-1}^{1}T_{m_1}(x)T_{m_2}(x) \frac{dx}{\sqrt{1-x^2}} = \begin{cases}
            0, &\text{if } m_1 \neq m_2,\\
            \pi, &\text{if } m_1 = m_2 = 0,\\
            \pi/2, &\text{if } m_1 = m_2 \neq 0.
        \end{cases}
    \end{equation*}
    Applying this orthogonality in \eqref{RepresentationSn}, we have
    \begin{equation*}
        \alpha_k = \frac{1}{\pi}\int_{-1}^{1}S_n(x)T_{k}(x)\frac{dx}{\sqrt{1-x^2}}
    \end{equation*}
    for each $k \in \{0,\dots,n\}$.
    
    Let us consider the symmetric kernel $v:\{-n,\dots,n\} \to \mathbb{R}$ defined in \eqref{OptimalKernel4Deriv}. It follows that $v(k) = v(-k) = \alpha_k$ for all $k \in \{0,\dots,n\}$, and thus $v$ is a normalized symmetric kernel by \eqref{NormalizedV}. Moreover, the polynomial $p_v$ (as defined in \eqref{PolV}) associated to $v$ is given by 
    \begin{equation*}
        p_v(x) = v(0) + \sum_{k=1}^{n} 2v(k) T_k(x) = \alpha_0 T_0(x) + \sum_{k=1}^n 2\alpha_k T_k(x) = S_n(x).
    \end{equation*}
    This implies that $v$ has non-negative Fourier transform. Additionally, by Proposition \ref{Prop:SquarePolIneq}, we deduce
    \begin{equation*}
        \sup_{0\neq f \in \ell^2(\mathbb{Z})} \frac{\|\nabla^4(v\ast f)\|_{\ell^2(\mathbb{Z})}}{\|f\|_{\ell^2(\mathbb{Z})}} = 4 \|(1-x)^2 S_n(x)\|_{L^{\infty}([-1,1])} = \frac{64}{(n+2)^2} \tan^2\left( \frac{\pi}{2n + 4} \right).
    \end{equation*}
    This shows that equality in \eqref{Sharp4DerivNew} is attained by the kernel defined in \eqref{OptimalKernel4Deriv}, which proves that such an inequality is optimal.

    Now, let $w:\{-n,\dots,n\} \to \mathbb{R}$ be a normalized symmetric kernel  with non-negative Fourier transform that attains equality in \eqref{Sharp4DerivNew}. This implies that
    \begin{equation*}
        \|(1-x)^2 p_w(x)\|_{L^{\infty}([-1,1])} = \frac{16}{(n+2)^2} \tan^2\left( \frac{\pi}{2n + 4} \right).
    \end{equation*}
    By Proposition \ref{Prop:SquarePolIneq}, we deduce that $p_w(x) = S_n(x)$. That is,
    \begin{equation*}
        w(0) + \sum_{k=1}^{n} 2w(k) T_k(x) = S_n(x).
    \end{equation*}
    Applying once again the orthogonality property of Chebyshev polynomials, we get  
    \begin{equation*}
        w(k) = \frac{1}{\pi}\int_{-1}^{1}S_n(x)T_{k}(x)\frac{dx}{\sqrt{1-x^2}} 
    \end{equation*}
    for every $k \in \{0,\dots,n\}$. This shows that there is no normalized symmetric kernel with non-negative Fourier transform different from that of \eqref{OptimalKernel4Deriv} for which equality is attained in \eqref{Sharp4DerivNew}.
\end{proof}

\section{Case of six derivatives with non-negative Fourier transform restriction} \label{SectionSixDerivatives}

\subsection{Preliminaries: Construction and properties of the Zolotarev polynomials of the first kind and second type}
In several works spanning from 1868 to 1877, Zolotarev studied the relationship between elliptic functions and functions of a complex variable, developing an algorithm to identify whether an integral of the form 
\begin{equation}\label{Integral Elliptic}
  \int\dfrac{A-x}{\sqrt{x^4+ax^3+bx^2+cx+d}} \ dx 
\end{equation}
was elementary, in this case represented in terms of logarithms. Zolotarev's method improved previous works of Abel by obtaining an algorithm with real practical use. This led to developing a method of constructing two families of polynomials, expressed explicitly in terms of elliptic functions, with an alternating behavior and with applications to least deviation from zero problems. Lebedev \cite{Levedev1994} studied the properties of the Zolotarev polynomials of the first kind and second type (among other types) and applied them to solve some extremal problems in which the set of polynomials is constrained by some additional requirements. One of the problems solved by Lebedev is related to Proposition \ref{(1-x)^3 theorem}. Here, we will summarize the method of constructing this family of polynomials presented in \cite[Section 1 and 2]{Levedev1994}, specially for the case of Zolotarev polynomials of the first kind and second type, and will go through the proof of Proposition \ref{(1-x)^3 theorem}, which is our main interest in this paper.

Let $\Pi_n$ be the class of monic polynomials of degree $n$. We aim to find the extremal polynomial that solves the problem $$\inf_{Q_n\in\Pi_n} \max_{x\in[-1,1]}|Q_n(x)\rho(x)|$$
for some weight function of the form $\rho(x)=\left(R_M(x)/S_L(x)\right)^{1/2}$, where $R_M$ and $S_L$ are polynomials of degrees $M$ and $L$, respectively, and $M$ and $L$ satisfy $2n + M > L$. Let $P_n \in \Pi_n$, and set 
\begin{equation}\label{set F_n}
F_n(x)=P_n(x)\rho(x)  
\end{equation}
be such that there exist real values $x_1> x_2 > \dots >x_l$ for which the equalities 
\begin{equation}\label{EqFnLn}
    \begin{aligned}
        F_n^2(x)-L_n^2=0\\
    F'_n(x)=0
    \end{aligned}
\end{equation}
hold simultaneously for some $L_n>0$ at each $x=x_i$, and $F_n^2(x)\leq L_n^2$ for all $x\in[-1,1]$. If we let $\displaystyle \varphi_l(x)=\Pi_{i=1}^l(x-x_i)$, then \eqref{EqFnLn} can be rewritten as
\begin{equation}\label{Fn2-Ln2}
  F_n^2(x)-G(x)\varphi_l^2(x)=L_n^2 
\end{equation}
for some function $G(x)$ that will be mentioned later and satisfies $G(x) \leq 0$ for all $x \in [-1,1]$. This is called the \textit{Fermat-Abel functional equation}. A method for finding extremal polynomials was developed by Chebyshev, Lebedev presents a generalized form \cite[p. 233]{Levedev1994}, and we expose this method here. Observe that for the derivative $F_n'$, the following equality is valid by (\ref{set F_n}) and $(\ref{Fn2-Ln2})$
\begin{equation}\label{derivative F_n equation}
    F_n'(x)=\dfrac{ t \cdot  Q_{\bar{p}}(x)\varphi_l(x)}{R_M^{1/2}(x)S_L^{3/2}(x)}
\end{equation}
where $t$ is some constant and
\begin{equation*}
    t \cdot  Q_{\bar{p}}(x)\varphi_l(x)=R_M(x) S_L(x) P_n'(x) + \frac{1}{2} \left( \dfrac{R_M}{S_L}\right)'(x) \cdot S_L^2(x) P_n(x).
\end{equation*}
Then (\ref{Fn2-Ln2}) and (\ref{derivative F_n equation}) give us the so-called \textit{Chebyshev differential equation}
\begin{equation}\label{Chebyshev's diff equations}
    \frac{F_n'}{\sqrt{L_n^2-F_n^2}}= \frac{t \cdot  Q_{\bar{p}}(x)}{S_L(x)\sqrt{-G(x)}} \quad \text{or} \quad \frac{F_n'}{\sqrt{F_n^2-L_n^2}}= \frac{t \cdot  Q_{\bar{p}}(x)}{S_L(x)\sqrt{G(x)}}.
\end{equation}
It is considered that $\sqrt{G(x)}$ corresponds to the principal branch of the complex square root function. Analytical solutions to the Chebyshev's differential equation can be derived from the function 
\begin{equation}\label{Elliptic integral G}
    I(x,x_0)=-\int_{x_0}^x \dfrac{s\cdot Q_{\bar{p}}(x)}{S_L(x)\sqrt{-G(x)}} \, dx,
\end{equation}
where $s=t/r$ and $r$ is the coefficient of the leading term of $R_M(x)$. For the particular case of Zolotarev polynomials of the first kind, we choose $S_L(x)\equiv 1$ and $G(x)$ to be a quartic polynomial that has the form
\begin{equation}\label{weight function}
    G(x)=(x^2-1)(x-\alpha)(x-\beta) \ \text{ or } \ G(x)=(x^2-1)((x-\gamma)^2+\varepsilon^2),
\end{equation}
see \cite[Section 2 and the discussion at the end of p. 239]{Levedev1994}. 

The first option implies that $G(x)$ has four different real roots and gives rise to the Zolotarev polynomials of the first kind and the first type, whereas the second option is when  $G(x)$ has two real roots and two complex conjugate roots, this one gives us the Zolotarev polynomials of the first kind and the second type, which is the family that we are interested in. Thus we proceed by showing the construction of such polynomials as in \cite[p. 249]{Levedev1994}. From now on, we consider $G(x)=(x^2-1)((x-\gamma)^2+\varepsilon^2)$ and $Q_{\bar{p}}(x) = x-c$. Therefore, \eqref{Chebyshev's diff equations} and \eqref{Elliptic integral G} become
\begin{equation}\label{arccos int}
    \frac{P_n'}{\sqrt{L_n^2-P_n^2}}= \frac{n(x-c)}{\sqrt{-G(x)}}
\end{equation}
and
\begin{equation}\label{Elliptic integral expanded}
    I(x,x_0)=\int_{x_0}^x \dfrac{x-c}{\sqrt{(1-x^2)((x-\gamma)^2+\varepsilon^2)}} \, dx.
\end{equation} 
Notice that the left-hand side of \eqref{arccos int} equals $\dfrac{d}{dx}\left(\arccos\left(\dfrac{P_n(x)}{L_n}\right)\right)$, then by using the antiderivative \eqref{Elliptic integral expanded} and the fact that $P_n(1)=L_n$ (which follows from \eqref{Fn2-Ln2}), we conclude
\begin{equation}\label{arccos of Z2_n}
  \arccos\left(\dfrac{P_n(x)}{L_n}\right)=nI(x,1).
\end{equation}
Let $1\leq \mu\leq n/2$. One can get the value $nI(1,-1)=(n-2\mu)\pi$ by setting the parameters
\begin{equation*}
a=\dfrac{\mu K(k)}{n},    
\end{equation*}
\begin{equation}\label{gamma and epsilon}
 \gamma=\dfrac{\operatorname{cn}(2a)}{\operatorname{dn}^2(2a)}, \quad \varepsilon=\dfrac{kk'\operatorname{sn}^2(2a)}{\operatorname{dn}^2(2a)}, 
\end{equation}
and 
\begin{equation}\label{defC}
 c=1+\dfrac{1}{\operatorname{dn}(2a)}   (\operatorname{cn}(2a)-1+2\operatorname{sn}(2a)Z(a)).
\end{equation}
Moreover, the solution to the Chebyshev's differential equation \eqref{arccos int} exists and is the monic polynomial
\begin{equation*}
    P_n(x)=L_nZ2_n(x,k,\mu)
\end{equation*}
where
\begin{equation*}
    L_n = \dfrac{1}{2^{n-1}}\left( \dfrac{\Theta_3(0)}{\Theta_4(2a_n)} \right),
\end{equation*}
\begin{equation*}
Z2_n(x,k,\mu)=\dfrac{1}{2}\left[\left(\dfrac{\Theta_1(a+u) \Theta_3(a+u) }{\Theta_1(a-u) \Theta_3(a-u)}\right)^n+\left(\dfrac{\Theta_1(a-u) \Theta_3(a-u)}{\Theta_1(a+u) \Theta_3(a+u) }\right)^n \ \right],   
\end{equation*}
and the variable is
\begin{equation*}\label{definition variable x}
    x=\dfrac{\operatorname{cn}(2a)\operatorname{cn}(2u)-1}{\operatorname{cn}(2u)-\operatorname{cn}(2a)},
\end{equation*}
with $u$ ranging over $[0,K(k')i]$ when $x$ ranges over $[-1,1]$. Here, $k'$ denotes the \textit{complementary modulus} $k'=\sqrt{1-k^2}$. 
The polynomial $Z2_n(x,k,\mu)$ of degree $n$ is called the \textit{Zolotarev polynomial of the
first kind and second type}. 

It is convenient to take $c, k, u, \mu$ and $a$  as parameters in extremal problems for the Zolotarev polynomials. The geometric interpretation of the parameters is as follows: For $c<1$, the polynomial $Z2_n(x,k,\mu)$ has $n-2\mu+1$ points of equioscillation taking the values $-1$ and $1$ on the interval $[-1,c)$, and $2\mu-1$ of these points on $(c,1]$. The value $c=1$ is the extreme case, which can only occur when $\mu=1$, and in such case we have exactly $n-1$ points of equioscillation between $-L_n$ and $L_n$ on the interval $[-1,1]$, see Figure \ref{fig:2}.

Similarly, the following 
\begin{equation*}
 V2_{n-2}(x,k,\mu):=\dfrac{1}{2\sqrt{G(x)}}\left[\left(\dfrac{\Theta_1(a+u) \Theta_3(a+u) }{\Theta_1(a-u) \Theta_3(a-u)}\right)^n-\left(\dfrac{\Theta_1(a-u) \Theta_3(a-u)}{\Theta_1(a+u) \Theta_3(a+u) }\right)^n \ \right]
\end{equation*}
defines the polynomial $V2_{n-2}(x,k,\mu)$ of degree $n-2$ that is called the \textit{Zolotarev polynomial of the
second kind and second type}.

Next, we mention some properties of the polynomials $Z2_n(x,k,\mu)$ and $V2_{n}(x,k,\mu)$ that will be useful to solve the extremal problem from Proposition \ref{(1-x)^3 theorem}. The proof of these properties may be found in \cite[Lemma 2.8]{Levedev1994}.

\begin{lemma}\label{properties Z2_n}
 If $\mu$ satisfies $1\leq\mu \leq\frac{n}{2}$, then
\begin{equation}\label{MaxZ2n}
 \max_{x\in[-1,1]}|Z2_n(x,k,\mu)|=1  ,
\end{equation}
 \begin{equation*}
 \max_{x\in[-1,1]*}|\sqrt{-G(x)}V2_{n-2}(x,k,\mu)|=1,   
\end{equation*}
 \begin{equation}\label{second derivative is zero when c=1}
Z2_n(x,k,\mu)^2-G(x)V2_{n-2}(x,k,\mu)^2=1   ,
\end{equation}
 \begin{equation}\label{first derivative is zero at c}
Z2_n'(x,k,\mu)=n(x-c)V2_{n-2}(x,k,\mu).   
\end{equation}
 
\end{lemma}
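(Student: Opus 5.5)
The plan is to work directly from the theta-function parametrization rather than from the Fermat–Abel equation. Set
\begin{equation*}
w(u)=\frac{\Theta_1(a+u)\Theta_3(a+u)}{\Theta_1(a-u)\Theta_3(a-u)},
\end{equation*}
so that by definition $Z2_n=\tfrac12(w^n+w^{-n})$ and $\sqrt{G}\,V2_{n-2}=\tfrac12(w^n-w^{-n})$, with
\begin{equation*}
x=\frac{\operatorname{cn}(2a)\operatorname{cn}(2u)-1}{\operatorname{cn}(2u)-\operatorname{cn}(2a)}.
\end{equation*}
The first step is to verify that these objects really are polynomials in $x$ of degrees $n$ and $n-2$. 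Using the quasi-periodicity of $\Theta_1,\Theta_3$ under $u\mapsto u+2K$ and $u\mapsto u+2iK'$, one checks that $w^n+w^{-n}$ is an elliptic function of $u$. It is also invariant under $u\mapsto -u$, since $w(-u)=1/w(u)$. Hence it is a rational function of $\operatorname{cn}(2u)$, and therefore of $x$. Its only poles in a period cell are at $u=\pm a$ (the zeros of $\Theta_1(a\mp u)$), which correspond exactly to $x=\infty$. The pole order there is $n$ in the local parameter $1/x$, so $Z2_n$ is a polynomial of degree $n$.

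The odd combination $w^n-w^{-n}$ changes sign under $u\mapsto -u$. Dividing by $\sqrt{G(x)}$, which is itself an odd elliptic function of $u$ (essentially $\operatorname{sn}(2u)\operatorname{dn}(2u)$ times a rational function of $\operatorname{cn}(2u)$, after using the formulas \eqref{gamma and epsilon} for $\gamma,\varepsilon$), gives an even elliptic function. Counting poles at $u=\pm a$ then shows that $V2_{n-2}$ is a polynomial of degree $n-2$.

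With this in hand, the identity \eqref{second derivative is zero when c=1} is immediate:
\begin{equation*}
Z2_n^2-G\,V2_{n-2}^2=\tfrac14\big[(w^n+w^{-n})^2-(w^n-w^{-n})^2\big]=1.
\end{equation*}

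For the two maximum statements, restrict to $u\in[0,iK']$, which the paper says parametrizes $x\in[-1,1]$. For purely imaginary $u$, the reality of the theta functions on the real axis gives $\overline{\Theta_j(a+u)}=\Theta_j(a-u)$, hence $|w|=1$. Writing $w=e^{i\theta(u)}$, we get
\begin{equation*}
Z2_n=\cos(n\theta),\qquad \sqrt{-G}\,V2_{n-2}=\pm\sin(n\theta).
\end{equation*}
Both are bounded by $1$ in absolute value. Equality is attained because $\theta$ moves continuously from $0$ to a total change of $(n-2\mu)\pi/n$ (this is the normalization $nI(1,-1)=(n-2\mu)\pi$), so $n\theta$ sweeps an interval of length at least $\pi$ whenever $\mu\le n/2$ and $n-2\mu\geq 1$. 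The borderline case $\mu=n/2$ can be handled by the endpoint value $Z2_n(1)=1$, at $u=0$ where $w=1$.

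The derivative formula \eqref{first derivative is zero at c} is where the real work lies. By the chain rule,
\begin{equation*}
\frac{dZ2_n}{dx}=n\cdot\tfrac12(w^n-w^{-n})\cdot\frac{(\log w)'(u)}{x'(u)}=n\sqrt{G}\,V2_{n-2}\cdot\frac{(\log w)'(u)}{x'(u)}.
\end{equation*}
So it suffices to prove
\begin{equation*}
\frac{d\log w}{du}=\frac{x-c}{\sqrt{G(x)}}\,\frac{dx}{du},
\end{equation*}
which says that $\log w$ is the elliptic integral $\int (x-c)\,dx/\sqrt{G}$ of \eqref{Elliptic integral expanded}. I would compute the left side with the standard formulas
\begin{equation*}
\frac{\Theta_1'}{\Theta_1}(v)=Z(v)+\frac{\operatorname{cn}(v)\operatorname{dn}(v)}{\operatorname{sn}(v)},
\end{equation*}
together with the analogous formula for $\Theta_3'/\Theta_3$ (obtained from $\Theta_4$ via a half-period shift), evaluated at $v=a\pm u$. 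I would then apply the addition theorems to combine the terms into a rational expression in $\operatorname{cn}(2u)$ times $\operatorname{sn}(2u)\operatorname{dn}(2u)$.

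The right side is computed directly from $x(u)$ and the definitions of $\gamma$ and $\varepsilon$. Matching the two sides, both are elliptic functions with simple poles only at $u=\pm a$. Comparing residues there and values at one further point (for instance $u=0$) then pins down $c$, and this must reproduce \eqref{defC}, including the term $2\operatorname{sn}(2a)Z(a)$ that comes from the Zeta-function contributions. I expect this residue-and-constant bookkeeping to be the main obstacle. If it becomes unwieldy, the fallback is to verify the identity via Liouville's theorem on the difference of the two sides.
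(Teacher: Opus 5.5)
The paper does not prove this lemma at all; it refers to \cite[Lemma 2.8]{Levedev1994}, where these facts come out of the Chebyshev differential equation. Your verification directly from the theta quotient $w$ is therefore a different, self-contained route, and much of it is sound. The identity $Z2_n^2-GV2_{n-2}^2=1$ is immediate, and $|w|=1$ on $u\in i[0,K']$ gives both upper bounds. The step you expect to be the main obstacle also closes quickly. Indeed $x^2-1$ and $(x-\gamma)^2+\varepsilon^2$ are constant multiples of $\operatorname{sn}^2(2u)/(\operatorname{cn}(2u)-\operatorname{cn}(2a))^2$ and $\operatorname{dn}^2(2u)/(\operatorname{cn}(2u)-\operatorname{cn}(2a))^2$. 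Hence $dx/du=\kappa\sqrt{G}$ with $\kappa=-2\operatorname{dn}(2a)/\operatorname{sn}(2a)$, for the branch equal to $i\sqrt{-G}$ on $[-1,1]$. Since $\operatorname{Res}_{u=a}x=\operatorname{sn}(2a)/(2\operatorname{dn}(2a))$, the residues of $w'/w$ ($-1$ at $a$, $+1$ at $-a$) match those of $\kappa(x-c)$. Evaluating at $u=0$ (where $x=-1$) with your formula for $\Theta_1'/\Theta_1$ and with $\Theta_3'/\Theta_3=Z-k^2\operatorname{sn}\operatorname{cn}/\operatorname{dn}$ returns exactly \eqref{defC}.

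The genuine gap is in the polynomiality step, on which everything else rests. Invariance under $u\mapsto u+2K$ and $u\mapsto u+2iK'$ does not make $w^n+w^{-n}$ a rational function of $\operatorname{cn}(2u)$. The period lattice of $\operatorname{cn}(2u)$, hence of $x(u)$, is $\Lambda=2K\mathbb{Z}+(K+iK')\mathbb{Z}$, which contains $2K\mathbb{Z}+2iK'\mathbb{Z}$ with index two. An even function that is elliptic only for the smaller lattice is rational in $\operatorname{sn}^2u$, which is a quadratic extension of $\mathbb{C}(\operatorname{cn}(2u))$. Your pole count has the same defect: in a cell of the smaller lattice, the factors $\Theta_3(a\mp u)$ also give $w$ a pole at $a+K+iK'$ and a zero at $-a+K+iK'$.

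The missing ingredient is the pair of relations $\Theta_1(v+K+iK')=q^{-1/4}e^{-i\pi v/(2K)}\Theta_3(v)$ and $\Theta_3(v+K+iK')=iq^{-1/4}e^{-i\pi v/(2K)}\Theta_1(v)$. They yield $w(u+K+iK')=e^{-2\pi i a/K}w(u)=e^{-2\pi i\mu/n}w(u)$, so $w^{\pm n}$ is $\Lambda$-elliptic exactly when $\mu\in\mathbb{Z}$. This integrality is implicit in the construction, but your argument never uses it, and it is really needed. For half-integer $\mu$ both of your checks pass, yet $Z2_n=1$ at $u=0$ and $Z2_n=-1$ at $u=K+iK'$, two points with the same $x=-1$. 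On $\mathbb{C}/\Lambda$ with $\mu\in\mathbb{Z}$, the poles are indeed only at $\pm a$ and your degree count for $Z2_n$ goes through. For $V2_{n-2}$ you must also check that the simple zeros of $\sqrt{G}$ are cancelled by zeros of $w^n-w^{-n}$. These zeros are the four half-periods of $\Lambda$, which map to $x=\pm1$ and $x=\gamma\pm i\varepsilon$. The cancellation follows from oddness and $\Lambda$-ellipticity, but counting poles at $\pm a$ alone does not exclude poles there.

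Two smaller points on the maxima.
\begin{itemize}
\item For $Z2_n$ no sweeping is needed: $w(0)=1$ gives the value $1$ at $u=0$ for every $\mu$. With the displayed $x(u)$ this is the endpoint $x=-1$, not $x=1$.
\item For $\sqrt{-G}\,V2_{n-2}$, your remark on $\mu=n/2$ only treats $Z2_n$, and that case cannot be rescued in general. For $n=2$, $\mu=1$ one gets $c=0$ and $Z2_2(x)=1-2k'^2(1-x^2)$, so $\max_{[-1,1]}|\sqrt{-G}\,V2_0|=\sqrt{1-(1-2k'^2)^2}<1$ whenever $k^2>1/2$. The sweep argument therefore genuinely needs $n-2\mu\geq 1$. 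It also needs the exact value $nI(1,-1)=(n-2\mu)\pi$, which you import from the paper; the theta side alone only gives $w(iK')^n=(-1)^n$, i.e.\ the net change of $n\theta$ modulo $2\pi$.
\end{itemize}
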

\subsection{A sharp polynomial inequality}
Before proving Proposition \ref{(1-x)^3 theorem}, we discuss the following problem \cite[Problem BZ21]{Levedev1994} and its solution.

\begin{problem}\label{ProblemBZ21}
    Among all the polynomials $Q_n\in \Pi_n$ satisfying 
\begin{equation*}
    Q_n'(1)=Q_n''(1)=0,
\end{equation*}
find the polynomial of least deviation from zero on $[-1,1]$ (i.e. the polynomial that minimizes $\max_{x \in [-1,1]}|Q_n(x)|$).
\end{problem}

The solution to this problem is the monic polynomial $L_nZ2_n(x,k_n,1)$ whose modulus $k_n$ satisfies $c(1,k_n)=1$, where $c$ is defined as in \eqref{defC}. As discussed above, the value $c=1$ is the extreme case, which can only exist when $\mu=1$. In such a case, we have exactly $n-1$ equioscillation values, all of them on the interval $[-1,1]$. See Figure \ref{fig:2} for the case $n=10$.

This assertion can be verified more formally by using the properties of $Z2_n(x,k,\mu)$ coming from its definition and Lemma \ref{properties Z2_n}. This is done by Lebedev, but we include the details here for completeness.
\begin{proof}[Solution of Problem \ref{ProblemBZ21}]
We first verify that $Z2_n'(1,k_n,1)=Z2_n''(1,k_n,1)=0$. The parameters are chosen to be $\mu=1$ and  $k_n$ to be the solution to $c(1,k_n)=1$. Hence, Lemma \ref{properties Z2_n} applies and \eqref{first derivative is zero at c} takes the form
\begin{equation}\label{first derivative}
 Z2'_n(x,k_n,1)=n(x-1)V2_{n-2}(x,k_n,1).    
\end{equation}
This implies that $Z2'_n(1,k_n,1)=0$. Next, by differentiating (\ref{second derivative is zero when c=1}) we get
\begin{equation}\label{V_2Vanish}
    2Z2_{n}\cdot Z2_{n}'-G'\cdot V2_{n-2}^2-2G\cdot V2_{n-2}\cdot V2_{n-2}'=0.
\end{equation}
Recall that $G(x)=(1-x^2)((x-\gamma)^2+\varepsilon^2)$, which means that $G(1)=0$ and $G'(1)\neq 0$. Observe that \eqref{first derivative} yields that $Z2_n'(1)=0$. Combining these equations and \eqref{V_2Vanish}, we deduce that $V2_{n-2}(1)=0$. From \eqref{first derivative}, it follows that $Z2'_n(x,k_n,1)$ has a double root at $x=1$, and thus $Z2''_n(1,k_n,1)=0$.

The least deviation assertion follows from the equioscillation property of $Z2_n(x,k,1)$ on this interval. Indeed, by expanding (\ref{arccos of Z2_n}) we notice that $$\arccos(Z2_n(x,k_n,1))=nI(x,1)=\int_{x}^1 \dfrac{n(1-x)}{\sqrt{(1-x^2)((x-\gamma)^2+\varepsilon^2)}}\, dx.$$ Since the integrand is purely positive in $[-1,1)$, it follows that $\arccos(Z2_n(x,k_n,1))$ is continuous and monotonically decreasing on $[-1,1]$. Moreover, when $\mu=1$, we have the condition $nI(-1,1)=(n-2)\pi$. Given that $I(1,1)=0$, the intermediate value theorem yields that $\arccos (Z2_n(x,k_n,1))$ takes each of the values $0,\pi,2\pi,\dots,(n-2)\pi$ exactly once in $[-1,1]$. As a result, there exist exactly $n-1$ points in $[-1,1]$ for which $Z2_n(x,k_n,1)$ takes the values $\pm 1$ (which are local extrema of this polynomial by \eqref{MaxZ2n}), and these points are interlaced. This establishes the equioscillation property of $Z2_n(x,k_n,1)$ in the interval $[-1,1]$ when $k_n$ is chosen to solve $c(1,k_n)=1$. In order to deduce that $Z2_n(x,k_n,1)$ solves Problem \ref{ProblemBZ21}, we have to use the equioscillation property and argue as in the proof of Proposition \ref{Prop:SquarePolIneq}. We omit the details to avoid repetition.
\end{proof}
The polynomial required to prove Proposition \ref{(1-x)^3 theorem} is a modification of $Z2_n(x,k_n,1)$. More precisely, we need 
\begin{equation}\label{poly Z2_n modified}
\bar{Z2}_n(x,k_n,1) := \dfrac{9}{n^2}\left(\frac{1-\operatorname{cn}(2a_n)}{\operatorname{dn}(2a_n)}\right)^2\dfrac{1}{(1-x)^3} \ (1-Z2_{n}(x,k_n,1)).
\end{equation}
In the next proposition, we state the properties of $\bar{Z2}_n(x,k_n,1)$ that allow us to see that this is the extremal polynomial for \eqref{PolynomIneq6}.
\begin{proposition}\label{Prop: Zolotarev modified properties} The following properties hold:
\begin{enumerate}
    \item $0\leq1-Z2_{n}(x,k_n,1)\leq 2$ has the $n-1$ equioscillation property on $[-1,1]$ that alternates from $0$ to $2$.
    \item The function $\bar{Z2}_n(x,k_n,1)$ as defined in $(\ref{poly Z2_n modified})$ is a polynomial of degree $n-3$.
    \item The polynomial $\bar{Z2}_n(x,k_n,1)$ satisfies $\bar{Z2}_n(1)=1$.    
    \end{enumerate}
    \end{proposition}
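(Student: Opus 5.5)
Item (1) comes straight from Lemma \ref{properties Z2_n} and the equioscillation analysis in the solution of Problem \ref{ProblemBZ21}. By \eqref{MaxZ2n} we have $|Z2_n(x,k_n,1)|\leq 1$ on $[-1,1]$, so $0\leq 1-Z2_n(x,k_n,1)\leq 2$ there. From \eqref{arccos of Z2_n} with $\mu=1$, the function $\arccos(Z2_n(x,k_n,1)) = nI(x,1)$ decreases continuously from $(n-2)\pi$ at $x=-1$ to $0$ at $x=1$. It therefore passes through each of $0,\pi,\dots,(n-2)\pi$ exactly once. At these $n-1$ interlaced points $1-Z2_n$ takes the values $0$ and $2$ alternately, and the value at $x=1$ is $0$.

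For (2), note that $1-Z2_n(x,k_n,1)$ has degree $n$. In the solution of Problem \ref{ProblemBZ21} we showed $Z2_n(1)=1$ (since $\arccos(Z2_n(1))=0$) and $Z2_n'(1)=Z2_n''(1)=0$. Hence $1-Z2_n$ vanishes to order at least three at $x=1$, so $(1-x)^3$ divides it and the quotient is a polynomial of degree $n-3$. The triple root is exact: the root is not of order four because $V2_{n-2}$ has only a simple root at $1$. To check this, differentiate \eqref{V_2Vanish} once more at $x=1$, or use the nonvanishing of $Z2_n'''(1)$ computed in (3).

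Item (3) is the real content. Since $\bar{Z2}_n(1) = \frac{9}{n^2}\big(\frac{1-\operatorname{cn}(2a_n)}{\operatorname{dn}(2a_n)}\big)^2 \cdot \big(-\frac{1}{3!}\big)(1-Z2_n)'''(1) = \frac{9}{n^2}\big(\frac{1-\operatorname{cn}}{\operatorname{dn}}\big)^2\frac{Z2_n'''(1)}{6}$, we need
\[
\frac{Z2_n'''(1)}{6}=\frac{n^2}{9}\Big(\frac{\operatorname{dn}(2a_n)}{1-\operatorname{cn}(2a_n)}\Big)^2 .
\]
To get this, differentiate \eqref{first derivative} twice at $x=1$, which gives $Z2_n'''(1)=2nV2_{n-2}'(1)$. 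Then differentiate the Fermat–Abel identity \eqref{second derivative is zero when c=1} a second time and evaluate at $x=1$, using $Z2_n(1)=1$, $Z2_n'(1)=Z2_n''(1)=0$, $V2_{n-2}(1)=0$ and $G(1)=0$. Every term drops except a multiple of $G'(1)V2_{n-2}'(1)^2$ and a multiple of $Z2_n'(x)^2$. A cleaner route is to write $Z2_n = \cos(nI)$ near $x=1$. Since $I(x,1)\sim \int_x^1 \frac{(1-t)\,dt}{\sqrt{2(1-t)((1-\gamma)^2+\varepsilon^2)}}$, we get $nI \approx \frac{2n}{3}\frac{(1-x)^{3/2}}{\sqrt{2((1-\gamma)^2+\varepsilon^2)}}$. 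Then $1-Z2_n \approx \frac{(nI)^2}{2} = \frac{2n^2(1-x)^3}{9((1-\gamma)^2+\varepsilon^2)}$, and this yields
\[
\frac{Z2_n'''(1)}{6}=\frac{2n^2}{9\,((1-\gamma)^2+\varepsilon^2)}.
\]

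The remaining step, which I expect to be the main obstacle, is the elliptic-function identity
\[
(1-\gamma)^2+\varepsilon^2 = 2\Big(\frac{1-\operatorname{cn}(2a)}{\operatorname{dn}(2a)}\Big)^2
\]
with $\gamma,\varepsilon$ as in \eqref{gamma and epsilon}. I would prove it by substituting these formulas and using $\operatorname{sn}^2=1-\operatorname{cn}^2$ and $\operatorname{dn}^2 = k'^2+k^2\operatorname{cn}^2$. Writing $C=\operatorname{cn}(2a)$ and $D=\operatorname{dn}(2a)$, this reduces to the polynomial identity $(D^2-C)^2+k^2k'^2(1-C^2)^2 = 2D^2(1-C)^2$, which I expect to verify by direct expansion. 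If the constant turns out off by a factor, the normalization in the paper's convention for $\varepsilon$ must be checked against \cite{Levedev1994}.
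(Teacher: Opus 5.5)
Your treatment of (1) and (2) matches the paper. Your ``cleaner route'' for (3), via $Z2_n=\cos(nI(x,1))$, is a valid and different way to get $Z2_n'''(1)$. As written, however, it contains a factor-of-$2$ slip, and as a result the elliptic identity you single out as the remaining step is false.

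Set $\kappa=\frac{2n}{3\sqrt{2((1-\gamma)^2+\varepsilon^2)}}$. You correctly obtain $nI(x,1)\approx\kappa(1-x)^{3/2}$, but then
\[
\frac{(nI)^2}{2}\approx\frac{\kappa^2}{2}(1-x)^3=\frac{n^2(1-x)^3}{9\,((1-\gamma)^2+\varepsilon^2)},
\]
not $\frac{2n^2(1-x)^3}{9((1-\gamma)^2+\varepsilon^2)}$; the latter is $(nI)^2$. Hence $Z2_n'''(1)/6=n^2/\bigl(9((1-\gamma)^2+\varepsilon^2)\bigr)$.

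The identity actually needed is therefore $(1-\gamma)^2+\varepsilon^2=\bigl(\frac{1-\operatorname{cn}(2a)}{\operatorname{dn}(2a)}\bigr)^2$, i.e. $(D^2-C)^2+k^2k'^2(1-C^2)^2=D^2(1-C)^2$. This holds identically: expand using $D^2=k'^2+k^2C^2$, or, as the paper does, use $k^2\operatorname{sn}^2=1-D^2$ and $k'^2\operatorname{sn}^2=D^2-C^2$. Your version, with right-hand side $2D^2(1-C)^2$, is false; as $k\to0$ we have $D\to1$ and the left side tends to $(1-C)^2$. So the ``direct expansion'' you plan would not close.

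Your two factor-$2$ errors cancel, which is why your target value for $Z2_n'''(1)/6$ is right. The fallback of attributing a discrepancy to Lebedev's normalization of $\varepsilon$ is a misdiagnosis: the paper's $\varepsilon$ is the right one, and the slip is in the expansion of $1-\cos$.

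There is also a smaller problem. Differentiating \eqref{second derivative is zero when c=1} twice (equivalently, \eqref{V_2Vanish} once more) and evaluating at $x=1$ gives only $0=0$. You need the third derivative, which yields $Z2_n'''(1)=3G'(1)v_1^2$, i.e. the paper's relation $2z_3=G'(1)v_1^2$. So neither your first route for (3) nor the first of your two suggested exactness checks in (2) works as stated. Exactness of the triple root is not needed for the degree claim anyway.

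Once the arithmetic is fixed, your integral route has a real advantage over the paper's. The paper combines $2z_3=G'(1)v_1^2$ with $z_3=nv_1/3$ and then divides by $v_1$. That step tacitly needs $v_1\neq0$; it is true by an order-of-vanishing count in the Fermat--Abel identity, but the paper does not say so. Your expansion gives $z_3=\kappa^2/2>0$ directly, proves the root at $x=1$ has exact order three, and uses no property of $V2_{n-2}$. Both routes then end with the same elliptic identity.
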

    \begin{proof}
     Part (1) follows immediately from the equioscillation property of $Z2_{n}(x,k_n,1)$ between the values $-1$ and $1$ on the interval $[-1,1]$.  For part (2), we plug $x=1$ in \eqref{arccos of Z2_n} to get $$\arccos(Z2_n(1,k_n,1))=nI(1,1)=0\ \  \text{ or }\ \ Z2_n(1,k_n,1)=1.$$ Since $Z2_n'(1,k_n,1) = Z2_n''(1,k_n,1) = 0$, it follows that $1-Z2_{n}(x,k_n,1)$ has a triple root at $x=1$. As a result, $\bar{Z2}_n(x,k_n,1)$ is a polynomial of degree $n-3$

     Part (3) is more interesting. This problem is equivalent to finding the third coefficient of the Taylor's expansion of $1-Z2_{n}(x,k_n,1)$ at $x=1$. Thus, our goal is to calculate $z_3=\frac{Z2_{n}'''(1,k_n,1)}{3!}$.  Let us consider the Taylor's expansions
\begin{equation*}
 Z2_{n}(x,k_n,1)=1+z_3(x-1)^3+\mathcal{O}((x-1)^4)   
\end{equation*}
and
\begin{equation*}
V2_{n-2}(x,k_n,1)=v_1(x-1)+\mathcal{O}((x-1)^2),    
\end{equation*}
where $v_1=V2_{n-2}'(1,k_n,1)$. It follows that
$$Z2_{n}(x,k_n,1)^2=1+2z_3(x-1)^3+\mathcal{O}((x-1)^4)$$ and $$V2_{n}(x,k_n,1)^2=v_1^2(x-1)^2+\mathcal{O}((x-1)^3).$$  Since $G(x)$ has a simple root at $x=1$, we can take the third Taylor coefficient on both sides of \eqref{second derivative is zero when c=1} to deduce
\begin{equation}\label{Z3V1-1}
    2z_3-G'(1)v_1^2=0.
\end{equation}
Now, by differentiating \eqref{first derivative} twice an plugging $x=1$ in the resulting equation, we obtain
$$Z2_n'''(1,k_n,1)=2nV2_{n-2}'(1,k_n,1)=2nv_1,$$
which implies that
\begin{equation}\label{Z3V1-2}
    z_3=\frac{2n}{3!}v_1.
\end{equation}
Combining \eqref{Z3V1-1} and \eqref{Z3V1-2}, we get $$\frac{4n}{3!}v_1=G'(1)v_1^2 \ \ \text{ or }\ \ \frac{4n}{3!}\frac{1}{G'(1)}=v_1  .$$ Therefore, $z_3=\frac{8n^2}{(3!)^2}\frac{1}{G'(1)}$. Finally, by the choices of $G(x)$ and the parameters $\gamma$ and $\varepsilon$ in \eqref{gamma and epsilon}, we have
\begin{align*}
   G'(1)&=2((1-\gamma)^2+\varepsilon^2)\\
   &=2\left(\left(1-\frac{\operatorname{cn}(2a_n)}{\operatorname{dn}^2(2a_n)}\right)^2+\left(\frac{kk'\operatorname{sn}^2(2a_n)}{\operatorname{dn}^2(2a_n)}\right)^2\right)\\
    &=\frac{2}{\operatorname{dn}^4(2a_n)}\left(\left(\operatorname{dn}^2(2a_n)-\operatorname{cn}(2a_n)\right)^2+\left(kk'\operatorname{sn}^2(2a_n)\right)^2\right).
\end{align*}
Using the identities $k^2\operatorname{sn}^2(u)+\operatorname{dn}^2(u)=1$ and $(k')^2\operatorname{sn}^2(u)+\operatorname{cn}^2(u)=\operatorname{dn}^2(u)$, we get 
\begin{align*}
 \left(kk'\operatorname{sn}^2(2a_n)\right)^2&=\Big(1-\operatorname{dn}^2(2a_n)\Big)\cdot\Big(\operatorname{dn}^2(2a_n)-\operatorname{cn}^2(2a_n)\Big)\\   
 &=\operatorname{dn}^2(2a_n)-\operatorname{cn}^2(2a_n)-\operatorname{dn}^4(2a_n)+\operatorname{dn}^2(2a_n)\operatorname{cn}^2(2a_n).
\end{align*}
Notice that the terms $-\operatorname{cn}^2(2a_n)$ and $-\operatorname{dn}^4(2a_n)$ appear in the expansion of $\left(\operatorname{dn}^2(2a_n)+\operatorname{cn}(2a_n)\right)^2$ with opposite sign. After cancellation, we get
\begin{align*}
   G'(1)&=\frac{2}{\operatorname{dn}^4(2a_n)}\left(\left(\operatorname{dn}^2(2a_n)-\operatorname{cn}(2a_n)\right)^2+\left(kk'\operatorname{sn}^2(2a_n)\right)^2\right)\\
   &=\frac{2}{\operatorname{dn}^4(2a_n)}\Big(-2\operatorname{dn}^2(2a_n)\operatorname{cn}(2a_n)+\operatorname{dn}^2(2a_n)+\operatorname{cn}^2(2a_n)\operatorname{dn}^2(2a_n)\Big)\\
   &=\frac{2}{\operatorname{dn}^4(2a_n)}\Big(\operatorname{dn}^2(2a_n)\Big( 1-2\operatorname{cn}(2a_n)+\operatorname{cn}^2(2a_n)\Big)\Big)\\
    &=2\left(\frac{1-\operatorname{cn}(2a_n)}{\operatorname{dn}(2a_n)}\right)^2.
\end{align*}
Hence, $$\displaystyle z_3=\dfrac{8n^2}{(3!)^2} \frac{1}{G'(1)} =\frac{4n^2}{(3!)^2}\left(\dfrac{\operatorname{dn}(2a_n)}{1-\operatorname{cn}(2a_n)}\right)^2=\frac{n^2}{9}\left(\frac{\operatorname{dn}(2a_n)}{1-\operatorname{cn}(2a_n)}\right)^2.$$ 
By \eqref{poly Z2_n modified}, we conclude
\begin{equation*}
    \bar{Z2}_n(1,k_n,1) = \dfrac{9}{n^2}\left(\frac{1-\operatorname{cn}(2a_n)}{\operatorname{dn}(2a_n)}\right)^2 \lim_{x\to 1} \frac{-z_3(x-1)^3 + \mathcal{O}((x-1)^4)}{(1-x)^3} = 1.
\end{equation*}
This finishes the proof.
    \end{proof}

\begin{proof}[Proof of Proposition \ref{(1-x)^3 theorem}]
By Proposition \ref{Prop: Zolotarev modified properties}, we know that $\bar{Z2}_n(1,k_n,1)=1$ and that the $n$-th degree polynomial $(1-x)^3\bar{Z2}_n(x,k_n,1)$ has the equioscillation property with amplitude $\frac{18}{n^2}\left(\frac{1-\operatorname{cn}(2a_n)}{\operatorname{dn}(2a_n)}\right)^2$ for exactly $n-2$ points on the interval $[-1,1)$ (by part (1) of Proposition \ref{Prop: Zolotarev modified properties} , this polynomial has the property for $n-1$ points in $[-1,1]$).  Thus, an application of the same equioscillation argument as in the proof of Proposition \ref{Prop:SquarePolIneq} establishes Proposition \ref{(1-x)^3 theorem}.
\end{proof}

To conclude this section, we study the asymptotic behavior of the optimal constant in Theorem \ref{Thm:SixthDerivativeRest} for large values of $n$, which was stated in Remark \ref{Remark asymptotycs}. In Table \ref{tab:ValuesKn}, we compute the numerical values of $k_n$ for $3\leq n\leq 80$, where $k_n$ is the solution to \eqref{c=1}.

\begin{proof}[Proof of Remark \ref{Remark asymptotycs}]

\begin{table}[h]
\centering
\begin{tabular}{|c| c|}
\hline
$n$ &
$k_n$ \\
\hline
$3$  & $0.9596390$ \\
$4$  & $0.9332251$ \\
$5$  & $0.9236033$ \\
$6$  & $0.9188118$ \\
$7$  & $0.9160551$ \\
$8$  & $0.9143169$ \\
\hline
\end{tabular}
\hspace{0.2cm}\dots \dots\hspace{0.2cm}
\begin{tabular}{|c| c|}
\hline
$n$ &
$k_n$ \\
\hline
$75$  & $0.9089678$ \\
$76$  & $0.9089662$ \\
$77$  & $0.9089647$ \\
$78$  & $0.9089633$ \\
$79$  & $0.9089619$ \\
$80$  & $0.9089606$ \\
\hline
\end{tabular}
\caption{Numerical values of some $k_n$'s from \eqref{c=1}.}
\label{tab:ValuesKn}
\end{table}

We want to find the asymptotic solution to $c(k_n,1)=1$ as $n\to \infty$. Recall that $c$ is given by  the equation (\ref{defC}), and thus the equation $c=1$ is equivalent to 
\begin{equation}\label{c=1New}
    \operatorname{cn}\left(\frac{2K(k_n)}{n}\right)+2\operatorname{sn}\left(\frac{2K(k_n)}{n}\right)Z\left(\frac{K(k_n)}{n}\right)=1.
\end{equation}
    Since the sequence $\{k_n\}$ can be bounded by a number strictly less than $1$, it follows that $K(k_n)$ is bounded, which implies that  $a_n=\dfrac{K(k_n)}{n}\to 0$ as $n\to \infty$. Using the asymptotics of the elliptic functions around $a\sim 0$ \cite[\href{https://dlmf.nist.gov/22.10}{Jacobi E.F.}]{NIST:DLMF}, we have $\operatorname{cn}(2a)\sim 1-2a^2$, $\operatorname{sn}(2a)\sim 2a$,  $\operatorname{dn}(2a)\sim 1-2k^2a^2$ and $Z(a)\sim (1-E/K)a$. Plugging the asymptotic approximations in \eqref{c=1New} as $n\to \infty$, we obtain 
\begin{align*}
    (1-2a^2)+2(2a)((1-E/K)a)\sim 1.
\end{align*}
That is,
\begin{equation*}
    -2a^2+4a^2(1-E(k)/K(k))\sim 0.
\end{equation*}
It follows that $(1-E(k)/K(k))\sim 1/2$, which implies that $ K(k) \sim 2E(k)$. The numerical solution to $$K(k^*) = 2E(k^*)$$ is $k^*\approx 0.9089$ that gives $K(k^*)\approx2.3210$ and then $a_n \approx \frac{2.3210}{n}$ for large $n$. Hence, the asymptotic value of the right-hand side of (\ref{Sharp6Deriv}) is  $$\left(\frac{1-\operatorname{cn}(2a_{n+3})}{\operatorname{dn}(2a_{n+3})}\right)^2\sim\left(\frac{2a_{n+3}^2}{1-2k_{n+3}^2a_{n+3}^2}\right)^2\sim 4a_{n+3}^4\sim\frac{4K(k^*)^4}{(n+3)^4}$$ when $a\sim 0$, or equivalently,  when $n\to +\infty$.

\begin{figure}[h]
    \centering
\includegraphics[width=0.75\linewidth]{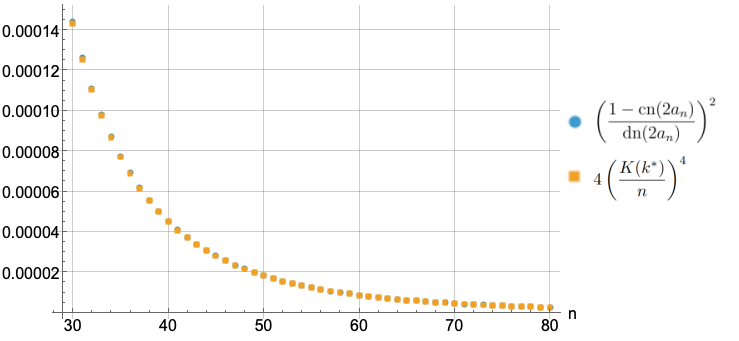}
    \caption{Asymptotics  of the value from Theorem \ref{Thm:SixthDerivativeRest}, for $30\leq n\leq80$.}
\end{figure}
\end{proof}

\section{Relation between the optimal constants and kernels associated to $\nabla^k$ and $\nabla^{2k}$}\label{SectionK-2K}

This section is devoted to establish Theorem \ref{Thm:RelationK-2K}. Our argument essentially relies on Fej\'er-Riesz theorem to represent the Fourier transform of a kernel in $\mathcal{F}_{n,2}$ as the square of the modulus of a polynomial. Nevertheless, a difficulty arises as the coefficients of this polynomial are complex numbers, and thus we need to show that the optimal constant does not decrease if we allow the kernels to be complex-valued. This result is stated in the following lemma.

\begin{lemma} \label{Lemma:ComplexKernels}
    Let $C_{k,n,1}$ be the constant defined in \eqref{Cons:Ckn1}. Then we have the inequality 
    \begin{equation*}
        \sup_{0 \neq f \in \ell^2(\mathbb{Z})} \frac{\| \nabla^k(u\ast f) \|_{\ell^2(\mathbb{Z})}}{\| f \|_{\ell^2(\mathbb{Z})}} \geq C_{k,n,1}
    \end{equation*}
    for all kernels $u : \{-n,\dots,n\} \to \mathbb{C}$ with normalization $\sum_{j=-n}^{n}u(j)=1$.
\end{lemma}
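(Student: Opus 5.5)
The plan is to reduce a complex kernel to a real symmetric one without increasing the operator norm. As in \eqref{Sup1Deriv}, for any complex kernel $u$ supported in $\{-n,\dots,n\}$ the supremum equals $M(u):=\max_{\xi\in\mathbb{T}}|(e^{i\xi}-1)^k\widehat{u}(\xi)|$. The Plancherel and H\"older argument there never used that $u$ is real. Since $|e^{i\xi}-1|=2|\sin(\xi/2)|$, we have $M(u)=\max_\xi |2\sin(\xi/2)|^k|\widehat u(\xi)|$. So it suffices to produce, from $u$, a real symmetric normalized kernel $v$ supported in $\{-n,\dots,n\}$ with $|\widehat v(\xi)|\le |\widehat u(\xi)|$ pointwise, or at least $M(v)\le M(u)$.

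The construction uses two averaging steps, each a convex combination of transforms.
\begin{itemize}
\item \emph{Symmetrization.} Set $\tilde u(j)=\frac12(u(j)+u(-j))$. Then $\widehat{\tilde u}(\xi)=\frac12(\widehat u(\xi)+\widehat u(-\xi))$. Because the weight $|2\sin(\xi/2)|^k$ is even in $\xi$, the triangle inequality gives $M(\tilde u)\le \frac12(M(u)+M(u))=M(u)$. Normalization is preserved, since $\sum_j\tilde u(j)=1$.
\item \emph{Real part.} For the symmetric $\tilde u$, define $v(j)=\operatorname{Re}\tilde u(j)$. The kernel $v$ is real, symmetric and supported in $\{-n,\dots,n\}$. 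It is normalized because $\sum_j\tilde u(j)=1$ is real, so $\sum_j\operatorname{Re}\tilde u(j)=1$. Its transform is $\widehat v(\xi)=\operatorname{Re}\widehat{\tilde u}(\xi)$: the transform of a symmetric kernel is $\tilde u(0)+2\sum_{j\ge1}\tilde u(j)\cos(j\xi)$, and taking real parts commutes with this real-coefficient cosine expansion. Hence $|\widehat v(\xi)|\le|\widehat{\tilde u}(\xi)|$ pointwise, and $M(v)\le M(\tilde u)$.
\end{itemize}

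Combining the two steps, $v\in\mathcal{F}_{n,1}$ and $C_{k,n,1}\le M(v)\le M(u)$, which is the claim.

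The only delicate point is checking that the identification of the operator norm with $M(u)$ holds for complex kernels. It is the same Plancherel computation as before, with $\|\widehat{\nabla^k u}\|_\infty$ replaced by the modulus of a complex continuous function. The order of the two steps also matters: symmetrize first, then take the real part. Taking real parts of a nonsymmetric kernel would not correspond to $\operatorname{Re}\widehat u$ pointwise. Alternatively, one could use $\overline{\widehat u(-\xi)}=\widehat{\bar u}(\xi)$ and average $u$ with $\bar u$ directly, but the order chosen above is cleanest.
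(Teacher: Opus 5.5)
Your proof is correct, and it takes a different route from the paper's.

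\textbf{The paper's route.} The paper stays on the operator side. It takes $v=\operatorname{Re}(u)=(u+\overline{u})/2$ directly, with no symmetrization. It bounds the ratio for $v$ by the average of the ratios for $u$ and $\overline{u}$, using the triangle inequality. It then notes that $\|\nabla^k(\overline{u}\ast f)\|=\|\nabla^k(u\ast\overline{f})\|$, so $\overline{u}$ has the same operator norm as $u$. Finally, it invokes \cite[Lemma 4]{Richardson2026} to pass from real, possibly non-symmetric, normalized kernels to the symmetric class $\mathcal{F}_{n,1}$.

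\textbf{Your route.} You work entirely on the Fourier side. Symmetrization costs nothing because the multiplier $|e^{i\xi}-1|^k$ is even. Once the kernel is symmetric, its transform is a real-coefficient cosine sum. So taking the real part of the kernel is the same as taking the real part of the transform, which gives the pointwise bound $|\widehat v|\le|\widehat{\tilde u}|$.

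\textbf{What each buys.} Your approach is self-contained. The symmetrization step is exactly the content the paper borrows from Richardson, and you prove it in one line. The price is that you must check that $\sup_{f\neq 0}\|\nabla^k(u\ast f)\|/\|f\|=\|(e^{i\xi}-1)^k\widehat u\|_{L^\infty(\mathbb{T})}$ also holds for complex kernels, which you do correctly. The paper's conjugation trick avoids that identification in the complex-to-real step, but it relies on an external lemma for the symmetrization. Your closing remark is consistent with the paper's order of steps: averaging $u$ with $\overline{u}$ first also works, since $|\widehat{\overline{u}}(\xi)|=|\widehat u(-\xi)|$ and the triangle inequality on the sup norm suffices, even though pointwise domination need not hold for non-symmetric kernels.
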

\begin{proof}
    By \cite[Lemma 4]{Richardson2026}, we know that 
    \begin{equation*}
        \sup_{0 \neq f \in \ell^2(\mathbb{Z})} \frac{\| \nabla^k(v\ast f) \|_{\ell^2(\mathbb{Z})}}{\| f \|_{\ell^2(\mathbb{Z})}} \geq C_{k,n,1}
    \end{equation*}
    for all kernels $v : \{-n,\dots,n\} \to \mathbb{R}$ with normalization $\sum_{j=-n}^{n}v(j)=1$. Let $u : \{-n,\dots,n\} \to \mathbb{C}$ be such that $\sum_{j=-n}^{n} u(j) = 1$, and let us consider $v = \text{Re}(u) = (u + \overline{u})/2$. It follows that $v$ is a real-valued kernel with normalization $\sum_{j=-n}^{n}v(j)=1$. By means of the triangle inequality, we have
    \begin{equation*}
        \begin{aligned}
            \frac{\| \nabla^k(v\ast f) \|_{\ell^2(\mathbb{Z})}}{\| f \|_{\ell^2(\mathbb{Z})}} &= \frac{1}{2} \frac{\| \nabla^k((u + \overline{u})\ast f) \|_{\ell^2(\mathbb{Z})}}{\| f \|_{\ell^2(\mathbb{Z})}}\\ &\leq \frac{1}{2} \left( \frac{\| \nabla^k(u\ast f) \|_{\ell^2(\mathbb{Z})}}{\| f \|_{\ell^2(\mathbb{Z})}} + \frac{\| \nabla^k( \overline{u} \ast f) \|_{\ell^2(\mathbb{Z})}}{\| f \|_{\ell^2(\mathbb{Z})}} \right)\\ &= \frac{1}{2} \left( \frac{\| \nabla^k(u\ast f) \|_{\ell^2(\mathbb{Z})}}{\| f \|_{\ell^2(\mathbb{Z})}} + \frac{\| \nabla^k( u \ast \overline{f}) \|_{\ell^2(\mathbb{Z})}}{\| \overline{f} \|_{\ell^2(\mathbb{Z})}} \right)
        \end{aligned}
    \end{equation*}
    for all $0 \neq f \in \ell^2(\mathbb{Z})$. As a result,
    \begin{equation*}
        \frac{\| \nabla^k(u\ast f) \|_{\ell^2(\mathbb{Z})}}{\| f \|_{\ell^2(\mathbb{Z})}} \geq \frac{\| \nabla^k(v\ast f) \|_{\ell^2(\mathbb{Z})}}{\| f \|_{\ell^2(\mathbb{Z})}} \quad \text{or} \quad \frac{\| \nabla^k( u \ast \overline{f}) \|_{\ell^2(\mathbb{Z})}}{\| \overline{f} \|_{\ell^2(\mathbb{Z})}} \geq \frac{\| \nabla^k(v\ast f) \|_{\ell^2(\mathbb{Z})}}{\| f \|_{\ell^2(\mathbb{Z})}}
    \end{equation*}
    for every $0 \neq f \in \ell^2(\mathbb{Z})$. Hence,
    \begin{equation*}
        \sup_{0 \neq f \in \ell^2(\mathbb{Z})} \frac{\| \nabla^k(u\ast f) \|_{\ell^2(\mathbb{Z})}}{\| f \|_{\ell^2(\mathbb{Z})}} \geq \sup_{0 \neq f \in \ell^2(\mathbb{Z})} \frac{\| \nabla^k(v\ast f) \|_{\ell^2(\mathbb{Z})}}{\| f \|_{\ell^2(\mathbb{Z})}}  \geq C_{k,n,1}.
    \end{equation*}
\end{proof}

Additionally, we need to show the existence of a unique normalized symmetric kernel $u$ supported in $\{-n,\dots,n\}$ that attains $C_{k,n,1}$. This problem is equivalent to minimizing $\|(1-x)^{k/2} p(x)\|_{L^{\infty}([-1,1])}$ among all polynomials $p$ of degree at most $n$ such that $p(1)=1$. The next lemma guarantees the existence and uniqueness of such a polynomial, and also provides an oscillatory behavior for it. This lemma may be established adapting the ideas from \cite[Section 2]{NovelloSchiefermayrZinchenko2021}, where it is shown the existence and uniqueness of an extremal polynomial for the corresponding problem for monic polynomials. 
The key difference is that for our purposes we replace the space of polynomials of degree at most $n-1$ in Kolmogorov's criterion \cite[Theorem 3]{NovelloSchiefermayrZinchenko2021} by the space
\begin{equation*}
    V_n = \{p \in \mathcal{P}_n : p(1)=0\},
\end{equation*}
where $\mathcal{P}_n$ denotes the space of polynomials of degree at most $n$.

\begin{lemma}\label{lemma:WeightedCheb}
    Let $w : [-1,1] \to [0,\infty)$ be a continuous function that takes non-zero values on at least $n+1$ points of $[-1,1]$. Then:
    \begin{enumerate}
        \item There exists a unique real polynomial $T_{n,w}$ of degree at most $n$ with $T_{n,w}(1)=1$ that minimizes $\|w(x) p(x)\|_{L^{\infty}([-1,1])}$ among all polynomials $p$ of degree at most $n$ satisfying $p(1) = 1$.
        \item Let $P_n$ be a real polynomial of degree at most $n$ such that $P_n(1) = 1$. Then $P_n = T_{n,w}$ if and only if there exist points $x_0 < x_1 < \dots < x_n$ on $[-1,1]$ such that
        \begin{equation*}
            w(x_j) P_n(x_j) = (-1)^{n-j}\|w(x)P_n(x)\|_{L^{\infty}([-1,1])}
        \end{equation*}
        for all $j \in \{0,1,\dots,n\}$.
    \end{enumerate}
\end{lemma}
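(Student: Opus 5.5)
We will prove Lemma \ref{lemma:WeightedCheb} by adapting the standard Chebyshev alternation theory to the affine set $A_n=\{p\in\mathcal{P}_n : p(1)=1\}=p_0+V_n$, where $p_0\equiv 1$ and $V_n=\{p\in\mathcal{P}_n:p(1)=0\}$ is an $n$-dimensional subspace. Minimizing $\|w p\|_\infty$ over $A_n$ is the same as finding a best approximation of $w$ from the finite-dimensional subspace $wV_n=\{w q: q\in V_n\}$ of $C([-1,1])$, since $w(1+q)=w-w(-q)$.

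\textbf{Existence.} We first check that $q\mapsto \|wq\|_\infty$ is a norm on $V_n$ (in fact on $\mathcal{P}_n$). If $wq\equiv 0$, then $q$ vanishes at the at least $n+1$ points where $w\neq0$. Since $\deg q\le n$, this forces $q\equiv0$. A minimizing sequence in $A_n$ is therefore bounded in a norm on a finite-dimensional space, so it has a convergent subsequence, and continuity of $\|w\cdot\|_\infty$ gives a minimizer. Alternatively, existence follows from the general existence of best approximations from finite-dimensional subspaces.

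\textbf{Characterization (part (2)).} We use Kolmogorov's criterion: $P\in A_n$ is optimal if and only if for every $q\in V_n$ we have $\min_{x\in E}\, w(x)P(x)\,w(x)q(x)\le 0$, where $E$ is the set of points at which $|wP|$ attains $\|wP\|$.

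For sufficiency, suppose the alternation points $x_0<\dots<x_n$ exist. If some $P'\in A_n$ had $\|wP'\|<\|wP\|$, set $r=P-P'\in V_n$. Then $r(x_j)$ has the sign $(-1)^{n-j}$ (note $w(x_j)>0$, since $\|wP\|>0$). Hence $r$ has $n$ sign changes, and so $n$ zeros in $(x_0,x_n)$. It also satisfies $r(1)=0$.
\begin{itemize}
\item If $x_n<1$, this gives $n+1$ zeros, so $r\equiv 0$, a contradiction.
\item If $x_n=1$, then $w(1)P(1)=w(1)\ne 0$ would force $r(1)\neq0$, again a contradiction.
\end{itemize}

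For necessity, suppose fewer than $n+1$ alternations occur. Partition $E$ into at most $n$ maximal same-sign blocks separated by points $z_1<\dots<z_m$ with $m\le n-1$. We then need $q\in V_n$ such that $q\cdot\operatorname{sign}(wP)>0$ on $E$.
\begin{itemize}
\item If $1\notin E$ or the sign is compatible, take $q(x)=\pm(1-x)\prod_i(x-z_i)$, of degree $m+1\le n$. This lies in $V_n$ and has the right sign pattern, provided $1$ is not an interior point of $E$'s blocks; since $1$ is an endpoint, $(1-x)$ only vanishes at $x=1$.
\item If $1\in E$, note that $w(1)P(1)=w(1)$, and every $q\in V_n$ vanishes there. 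Kolmogorov's criterion must then be applied with $E\setminus\{1\}$ together with the derivative-type condition. This is the delicate step, and we will handle it by the standard perturbation argument: $P-\epsilon q$ decreases $|wP|$ near $E\setminus\{1\}$. Near $1$, the value $|w(1-\epsilon q)|$ stays close to $w(1)\le\|wP\|$, and $q=O(1-x)$ with the correct sign pattern near $1$ ensures no increase.
\end{itemize}
Making this compactness/perturbation argument rigorous near $x=1$ is the main obstacle.

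\textbf{Uniqueness.} If $P,P'$ are both optimal, then $(P+P')/2$ is optimal and so has an alternation set $x_0<\dots<x_n$. At each $x_j$, the triangle inequality must be an equality, so $wP(x_j)=wP'(x_j)$. Hence $P-P'\in V_n$ vanishes at the $n+1$ points $x_j$, together with $x=1$. Whether or not $1$ is among the $x_j$, this gives at least $n+1$ zeros of a polynomial of degree at most $n$, so $P=P'$.
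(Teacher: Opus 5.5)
The gap is exactly the case $1\in E$ that you flag as ``the main obstacle''. It cannot be closed by a perturbation argument, because the lemma as stated is false there.

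If $1\in E$, then $\|wP\|_\infty=|w(1)P(1)|=w(1)$. But every admissible $p$ satisfies $\|wp\|_\infty\ge w(1)|p(1)|=w(1)$, so $P$ is already a minimizer and there is no improvement to find. Equivalently, Kolmogorov's criterion on $E$ itself holds trivially, since every $q\in V_n$ vanishes at $x=1$. For the same reason no sign choice gives $q\cdot\operatorname{sign}(wP)>0$ at $x=1$, so the alternative ``or the sign is compatible'' in your first case never occurs.

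Nothing then forces alternation or uniqueness. Take $w\equiv 1$ and $n\ge 1$. The minimal value is $1$, attained both by $p\equiv 1$ and by $p(x)=(1+x)/2$. Neither ever takes the value $-1$, so neither has the alternation set required in (2), and uniqueness in (1) fails. Your uniqueness step breaks for the same reason, since it applies the necessity half of (2) to $(P+P')/2$.

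What is missing is a hypothesis excluding this case, e.g.\ $w(1)=0$, or more generally that the minimal value of $\|wp\|_\infty$ over admissible $p$ exceeds $w(1)$. This holds in the only place the paper uses the lemma, namely $w(x)=(1-x)^{k/2}$.

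Under $w(1)=0$ you get $\|wP\|_\infty>0=|w(1)P(1)|$, so $1\notin E$ automatically. Your first case then gives $q=\pm(1-x)\prod_{i=1}^{m}(x-z_i)\in V_n$ with $q\cdot\operatorname{sign}(wP)>0$ on $E$, contradicting Kolmogorov's criterion. Existence, sufficiency and uniqueness then go through as you wrote them.

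With this correction your argument is the route the paper indicates: the Novello--Schiefermayr--Zinchenko proof, with Kolmogorov's criterion run over $V_n$ instead of $\mathcal{P}_{n-1}$. The paper's sketch also passes over the case $1\in E$. That is harmless for its application, but it means the lemma as literally stated needs the extra assumption.
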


Finally, a combination of Lemmas \ref{Lemma:ComplexKernels} and \ref{lemma:WeightedCheb} together with Fej\'er-Riesz theorem allows us to establish Theorem \ref{Thm:RelationK-2K}.

\begin{proof}[Proof of Theorem \ref{Thm:RelationK-2K}]
    Let $\mathcal{K}_{n,1}$ be the family of polynomials $p$ of degree at most $n$ with real coefficients such that $p(1)=1$, and let $\mathcal{K}_{n,2}$ be the family of those polynomials in $\mathcal{K}_{n,1}$ which are non-negative on $[-1,1]$. By applying the Fourier transform in $\mathbb{Z}$ together with Plancherel's theorem as in \eqref{Sup1Deriv}, we deduce that
    \begin{equation*}
        C_{k,n,1}^2 = \inf_{u \in \mathcal{F}_{n,1}}\|(e^{i\xi} - 1)^{2k} \widehat{u}(\xi)^2\|_{L^{\infty}(\mathbb{T})} = 2^k \inf_{p \in \mathcal{K}_{n,1}} \|(1-x)^k p(x)^2\|_{L^{\infty}([-1,1])}
    \end{equation*}
    and
    \begin{equation*}
        C_{2k,2n,2} = \inf_{u \in \mathcal{F}_{2n,2}}\|(e^{i\xi} - 1)^{2k} \widehat{u}(\xi)\|_{L^{\infty}(\mathbb{T})} =  2^k \inf_{q \in \mathcal{K}_{2n,2}} \|(1-x)^k q(x)\|_{L^{\infty}([-1,1])}.
    \end{equation*}
    Observe that $\{p^2 : p \in  \mathcal{K}_{n,1}\} \subseteq  \mathcal{K}_{2n,2}$, which implies that $C_{2k,2n,2} \leq C_{k,n,1}^2$.

    On the other hand, let $(u_j)$ be a sequence in $\mathcal{F}_{2n,2}$ such that
    \begin{equation}\label{SequenceKernels}
        \|(e^{i\xi} - 1)^{2k} \widehat{u_j}(\xi)\|_{L^{\infty}(\mathbb{T})} \to C_{2k,2n,2}
    \end{equation}
    as $j \to +\infty$. Applying Fej\'er-Riesz theorem, for each positive integer $j$ there exists a polynomial $Q_j(z) = \sum_{m=0}^{2n}b_{m}^{(j)} z^m$ with no zeros in $\mathbb{D}$ such that $\widehat{u_j}(\xi) = |Q_j(e^{i \xi})|^2$ for all $\xi \in \mathbb{T}$. Due to the normalization assumption on every $u_j$, we get that for each $j$ there exists $\varphi_j \in \mathbb{T}$ such that $Q_j(1) = e^{i \varphi_j}$. For each positive integer $j$, let us consider the kernel $v_j : \{-n,\dots,n\} \to \mathbb{C}$ given by $v_j(m) = b_{m+n}^{(j)} e^{-i\varphi_j}$ for any $m \in \{-n,\dots,n\}$. This means that $\widehat{v_j}(\xi) = e^{-i\varphi_j} e^{-in\xi}Q_j(e^{i\xi})$ for all $\xi \in \mathbb{T}$, and thus $\widehat{v_j}(0) = 1$ for any $j$. By Lemma \ref{Lemma:ComplexKernels}, we have that $\|(e^{i\xi} - 1)^{k} \widehat{v_j}(\xi)\|_{L^{\infty}(\mathbb{T})} \geq C_{k,n,1}$ for all $j$. Moreover, it follows from \eqref{SequenceKernels} that 
    \begin{equation*}
        \|(e^{i\xi} - 1)^{k} \widehat{v_j}(\xi)\|_{L^{\infty}(\mathbb{T})}^2 \to C_{2k,2n,2}.
    \end{equation*}
    Thus, $C_{2k,2n,2} \geq C_{k,n,1}^2$. This establishes that $C_{k,n,1}^2 = C_{2k,2n,2}$.

    Next, by Lemma \ref{lemma:WeightedCheb}, we know that there exists a unique polynomial $p \in \mathcal{K}_{n,1}$ such that 
    \begin{equation*}
        \|(1-x)^{k/2}p(x)\|_{L^{\infty}([-1,1])} = \inf_{q \in \mathcal{K}_{n,1}} \|(1-x)^{k/2} q(x)\|_{L^{\infty}([-1,1])}.
    \end{equation*}
    Arguing as in the proof of Theorem \ref{Thm:FourthDerivativeRest}, there exists a unique symmetric kernel $u : \{-n,\dots,n\} \to \mathbb{R}$ for which $p_u = p$ ($p_u$ is defined as in \eqref{PolV}). Since $\widehat{u}(0) = p_u(1) = p (1) = 1$, it follows that $u \in \mathcal{F}_{n,1}$. Thus,
    \begin{equation}\label{CAttained}
    \begin{aligned}
        C_{k,n,1} &= 2^{k/2}\inf_{q \in \mathcal{K}_{n,1}} \|(1-x)^{k/2} q(x)\|_{L^{\infty}([-1,1])}\\ &= 2^{k/2} \|(1-x)^{k/2}p_u(x)\|_{L^{\infty}([-1,1])}\\ &= \|(e^{i\xi}-1)^k \widehat{u}(\xi)\|_{L^{\infty}(\mathbb{T})}\\ &= \sup_{0 \neq f \in \ell^2(\mathbb{Z})} \frac{\|\nabla^{k} (u\ast f)\|_{\ell^2(\mathbb{Z})}}{\|f\|_{\ell^2(\mathbb{Z})}}.
    \end{aligned}
    \end{equation}
    Notice that if there is another kernel $v \in \mathcal{F}_{n,1}$ satisfying
    \begin{equation*}
        C_{k,n,1}=\sup_{0 \neq f \in \ell^2(\mathbb{Z})} \frac{\|\nabla^{k} (v\ast f)\|_{\ell^2(\mathbb{Z})}}{\|f\|_{\ell^2(\mathbb{Z})}},
    \end{equation*}
    then $p_v \in \mathcal{K}_{n,1}$ and
    \begin{equation*}
        \|(1-x)^{k/2}p_v(x)\|_{L^{\infty}([-1,1])} = \inf_{q \in \mathcal{K}_{n,1}} \|(1-x)^{k/2} q(x)\|_{L^{\infty}([-1,1])},
    \end{equation*}
    which implies that $p_v = p = p_u$. As a result, $u$ is the only kernel in $\mathcal{F}_{n,1}$ for which \eqref{CAttained} holds. Moreover, observe that
    \begin{equation}\label{CAttained2}
    \begin{aligned}
        C_{2k,2n,2} &= C_{k,n,1}^2\\ &= \|(e^{i\xi}-1)^{2k} \widehat{u}(\xi)^2\|_{L^{\infty}(\mathbb{T})}\\ &= \|(e^{i\xi}-1)^{2k} (u\ast u)^{\wedge}(\xi)\|_{L^{\infty}(\mathbb{T})}\\ &= \sup_{0 \neq f \in \ell^2(\mathbb{Z})} \frac{\|\nabla^{2k} ((u\ast u)\ast f)\|_{\ell^2(\mathbb{Z})}}{\|f\|_{\ell^2(\mathbb{Z})}}.
    \end{aligned}
    \end{equation}
    The oscillatory behavior of $(1-x)^{k/2}p_u$ (see Lemma \ref{lemma:WeightedCheb}) yields that $(1-x)^k p_{u \ast u} = ((1-x)^{k/2}p_u)^2$ has $2n+1$ interlaced equioscillation points in the interval $[-1,1)$. By applying an equioscillation argument similar to that of Proposition \ref{Prop:SquarePolIneq}, we deduce that $p_{u\ast u}$ is the only polynomial in $\mathcal{K}_{2n,2}$ satisfying
    \begin{equation*}
        \|(1-x)^{k}p_{u\ast u}(x)\|_{L^{\infty}([-1,1])} = \inf_{q \in \mathcal{K}_{2n,2}} \|(1-x)^{k} q(x)\|_{L^{\infty}([-1,1])}.
    \end{equation*}
    Hence, $u\ast u$ is the only kernel in $\mathcal{F}_{2n,2}$ satisfying \eqref{CAttained2}.
\end{proof}

\section{Acknowledgements}
J.M. was partially supported by the AMS Stefan Bergman Fellowship and the Simons Foundation Grant $\# 453576$. 

\bibliographystyle{acm}

\bibliography{bibl}

@book {RahmanSchmeisserBook,
    AUTHOR = {Rahman, Q. I. and Schmeisser, G.},
     TITLE = {Analytic theory of polynomials},
    SERIES = {London Mathematical Society Monographs. New Series},
    VOLUME = {26},
 PUBLISHER = {The Clarendon Press, Oxford University Press, Oxford},
      YEAR = {2002},
     PAGES = {xiv+742},
      ISBN = {0-19-853493-0},
   MRCLASS = {30C10 (00A05 11C08 12D10 30C15 31-02 41A05)},
  MRNUMBER = {1954841},
MRREVIEWER = {Bl.\ Sendov},
}

@book {GrenanderSzego1958,
    AUTHOR = {Grenander, Ulf and Szeg\"o, Gabor},
     TITLE = {Toeplitz forms and their applications},
    SERIES = {California Monographs in Mathematical Sciences},
 PUBLISHER = {University of California Press, Berkeley-Los Angeles},
      YEAR = {1958},
     PAGES = {vii+245},
   MRCLASS = {60.00 (15.00)},
  MRNUMBER = {94840},
MRREVIEWER = {M.\ Lo\`eve},
}

@article {Levedev1994,
    AUTHOR = {Lebedev, V. I.},
     TITLE = {Zolotarev polynomials and extremum problems},
   JOURNAL = {Russian J. Numer. Anal. Math. Modelling},
  FJOURNAL = {Russian Journal of Numerical Analysis and Mathematical
              Modelling},
    VOLUME = {9},
      YEAR = {1994},
    NUMBER = {3},
     PAGES = {231--263},
      ISSN = {0927-6467,1569-3988},
   MRCLASS = {41A10 (65K10)},
  MRNUMBER = {1285110},
       DOI = {10.1515/rnam.1994.9.3.231},
       URL = {https://doi-org.ezproxy.lib.vt.edu/10.1515/rnam.1994.9.3.231},
}

@article {KravitzSteinerberger2021,
    AUTHOR = {Kravitz, Noah and Steinerberger, Stefan},
     TITLE = {The smoothest average: {D}irichlet, {F}ej\'er and {C}hebyshev},
   JOURNAL = {Bull. Lond. Math. Soc.},
  FJOURNAL = {Bulletin of the London Mathematical Society},
    VOLUME = {53},
      YEAR = {2021},
    NUMBER = {6},
     PAGES = {1801--1815},
      ISSN = {0024-6093,1469-2120},
   MRCLASS = {42A85 (33C45 42A38 65D10)},
  MRNUMBER = {4386040},
MRREVIEWER = {Rita\ Correia\ Guerra},
       DOI = {10.1112/blms.12543},
       URL = {https://doi-org.ezproxy.lib.vt.edu/10.1112/blms.12543},
}

@article {Richardson2026,
    AUTHOR = {Richardson, Sean},
     TITLE = {A {S}harp {F}ourier {I}nequality and the {E}panechnikov
              {K}ernel},
   JOURNAL = {J. Fourier Anal. Appl.},
  FJOURNAL = {The Journal of Fourier Analysis and Applications},
    VOLUME = {32},
      YEAR = {2026},
    NUMBER = {1},
     PAGES = {Paper No. 14},
      ISSN = {1069-5869,1531-5851},
   MRCLASS = {42A05 (26D15 65T50)},
  MRNUMBER = {5013391},
       DOI = {10.1007/s00041-025-10215-1},
       URL = {https://doi-org.ezproxy.lib.vt.edu/10.1007/s00041-025-10215-1},
}

@article {Lax1944,
    AUTHOR = {Lax, Peter D.},
     TITLE = {Proof of a conjecture of {P}. {E}rd\"os on the derivative of a
              polynomial},
   JOURNAL = {Bull. Amer. Math. Soc.},
  FJOURNAL = {Bulletin of the American Mathematical Society},
    VOLUME = {50},
      YEAR = {1944},
     PAGES = {509--513},
      ISSN = {0002-9904},
   MRCLASS = {41.1X},
  MRNUMBER = {10731},
MRREVIEWER = {A.\ C.\ Offord},
       DOI = {10.1090/S0002-9904-1944-08177-9},
       URL = {https://doi-org.ezproxy.lib.vt.edu/10.1090/S0002-9904-1944-08177-9},
}

@article {Steinerberger2021,
    AUTHOR = {Steinerberger, Stefan},
     TITLE = {Fourier uncertainty principles, scale space theory and the
              smoothest average},
   JOURNAL = {Math. Res. Lett.},
  FJOURNAL = {Mathematical Research Letters},
    VOLUME = {28},
      YEAR = {2021},
    NUMBER = {6},
     PAGES = {1851--1874},
      ISSN = {1073-2780,1945-001X},
   MRCLASS = {42A38 (26A33 33C05 33C10 49J35 49S05)},
  MRNUMBER = {4477676},
MRREVIEWER = {Waseem\ Z.\ Lone},
       DOI = {10.4310/mrl.2021.v28.n6.a9},
       URL = {https://doi-org.ezproxy.lib.vt.edu/10.4310/mrl.2021.v28.n6.a9},
}

@article {ErdosSzego1942,
    AUTHOR = {Erd\"os, P. and Szeg\"o, G.},
     TITLE = {On a problem of {I}. {S}chur},
   JOURNAL = {Ann. of Math. (2)},
  FJOURNAL = {Annals of Mathematics. Second Series},
    VOLUME = {43},
      YEAR = {1942},
     PAGES = {451--470},
      ISSN = {0003-486X},
   MRCLASS = {41.1X},
  MRNUMBER = {6783},
MRREVIEWER = {A.\ C.\ Schaeffer},
       DOI = {10.2307/1968803},
       URL = {https://doi-org.ezproxy.lib.vt.edu/10.2307/1968803},
}

@misc{Bernstein1912,
 author = {Bernstein, S.},
 title = {Sur l'ordre de la meilleure approximation des fonctions continues par des polynomes de degr{\'e} donn{\'e}.},
 year = {1912},
 language = {French},
 howpublished = {Belg. {Mem}. in {{\(4^\circ\)}} (2) 4, 104 {S}.},
 zbMATH = {2616863},
 JFM = {45.0633.03}
}

@misc{NIST:DLMF,
         key = "{\relax DLMF}",
       title = "{\it NIST Digital Library of Mathematical Functions}",
howpublished = "\url{https://dlmf.nist.gov/}, Release 1.2.6 of 2026-03-15",
         url = "https://dlmf.nist.gov/",
        note = "F.~W.~J. Olver, A.~B. {Olde Daalhuis}, D.~W. Lozier, B.~I. Schneider,
                R.~F. Boisvert, C.~W. Clark, B.~R. Miller, B.~V. Saunders,
                H.~S. Cohl, and M.~A. McClain, eds."}

@article {BojanovNaidenov2010,
    AUTHOR = {Bojanov, Borislav and Naidenov, Nikola},
     TITLE = {On oscillating polynomials},
   JOURNAL = {J. Approx. Theory},
  FJOURNAL = {Journal of Approximation Theory},
    VOLUME = {162},
      YEAR = {2010},
    NUMBER = {10},
     PAGES = {1766--1787},
      ISSN = {0021-9045,1096-0430},
   MRCLASS = {33C45 (30C10 41A10 41A17 42C05)},
  MRNUMBER = {2728045},
MRREVIEWER = {Yamilet\ Quintana},
       DOI = {10.1016/j.jat.2010.04.009},
       URL = {https://doi-org.ezproxy.lib.vt.edu/10.1016/j.jat.2010.04.009},
}

@article {Peherstorfer2009,
    AUTHOR = {Peherstorfer, Franz},
     TITLE = {Extremal problems of {C}hebyshev type},
   JOURNAL = {Proc. Amer. Math. Soc.},
  FJOURNAL = {Proceedings of the American Mathematical Society},
    VOLUME = {137},
      YEAR = {2009},
    NUMBER = {7},
     PAGES = {2351--2361},
      ISSN = {0002-9939,1088-6826},
   MRCLASS = {41A29 (33C45 41A60)},
  MRNUMBER = {2495269},
MRREVIEWER = {Andrei\ Mart\'inez Finkelshtein},
       DOI = {10.1090/S0002-9939-09-09771-8},
       URL = {https://doi-org.ezproxy.lib.vt.edu/10.1090/S0002-9939-09-09771-8},
}

@article {Peherstorfer1991,
    AUTHOR = {Peherstorfer, Franz},
     TITLE = {On the connection of {P}osse's {$L_1$}- and {Z}olotarev's
              maximum-norm problem},
   JOURNAL = {J. Approx. Theory},
  FJOURNAL = {Journal of Approximation Theory},
    VOLUME = {66},
      YEAR = {1991},
    NUMBER = {3},
     PAGES = {288--301},
      ISSN = {0021-9045,1096-0430},
   MRCLASS = {41A10},
  MRNUMBER = {1122290},
MRREVIEWER = {D.\ C.\ Gilles},
       DOI = {10.1016/0021-9045(91)90032-6},
       URL = {https://doi-org.ezproxy.lib.vt.edu/10.1016/0021-9045(91)90032-6},
}

@incollection {NovelloSchiefermayrZinchenko2021,
    AUTHOR = {Novello, Galen and Schiefermayr, Klaus and Zinchenko, Maxim},
     TITLE = {Weighted {C}hebyshev polynomials on compact subsets of the
              complex plane},
 BOOKTITLE = {From operator theory to orthogonal polynomials, combinatorics,
              and number theory---a volume in honor of {L}ance
              {L}ittlejohn's 70th birthday},
    SERIES = {Oper. Theory Adv. Appl.},
    VOLUME = {285},
     PAGES = {357--370},
 PUBLISHER = {Birkh\"auser/Springer, Cham},
      YEAR = {[2021] \copyright 2021},
      ISBN = {978-3-030-75424-2; 978-3-030-75425-9},
   MRCLASS = {41A50 (30C10 30E10)},
  MRNUMBER = {4367475},
MRREVIEWER = {Brian\ Simanek},
}

\end{document}